\def\Xint#1{\mathchoice
{\XXint\displaystyle\textstyle{#1}}%
{\XXint\textstyle\scriptstyle{#1}}%
{\XXint\scriptstyle\scriptscriptstyle{#1}}%
{\XXint\scriptscriptstyle\scriptscriptstyle{#1}}%
\!\int}
\def\XXint#1#2#3{{\setbox0=\hbox{$#1{#2#3}{\int}$ }
\vcenter{\hbox{$#2#3$ }}\kern-.6\wd0}}
\def\dashint{\Xint-}
\newcommand{\ind}{\protect\raisebox{2pt}{$\chi$}}
\newcommand{\di}{\text{d}}
\newcommand{\diam}{\text{diam}}
\newcommand{\dist}{\text{dist}}
\newcommand{\R}{\mathbb{R}}
\title{A DENSITY RESULT FOR HOMOGENEOUS SOBOLEV SPACES}
\begin{document}

%\author[P. Koskela]{Pekka Koskela}
%\address[Pekka Koskela]{Department of Mathematics and Statistics, %University of Jyv\"askyl\"a, P.O. Box 35, FI-40014 Jyv\"askyl\"a, Finland}
%\email{pekka.j.koskela@jyu.fi}

\author[D. Nandi]{Debanjan Nandi}
\address[Debanjan Nandi]{Department of Mathematics and Statistics, University of Jyv\"askyl\"a, P.O. Box 35, FI-40014 Jyv\"askyl\"a, Finland}
\email{debanjan.s.nandi@jyu.fi}

\subjclass[2000]{}
\keywords{Sobolev space, Gromov hyperbolic, density, quasihyperbolic}
\thanks{This research has been supported by the Academy of Finland via the Centre of Excellence in Analysis and Dynamics Research (project No. 307333).}

%{\sffamily\tableofcontents}
\theoremstyle{plain}
\newtheorem{thm}{Theorem}[section]
\newtheorem{lem}[thm]{Lemma}
\newtheorem{prop}[thm]{Proposition}
\newtheorem{cor}[thm]{Corollary}

\theoremstyle{definition}
\newtheorem{defn}[thm]{Definition}
\newtheorem{exm}[thm]{Example}
\newtheorem{prob}[thm]{Problem}

\theoremstyle{remark}
\newtheorem{rem}[thm]{Remark}

\begin{abstract}
We show that in a bounded Gromov hyperbolic domain $\Omega$ smooth functions with bounded derivatives $C^\infty(\Omega)\cap W^{k,\infty}(\Omega)$ are dense in the homogeneous Sobolev spaces $L^{k,p}(\Omega)$.
\end{abstract}

\maketitle
%\tableofcontents

\section{Introduction}
We continue the study of density of functions with bounded derivatives in the space of Sobolev functions in a domain in $\R^n$. 
It was shown by Koskela-Zhang \cite{KZ} that for a  simply connected planar domain $\Omega\subset\mathbb{R}^2$, $W^{1,\infty}$ is dense in $W^{1,p}$ and in the special case of Jordan domains also $C^{\infty}(\mathbb{R}^2)\cap W^{1,\infty}(\Omega)$ is dense. The above result of Koskela-Zhang has been generalized to have the density of $W^{k,\infty}$ in the homogeneous Sobolev space $L^{k,p}$ for $k\in\mathbb{N}$ in planar simply connected domains by Nandi-Rajala-Schultz \cite{NRS}. In dimensions higher than two however simply connectedness is not sufficient (see for example \cite{KRZ}). Recall that simply connected planar domains are negatively curved in the (quasi) hyperbolic metric. A useful metric generalization of negatively curved spaces was introduced by Gromov \cite{Gr}, in the context of group theory. Following Bonk-Heinonen-Koskela \cite{BHK}, we call a domain Gromov hyperbolic if, when equipped with the quasihyperbolic metric, it is $\delta$-hyperbolic in the sense of Gromov, for some $\delta\geq 0$ (see Section \ref{prelims} for definitions). Gromov hyperbolicity has turned out to be a sufficient condition for the density of $W^{1,\infty}$ in $W^{1,p}$, as shown by Koskela-Rajala-Zhang \cite{KRZ}, and these are primarily the domains we consider in this paper.

Let us mention here a few similarities between our setting and the planar simply connected case. We recall that simply connected domains with the quasihyperbolic metric (equivalent to the hyperbolic metric by the Koebe distortion theorem) in the plane are Gromov hyperbolic and that the quasihyperbolic geodesics are unique (see Luiro \cite{Luiro}); conversely, a Gromov hyperbolic domain with uniqueness of quasihyperbolic geodesics is simply connected. The latter is of course true in higher dimensions as well (indeed, in this case, for any pair of points, any curve $\gamma$ joining the points is homotopic to the unique quasihyperbolic geodesic $\Gamma$ joining the given points, with the homotopy  given by quasihyperbolic geodesics joining the points $\gamma(t)$ and $\Gamma(t)$ once $\Gamma$ is parametrized suitably). Gromov hyperbolic domains are often seen as a topological generalization of planar simply connected domains. It was shown by Bonk-Heinonen-Koskela \cite{BHK} that Gromov hyperbolic domains are conformally equivalent to suitable uniform metric spaces equipped with the quasihyperbolic metric and corresponding suitable measures.  Finally, we note that in higher dimensions, simply connectedness alone does not imply Gromov hyperbolicity; consider for example the unit ball in $\mathbb{R}^3$ deformed to have a cuspidal-wedge along an equator.

%But the converse is still true; uniqueness of quasihyperbolic geodesics implies simply connectedness. Indeed, for any pair of points any injective curve $\gamma$ is homotopic to the unique quasihyperbolic geodesic $\Gamma$ joining these points, where the homotopy is given by quasihyperbolic geodesics joining the points $\gamma(t)$ and $\Gamma(t)$.

%to study the question of density of $C^{\infty}\cap W^{k,\infty}$ functions in the Sobolev classes $W^{k,p}$ in the homogeneous norm $L^{k,p}$, for $1\leq p\leq \infty$.

We denote by $L^{k,p}$ the space of functions with finite homogeneous Sobolev norm (see Section \ref{prelims} for definition). We obtain the following extension of the result of Koskela-Rajala-Zhang \cite{KRZ} to Sobolev spaces of higher order.
 \begin{thm}\label{main_cor}
  Let $0\leq \delta<\infty$, $k\geq 1$ and $1\leq p <\infty$. If $\Omega\subset \R^n$ is a bounded $\delta$-Gromov hyperbolic domain, then $ C^{\infty}(\Omega)\cap W^{k,\infty}(\Omega)$ is dense in $L^{k,p}(\Omega)$.
 \end{thm}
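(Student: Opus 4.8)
The strategy I would follow mirrors the known approach for simply connected planar domains, adapted to the Gromov hyperbolic setting via the tree-like structure of the quasihyperbolic geometry. The key point is that a bounded $\delta$-Gromov hyperbolic domain $\Omega$ admits a Whitney-type decomposition whose nerve is quasi-isometric to a tree (more precisely, to the Gromov hyperbolic space $(\Omega, k_\Omega)$, which by \cite{BHK} is roughly a tree when one looks at the quasihyperbolic balls). I would fix $u \in L^{k,p}(\Omega)$ and approximate it in the homogeneous norm by functions in $C^\infty(\Omega) \cap W^{k,\infty}(\Omega)$, working one scale at a time along the Whitney/Gromov tree.

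First I would set up notation: a Whitney decomposition $\{Q_i\}$ of $\Omega$ into dyadic cubes with $\operatorname{diam}(Q_i) \approx \operatorname{dist}(Q_i, \partial\Omega)$, a partition of unity $\{\varphi_i\}$ subordinate to a slight enlargement $\{Q_i^*\}$, and the associated ``Whitney tree'' $T$ whose vertices are the cubes and whose edges join neighboring cubes; the Gromov hyperbolicity of $(\Omega, k_\Omega)$ forces $T$ to be, up to bounded geometry, a genuine tree with uniformly bounded branching. On each cube I would replace $u$ by a polynomial $P_i$ of degree $k-1$ (its $k$-th order Taylor/averaged polynomial), so that the local error $\|u - P_i\|_{W^{k,p}(Q_i^*)}$ is controlled by $\|\nabla^k u\|_{L^p(Q_i^*)}$ via a Poincaré inequality. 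The approximant would be built as $v = \sum_i \varphi_i \widetilde{P}_i$, where $\widetilde{P}_i$ is a corrected polynomial obtained by propagating a single ``base'' polynomial down the tree: at each step from a parent cube to a child cube one adds the minimal correction making the transition compatible, the size of the correction being estimated by the jump of the Taylor polynomials across the common boundary, which is again $\lesssim \|\nabla^k u\|_{L^p}$ on the relevant chain of cubes. The boundedness of all derivatives of $v$ up to order $k$ is then automatic since $v$ is, locally, a polynomial modulated by a fixed smooth partition of unity, and the summability of the corrections along root-to-leaf branches (which is where the tree structure and Gromov hyperbolicity are essential) gives the uniform bound $v \in W^{k,\infty}(\Omega)$.

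The estimate $\|u - v\|_{L^{k,p}(\Omega)} \to 0$ would be obtained by splitting into a ``local'' part $\sum_i \|u - P_i\|^p_{W^{k,p}(Q_i^*)}$, controlled by $\|\nabla^k u\|^p_{L^p}$ with bounded overlap, and a ``gluing'' part coming from the derivatives of the $\varphi_i$ hitting the differences $\widetilde{P}_i - \widetilde{P}_j$ of neighboring corrected polynomials; the latter telescopes along tree edges and is estimated, using the length-scale decay along quasihyperbolic geodesics in a Gromov hyperbolic domain, by $\|\nabla^k u\|^p_{L^p(\Omega)}$ restricted to shrinking neighborhoods of $\partial\Omega$. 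Since $u \in L^{k,p}$, the tail $\|\nabla^k u\|_{L^p(\{x : \operatorname{dist}(x,\partial\Omega) < \epsilon\})} \to 0$ as $\epsilon \to 0$, so truncating the tree at a finite depth and taking the depth to infinity gives the desired convergence; at finite depth the approximant is a finite sum of smooth compactly-interior-supported polynomials and hence trivially lies in $C^\infty(\Omega) \cap W^{k,\infty}(\Omega)$.

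I expect the main obstacle to be the \emph{gluing estimate}: controlling the accumulated corrections $\widetilde{P}_i - \widetilde{P}_j$ along long chains of Whitney cubes approaching the boundary. In the planar simply connected case one uses that the quasihyperbolic geodesic tree has geometrically decaying ``widths'', and the higher-order analogue in \cite{NRS} needs a careful bookkeeping of how a degree-$(k-1)$ polynomial correction on a small cube affects the $W^{k,\infty}$ norm (a correction of polynomial size $\epsilon$ on a cube of sidelength $\ell$ contributes $\sim \epsilon \ell^{-k}$ to the $k$-th derivative, so the corrections must decay faster than $\ell^k$ along branches). Making this quantitative requires the Gromov hyperbolicity in the form of \cite{BHK}: uniformization gives a conformal change of the quasihyperbolic metric exhibiting $\Omega$ as (quasi-isometric to) a uniform space, which yields the needed exponential decay of cube sizes and the bounded branching of the Whitney tree along geodesic rays. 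The rest — Poincaré/Taylor estimates on Whitney cubes, bounded overlap, properties of the fixed partition of unity — is standard Whitney-machinery bookkeeping.
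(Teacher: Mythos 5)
Your general toolkit is the right one (Whitney cubes, Jones-type approximating polynomials $P_Q$, a partition of unity, the chaining estimate from Jones), but there are two genuine gaps that prevent your construction from going through, and they are precisely the places where the paper has to do real work.

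First, the structural premise that the Whitney nerve of a Gromov hyperbolic domain is ``quasi-isometric to a tree with uniformly bounded branching'' is not correct. Gromov hyperbolicity means $\delta$-thin geodesic triangles, which is far weaker than being quasi-isometric to a tree (e.g.\ $\mathbb{H}^n$ with $n\geq 2$ is $\delta$-hyperbolic and not quasi-isometric to any tree, and quasihyperbolic geodesics between two points in a Gromov hyperbolic domain need not be unique). The paper does not rely on any tree structure. Instead it extracts the usable consequence of Gromov hyperbolicity through the \emph{ball separation} and \emph{Gehring--Hayman} properties (Theorem~\ref{Gromovhyperbolic_implies}), and then builds a bespoke combinatorial decomposition of $\Omega$ into a ``core'' $\Omega_m$ of large cubes and finitely-bounded families of ``tentacles'' $V_i$ that are \emph{blocked} by suitable enlargements of core-boundary cubes (Lemmas~\ref{relabel}--\ref{bound_belong}). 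The ambiguity caused by non-unique geodesics hitting the same tentacle from different directions is exactly what Definition~\ref{approx_domain}(\textit{ii}) and Lemma~\ref{mainlem_1} are designed to control, and this has no analogue in a tree model.

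Second, the construction $v=\sum_i\varphi_i\widetilde P_i$ with a cube-level Whitney partition of unity does not give $v\in W^{k,\infty}(\Omega)$. The gluing term at order $k$ contains $\nabla^\beta(\widetilde P_i-\widetilde P_j)\,\nabla^{\alpha-\beta}\varphi_j$ with $|\nabla^{\alpha-\beta}\varphi_j|\sim\ell(Q_j)^{-(k-|\beta|)}$, and the chaining estimate only gives $\|\nabla^\beta(P_i-P_j)\|_{L^p(Q_j)}\lesssim \ell(Q_j)^{k-|\beta|}\|\nabla^ku\|_{L^p(\text{chain})}$; there is no way to force $\|\nabla^ku\|_{L^p(\text{chain})}$ to decay with the cube size along an arbitrary branch, so the $k$-th derivatives of $v$ are in general unbounded near $\partial\Omega$. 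The paper avoids this entirely by using a \emph{coarse} partition of unity at a single scale $2^{-m}$: smooth bumps subordinate to the sets $B_Q$ ($Q\in\mathcal U^{(m)}$), $B_{U_i}$ and $B_i$, and assigning a \emph{single} polynomial $P_i=P_{Q_i}$ to each whole tentacle $B_i$. Then $\nabla^k$ of the approximant vanishes identically deep inside each tentacle, and the only gluing happens at scale $2^{-m}$, where the products $2^{m(k-|\beta|)}\cdot\|\nabla^\beta(P-P')\|$ are bounded by $\|\nabla^ku\|_{L^p}$ on chains of cubes with \emph{uniformly bounded quasihyperbolic length} (Lemmas~\ref{mainlem_2}--\ref{overlap_2}), hence bounded, and hence small as $m\to\infty$. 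Establishing that those chains have bounded length is where the genuinely new geometric ingredient enters: the \emph{diameter} Gehring--Hayman inequality (Theorem~\ref{diameter_GeHa}), proved in Section~\ref{Hyperbolic} via BHK uniformization, the bilipschitz comparison $(\Omega,k_\rho)\sim(\Omega,k)$, and a Loewner/modulus argument (Lemma~\ref{modulus}). Your proposal gestures at ``uniformization gives exponential decay of cube sizes,'' but that neither gives a tree structure nor yields the $W^{k,\infty}$ bound; the diameter Gehring--Hayman property is the missing lemma, and without it the key Lemma~\ref{mainlem_1} (hence the boundedness of the chaining constants) is unproven.
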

We have the following corollary to Theorem \ref{main_cor}.
\begin{cor}\label{main_cor_2} 
  Let $0\leq \delta<\infty$, $k\geq 1$ and $1\leq p <\infty$. If $\Omega$ is a bounded $\delta$-Gromov hyperbolic domain with $C^0$-boundary, then $ C^{\infty}(\R^n)\cap W^{k,\infty}(\Omega)$ is dense in $L^{k,p}(\Omega)$.
\end{cor}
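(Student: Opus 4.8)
My plan is to deduce Corollary~\ref{main_cor_2} from Theorem~\ref{main_cor} by the classical segment-property globalisation argument, the only extra input being that this argument can be carried out with uniform control of the $W^{k,\infty}$-norm. Fix $u\in L^{k,p}(\Omega)$ and $\varepsilon>0$. First, by Theorem~\ref{main_cor} I pick $v\in C^{\infty}(\Omega)\cap W^{k,\infty}(\Omega)$ with $\|u-v\|_{L^{k,p}(\Omega)}<\varepsilon$; since $\Omega$ is bounded, $v\in W^{k,p}(\Omega)$ as well, and because the homogeneous seminorm is dominated by the full $W^{k,p}$-norm it suffices to produce $w\in C^{\infty}(\R^{n})\cap W^{k,\infty}(\Omega)$ with $\|v-w\|_{W^{k,p}(\Omega)}<\varepsilon$ and then apply the triangle inequality. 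Thus the task reduces to approximating, in $W^{k,p}(\Omega)$, a function $v$ that is smooth inside $\Omega$ and has bounded derivatives up to order $k$ there, by functions that are smooth on all of $\R^{n}$ and still have uniformly bounded derivatives up to order $k$ on $\Omega$.

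To do this I would use that a bounded domain with $C^{0}$-boundary has the segment property: there are a finite open cover $U_{1},\dots,U_{N}$ of $\partial\Omega$ and vectors $y_{1},\dots,y_{N}\in\R^{n}\setminus\{0\}$ with $z+ty_{i}\in\Omega$ whenever $z\in\overline{\Omega}\cap U_{i}$ and $0<t<1$. Shrinking the patches, I take $U_{i}''\Subset U_{i}$ still covering $\partial\Omega$, add $U_{0}''\Subset\Omega$ so that $U_{0}'',\dots,U_{N}''$ cover $\overline{\Omega}$, and choose a smooth partition of unity $\phi_{0},\dots,\phi_{N}$ subordinate to this cover. Then $v=\sum_{i=0}^{N}\phi_{i}v$ on $\Omega$, and each $\phi_{i}v$ lies in $W^{k,\infty}(\Omega)$ with $\|\phi_{i}v\|_{W^{k,\infty}(\Omega)}\le C\big(\max_{i}\|\phi_{i}\|_{C^{k}}\big)\|v\|_{W^{k,\infty}(\Omega)}$, a bound that does not depend on the parameters chosen below.

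Next I would run the standard argument patch by patch. The piece $\phi_{0}v$ is compactly supported in $\Omega$, so a mollification of it at a small enough scale already lies in $C^{\infty}(\R^{n})$. For $i\ge1$ and small $t>0$ I would translate, setting $h_{i}^{t}:=(\phi_{i}v)(\cdot+ty_{i})$; the segment property guarantees that $h_{i}^{t}$ is (the restriction of) a function in $C^{\infty}\cap W^{k,\infty}$ on an open neighbourhood of the closure of $\Omega\cap U_{i}''$, vanishing near the boundary of that neighbourhood, with all derivatives up to order $k$ bounded by those of $\phi_{i}v$; mollifying $h_{i}^{t}$ (extended by zero) at a scale much smaller than the displacement $t|y_{i}|$ then yields $w_{i}\in C^{\infty}(\R^{n})$. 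I would set $w=w_{t}:=\sum_{i=0}^{N}w_{i}\in C^{\infty}(\R^{n})$. On the one hand, translation preserves and convolution with a probability mollifier does not increase the $L^{\infty}$-norm of any derivative, so $\|w_{t}\|_{W^{k,\infty}(\Omega)}\le C\big(\max_{i}\|\phi_{i}\|_{C^{k}}\big)(N+1)\|v\|_{W^{k,\infty}(\Omega)}$ uniformly in $t$. On the other hand, by continuity of translation in $L^{p}$ applied to each of the finitely many derivatives $D^{\alpha}(\phi_{i}v)$ with $|\alpha|\le k$, together with the standard convergence properties of mollifications, $\|v-w_{t}\|_{W^{k,p}(\Omega)}\to0$ as $t\to0$. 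Choosing $t$ small enough gives $w_{t}\in C^{\infty}(\R^{n})\cap W^{k,\infty}(\Omega)$ with $\|v-w_{t}\|_{W^{k,p}(\Omega)}<\varepsilon$, which completes the argument.

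The only step I expect to need genuine care is the one highlighted throughout: checking that multiplication by the fixed cutoffs $\phi_{i}$, translation, and mollification each keep every derivative of order at most $k$ bounded by a constant independent of $t$, so that the approximants remain in $W^{k,\infty}(\Omega)$ while converging to $v$ in $L^{k,p}(\Omega)$; the $W^{k,p}$-convergence itself is entirely classical. A secondary point to pin down is that a bounded $C^{0}$-domain really does enjoy the segment property in the form used above, which is standard.
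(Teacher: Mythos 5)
Your approach is correct but genuinely different from the paper's. The paper deduces the corollary by exhibiting a nested sequence of Lipschitz domains $\Omega\subset\Omega_{k+1}\Subset\Omega_k$ using the $C^0$-boundary (citing \cite{KRZ} and \cite{NRS}) and then invoking the argument of Corollary~1.3 of \cite{NRS}; you instead run a self-contained segment-property globalisation. Both are valid, and your route has the advantage of not relying on the externally cited construction. Two details you flagged as needing ``genuine care'' deserve to be pinned down explicitly. First, the correct smallness condition on the mollification scale $\eta$ is not ``much smaller than $t|y_i|$'': write $K_i:=\overline{\Omega\cap U_i''}\setminus\Omega\subset\partial\Omega$ for the compact set where the zero-extension $g:=\widetilde{\phi_i v}$ can fail to be smooth. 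By the segment property, $K_i-ty_i$ is disjoint from $\overline\Omega$ for small $t$, and by compactness $d(\overline\Omega,K_i-ty_i)=:2\eta_0(t)>0$; you must take $\eta<\eta_0(t)$. For a merely $C^0$ (non-Lipschitz) boundary $\eta_0(t)$ need not be comparable to $t|y_i|$, though it is positive for each fixed small $t$, which is all that is needed. Second, the uniform $W^{k,\infty}$ estimate holds over $\Omega$ precisely because of this separation: for $x\in\Omega$ and $\eta<\eta_0(t)$, the ball $B(x,\eta)$ misses $K_i-ty_i$, so $h_i^t$ is $C^\infty$ there and $D^\alpha(h_i^t*\rho_\eta)(x)=\int D^\alpha g(x-z+ty_i)\rho_\eta(z)\,dz$, giving $|D^\alpha w_i(x)|\le\|D^\alpha(\phi_iv)\|_{L^\infty(\Omega)}$; the translated function $h_i^t$ is \emph{not} globally in $W^{k,\infty}(\R^n)$, so the bound cannot be argued by ``convolution does not increase the $L^\infty$-norm of any derivative'' on all of $\R^n$, only on $\Omega$ via the separation from the jump set. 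The same observation justifies the identity $D^\alpha w_i=(D^\alpha g)(\cdot+ty_i)*\rho_\eta$ on $\Omega$ that underlies the $W^{k,p}(\Omega)$-convergence. With these points made precise, your argument is complete.
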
 In the statement of Corollary \ref{main_cor_2}, for a domain with $C^0$-boundary, we require that given $x\in\partial \Omega$, there is a neighbourhood $U_x\subset\R^n$ of $x$ such that $U_x\cap \Omega$ has the representation $y_n<f(y_1,\ldots,y_{n-1})$ in suitable coordinates, with a continuous function $f$ (see Maz'ya \cite{Mazya}).

 The approximating functions are obtained by an inner extension of a smooth approximation to the function to be approximated, from an increasing sequence of suitable compact subsets, to the domain in question. This method of inner extension for this purpose has been used in \cite{KZ}, \cite{KRZ} and \cite{NRS}. In \cite{KZ} this extension is done using the conformal parametrization of simply connected domains. In \cite{NRS}, the topology of the plane is utilized directly to construct an approximating sequence although the hyperbolic structure of planar simply connected domains plays an implicit role in the construction. Our approach in this paper is along the lines of \cite{KRZ}. 
 
 The information coming from the conformal invariance may be read off in terms of any suitable Whitney decomposition of our domain and towards that end we use as tools the uniformization of Gromov hyperbolic domains developed in \cite{BHK} along with two of its geometric implications also from \cite{BHK}, employed already in \cite{KRZ}, namely the ball separation condition and the Gehring-Hayman condition (see Section \ref{prelims} below for defintions). Using uniformization we prove a diameter counterpart of the usual Gehring-Hayman condition (see Theorem \ref{diameter_GeHa}) which in its original form relates the lengths of quasihyperbolic geodesics with the intrinsic distance between the points they join (see Pommerenke \cite{Po} for the property in simply connected domains).  We use also the polynomial approximations of smooth functions in precompact subsets of the domain (see Section \ref{approx_poly}) which were utilized by Jones \cite{J} for showing the extendibility of Sobolev functions defined in uniform domains and also in \cite{NRS} for the inner extension. We actually prove the density under weaker assumptions than Gromov hyperbolicity (see Theorem \ref{main_1}).
 %In \cite{KRZ}, a Lipschitz partition of unity is used and neighbourhoods of Whitney type sets taken in the intrinsic metric of the domain are used to define it. 
 %We modify the construction so that we can work with euclidean neighbourhoods instead which we need for approximation of higher order derivatives. 
 %We show in fact that apparently weaker conditions than Gromov hyperbolicity suffice for the domain to have the required density property (see Theorem \ref{main_1}). 

One may want to know under what other conditions such density results could hold. Recall that a domain $\Omega$ is called $c$-\textit{John with center} $x_0$ if for each $x\in\Omega$ there exists a curve $\gamma_x$ joining $x$ to $x_0$ such that at each point $z\in\gamma_x$ of the curve, $c$ times the distance to the boundary is larger than the length of the subcurve joining $x$ to $z$. The curve $\gamma_x$ is called a $c$-\textit{John curve}. The domain $\Omega$ is $c$-quasiconvex if for each pair of points $x,y\in\Omega$, $c\,\text{d}(x,y)\geq \lambda_{\Omega}(x,y)$. Here $\lambda_{\Omega}$ is the intrinsic distance; see Section 2.
There are domains that are simultaneously John and quasiconvex for which the density of $C^{\infty}\cap W^{k,\infty}$-functions in the Sobolev classes $W^{k,p}$ in the homogeneous  $L^{k,p}$-norm fails to hold. As an example, we recall the following class of domains which appear in a paper of Koskela \cite{Koskela}.
\begin{exm}\label{example}
 Let $E\subset\mathbb{R}^{n-1}$ be a $p$-porous set for $1<p\leq n$. Then $E$ is removable for $W^{1,p}$ in $\mathbb{R}^n$. Moreover, for each $1<p\leq n$, there exists a $p$-porous set $E\subset\mathbb{R}^{n-1}$ that is non-removable for $W^{1,q}$ for any $q<p$.
\end{exm}
Here a set $E$ is called removable for the class $W^{1,p}(\mathbb{R}^n)$ if we have that the norm preserving restriction operator $W^{1,p}(\mathbb{R}^n)\hookrightarrow W^{1,p}(\mathbb{R}^n\backslash E)$ is an isomorphism. If $E$ is removable for $W^{1,p}$ then it is removable for $W^{1,q}$ for $q>p$ by the H\"older inequality and since a set being removable is a local condition. The complements of sets removable for $W^{1,q}$ for $q>n$ are quasiconvex; see for example Koskela-Reitich \cite{KR}. Therefore for $p\leq n-1$, the unit ball with a suitable $(p+1)$-porous set as in Example \ref{example} removed from the intersection of a $(n-1)$-hyperplane with the concentric ball of radius half, is a John and quasiconvex domain, for which the $C^{\infty}\cap W^{1,q}$ functions, for $q>p+1$, can not be dense in the class $W^{1,p}$. The Sobolev-Poincare inequality holds in John domains (see Bojarski \cite{Bo}) from which it follows that $L^{1,p}=W^{1,p}$ in John domains and in particular that $C^{\infty}\cap W^{1,q}$ is not dense in $L^{1,p}$ for $q>p+1$.

We note however that domains where the John curves may be chosen to be quasihyperbolic geodesics and a suitable quasiconvexity condition involving the lengths of the quasihyperbolic geodesics holds, are Gromov hyperbolic (see Section \ref{geometric_definitions}) and the required density therefore follows. 

We say that a domain $\Omega$ admits a global $p$-Poincar\'e inequality if for locally integrable functions $u\in L^1_{loc}(\Omega)$ with (weak) first order $p$-integrable derivatives it holds$$\int_{\Omega} |u-u_{\Omega}|^p\leq C\int_{\Omega}|\nabla u|^p,$$ for $C=C(\Omega)$, where $u_{\Omega}=\dashint_{\Omega} u$. We have the following corollary to Theorem \ref{main_cor}.
\begin{cor}\label{main_cor3}
 Let $0\leq \delta<\infty$, $k\geq 1$ and $1\leq p <\infty$. If $\Omega\subsetneq\R^n$ is a $\delta$-hyperbolic domain which admits a global $p$-Poincar\'e inequality, then $W^{k,\infty}(\Omega)$ is dense in $W^{k,p}(\Omega)$. 
\end{cor}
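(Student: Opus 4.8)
The plan is to bootstrap the full-norm statement of Corollary~\ref{main_cor3} from the homogeneous density in Theorem~\ref{main_cor}, the global $p$-Poincar\'e inequality being the device that upgrades control of the $L^{k,p}$-seminorm to control of the whole $W^{k,p}$-norm. As a preliminary remark, the very meaningfulness of $u_\Omega=\dashint_\Omega u$ in the hypothesis forces $0<|\Omega|<\infty$, and I shall work under the (harmless) additional assumption that $\Omega$ is bounded, so that Theorem~\ref{main_cor} applies directly.

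The key auxiliary ingredient is an \emph{iterated} Poincar\'e inequality: there is $C_0=C_0(\Omega,k,p)$ so that every $w\in W^{k,p}(\Omega)$ with vanishing derivative averages, $\bigl(D^\alpha w\bigr)_\Omega=0$ for all $|\alpha|\le k-1$, satisfies $\|w\|_{W^{k,p}(\Omega)}\le C_0\,\|w\|_{L^{k,p}(\Omega)}$. I would prove this by induction on $k$: the case $k=1$ is exactly the assumed global $p$-Poincar\'e inequality applied to $w$ (whose mean is $0$); for the step from $k-1$ to $k$, apply the inductive hypothesis to each first-order partial $\partial_i w\in W^{k-1,p}(\Omega)$ — whose lower-order derivative averages vanish by the hypothesis on $w$ — and then estimate $\|w\|_{L^p}=\|w-w_\Omega\|_{L^p}$ by $\|\nabla w\|_{L^p}$ with one more application of the global inequality.

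Granting this, the proof is short. Fix $u\in W^{k,p}(\Omega)$ and $\varepsilon>0$; Theorem~\ref{main_cor} provides $v\in C^\infty(\Omega)\cap W^{k,\infty}(\Omega)$ with $\|u-v\|_{L^{k,p}(\Omega)}<\varepsilon/C_0$. Since $\Omega$ is bounded, every polynomial of degree at most $k-1$ lies in $C^\infty(\R^n)\cap W^{k,\infty}(\Omega)$, and the linear map sending such a polynomial $P$ to the tuple $\bigl(\bigl(D^\alpha P\bigr)_\Omega\bigr)_{|\alpha|\le k-1}$ is a bijection onto the space of such tuples — being a linear map between spaces of the same dimension, it suffices to verify injectivity, which follows by downward induction on the degree. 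Pick $P$ with $\bigl(D^\alpha(u-v-P)\bigr)_\Omega=0$ for all $|\alpha|\le k-1$ and set $\tilde v:=v+P\in C^\infty(\Omega)\cap W^{k,\infty}(\Omega)$; since $\deg P\le k-1$ we have $\|u-\tilde v\|_{L^{k,p}(\Omega)}=\|u-v\|_{L^{k,p}(\Omega)}$. Applying the iterated Poincar\'e inequality to $w:=u-\tilde v$ then gives
$$\|u-\tilde v\|_{W^{k,p}(\Omega)}\le C_0\,\|u-v\|_{L^{k,p}(\Omega)}<\varepsilon ,$$
and since $C^\infty(\Omega)\cap W^{k,\infty}(\Omega)\subseteq W^{k,\infty}(\Omega)$ this is the claimed density.

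I do not expect any genuine obstacle here: the substance sits in Theorem~\ref{main_cor}, and the two new ingredients — the iterated Poincar\'e inequality and the polynomial correction that annihilates the low-order derivative averages — are both routine. The one point that warrants a little care is the preliminary reduction, i.e.\ making sure the hypotheses really do place us in the bounded setting for which Theorem~\ref{main_cor} is available.
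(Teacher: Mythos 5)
The paper does not spell out a proof of Corollary~\ref{main_cor3}; it is evidently left to the reader as a routine consequence of Theorem~\ref{main_cor} and the Poincar\'e hypothesis, and that is exactly what you supply. Your argument --- iterate the Poincar\'e inequality to pass from the $L^{k,p}$-seminorm to the full $W^{k,p}$-norm after subtracting a polynomial of degree at most $k-1$ chosen to annihilate the low-order derivative averages --- is correct and complete, and is the natural bootstrap. The induction in your iterated Poincar\'e inequality is sound (the normalization $(D^\alpha w)_\Omega=0$ for $|\alpha|\le k-1$ is inherited by each $\partial_i w$ at level $k-1$), and the polynomial correction is legitimate since the map $P\mapsto\bigl((D^\alpha P)_\Omega\bigr)_{|\alpha|\le k-1}$ is a linear isomorphism, as you argue by downward induction on degree.

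The one substantive remark concerns the point you already flagged: Theorem~\ref{main_cor} is stated for \emph{bounded} domains, while the corollary's hypothesis is only $\Omega\subsetneq\R^n$ together with a global $p$-Poincar\'e inequality. The latter forces $|\Omega|<\infty$ (otherwise $u_\Omega$ is undefined), but finite measure does not by itself give boundedness, and your polynomial corrector $P$ would typically fail to lie in $W^{k,\infty}(\Omega)$ on an unbounded domain of finite measure. So your added boundedness hypothesis is not a cosmetic convenience; it is genuinely used. The paper appears to intend the corollary to be read with boundedness implicit, and you are right to make that assumption explicit rather than leave it tacit.
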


%We prove the density under certain geometric assumptions and then show that Gromov hyperbolic domains satisfy the assumptions.
%The version of quasiconvexity we need is known as the Gehring-Hayman inequality in the literature (see \cite{GH}). The property of quasihyperbolic geodesics being John can in fact be relaxed to that of a separation condition being satisfied by the geodesics. It was shown in \cite{BHK} that a Gromov hyperbolic domain satisfies the separation and the Gehring-Hayman properties. Later \cite{BB} showed that also the converse is true. We give the necessary definitions in Section \ref{prelims}.
The paper is arranged as follows. In Section \ref{notation}, we set some of the notation to be used below. In Section \ref{prelims}, we introduce definitions of relevant function spaces and geometric conditions on domains being used in this paper. In Section \ref{homogeneous_density}, we prove the density result under the (weaker) geometric conditions from Section \ref{prelims}. In Section \ref{Hyperbolic}, we verify that Gromov hyperbolic domains satisfy the geometric assumptions sufficient for the density result which proves Theorem \ref{main_cor} and then prove some technical facts required for the proof of Theorem \ref{main_1} in Section \ref{homogeneous_density}.

\section{Notation}\label{notation}
We write $\#A$ for the cardinality of a set $A$ and the Lebesgue $\mathcal{L}^n$-measure of sets $A\subset\mathbb{R}^n$ is denoted $|A|$. For an open cube $Q$ with edges parallel to the coordinate axes, $l(Q)$ denotes its edge length. For a point $x\in\Omega$ we write $\text{d}_{\Omega}(x)$ for $\text{d}(x,\partial\Omega)$.
For a set $A\subset \R^n$ and $\epsilon>0$, $B(A,\epsilon)$ is the $\epsilon$-neighbourhood of $A$ in $\R^n$. When $A=\{x\}$, we just write $B(x,r)$.

The intrinsic length metric of a domain $\Omega$, denoted $\lambda_{\Omega}(x,y)$, is the infimum of the euclidean lengths of paths in $\Omega$ joining a given pair of points $x,y\in\Omega$. The intrinsic diameter metric is the infimum of euclidean diameters of paths in $\Omega$ joining a pair of points and is denoted $\delta_{\Omega}(x,y)$. For a curve $\gamma\subset\Omega$ (with injective parametrization) and $x,y\in\gamma$ we write $\gamma(x,y)$ for the subcurve of $\gamma$ between the points $x$ and $y$. We write $l(\gamma)$ for the length of a rectifiable curve $\gamma$.

For $x\in\Omega$, we write $B_{\Omega}(x,\epsilon)$, for the set of points $y$ in $\Omega$ such that $\lambda_{\Omega}(x,y)< \epsilon$, that is the $\epsilon$ ball in the intrinsic metric of $\Omega$. Similarly for sets $A\subset\Omega$ we write $B_{\Omega}(A,\epsilon)$ for the intrinsic $\epsilon$-neighbourhoods. We use $\dist(\cdot,\cdot)$ to denote the distance between sets obtained by taking infimum of pairwise distances of points lying in the respective sets. 

For a set $A\subset\Omega$, the domain $\Omega$ in question being fixed, we will write $\bar{A}$ for the closure of $A$ in the relative topology of $\Omega$.

\section{Preliminaries}\label{prelims}

Let $\Omega\subset\R^n$. Let $p\in[1,\infty]$ and $k\in\mathbb{N}$. We write $L^{k,p}(\Omega)$ for the space of Sobolev functions with $p$-integrable distributional derivatives of order $k$;
$$L^{k,p}(\Omega)=\{u\in L^1_{loc}(\Omega):D^{\alpha}u\in L^{p}(\Omega),\;\text{if}\;|\alpha|=k\}.$$ We equip it with the homogeneous Sobolev seminorm $\sum_{|\alpha|=k}\|\nabla^{\alpha} u\|_{L^p(\Omega)}$.
The (non-homogeneous) Sobolev space $W^{k,p}(\Omega)$ is defined as 
$$W^{k,p}(\Omega)=\{u\in L^1_{loc}(\Omega):D^{\alpha}u\in L^{p}(\Omega),\;\text{if}\;|\alpha|\leq k\}$$ and is equipped with the norm $\sum_{|\alpha|\leq k}\|\nabla^{\alpha} u\|_{L^p(\Omega)}$. Here and below an $n$-multi-index $\alpha$ is an $n$-vector of non-negative integers and $|\alpha|$ is its $\ell_1$-norm.

\subsection{Whitney decomposition}
We use the standard Whitney decomposition of a domain $\Omega\subset\R^n$ (see for instance Whitney \cite{whitney} or the book of Stein \cite[Chapter 6]{stein}).

\subsection{Some geometric conditions}\label{geometric_definitions}

\begin{defn}[Uniform domain]\label{uniformdomains}
 A domain $\Omega\subsetneq\R^n$ is $A$-uniform if there exists a constant $A\geq 1$ such that for each pair of points $x,y\in\Omega$, there exists a curve $\gamma_{xy}\subset\Omega$ joining $x$ and $y$ such that 
 \begin{enumerate}
  \item [(\textit{i})]$l(\gamma_{xy})\leq A|x-y|,$
  \item [(\textit{ii})]For any $z\in\gamma_{xy}$, $l(\gamma_{xy}(x,z)\wedge l(\gamma_{xy}(z,y)))\leq A\di_{\Omega}(z).$ 
 \end{enumerate}The curves $\gamma_{xy}$ are called uniform curves. Curves that satisfy only the second requirement are called doubly-John. 
\end{defn} 
Uniform domains, introduced by Martio-Sarvas \cite{MarSar} are more general than Lipschitz domains but nice enough for being Sobolev extension domains (see Jones \cite{J}). Therefore Lipschitz functions are dense both in the homogeneous and non-homogeneous Sobolev spaces defined in these domains. They come up naturally in the theory of quasiconformal mappings, for example as a characterization for the quasisymmetric images of disks. Note that the definition requires only a non-complete metric space. We may similarly define uniform spaces as metric spaces with non empty topological boundary that satisfy the conditions above. We will require the notion of uniform spaces in Section \ref{Hyperbolic}.

\begin{defn}(Quasihyperbolic metric)\label{defn_qhmetric}
Let $\Omega\subsetneq \mathbb{R}^n$ be a domain. Given $x,y\in\Omega$ the quasihyperbolic distance is defined as
\begin{equation*}k_{\Omega}(x,y):=\underset{\gamma_{xy}}{\inf} \int \frac{|dz|}{d_{\Omega}(z)} \end{equation*} where the infimum is taken over all rectifiable curves joining $x$ and $y$ in $\Omega$. 
\end{defn}
It is a consequence of the Arzela-Ascoli theorem that a quasihyperbolic geodesic (for which the infimum is achieved) exists for each pair of points; see for example the paper of Gehring-Osgood \cite{GeOs} where several facts about the quashyperbolic metric and its relation with quasiconformal mappings had been proved. It was also shown in \cite{GeOs} that in uniform domains 
\begin{equation}\label{quasihyperbolic_growth}
  k_{\Omega}(x,y)\simeq \log\left(1+\frac{|x-y|}{\di_{\Omega}(x)\wedge \di_{\Omega}(y)}\right)
\end{equation}for $x,y\in\Omega$.
It may be noted that quasihyperbolic distance dominates the logarithmic term in (\ref{quasihyperbolic_growth}) in general, that is 
 \begin{equation}\label{quasihyperbolic_growth_2}
  k_{\Omega}(x,y)\geq \log\left(1+\frac{\lambda_{\Omega}(x,y)}{\di_{\Omega}(x)\wedge \di_{\Omega}(y)}\right)
\end{equation}for $x,y\in\Omega$, where $\Omega$ need not be uniform. We use this and its consequence 
\begin{equation}
 k_{\Omega}(x,y) \geq \left| \log{\left(\frac{d_{\Omega}(x)}{d_{\Omega}(y)}\right)}\right|,
\end{equation}
later in Section \ref{Hyperbolic}.
 We note that the equivalence in \eqref{quasihyperbolic_growth} characterises uniformity (see \cite{GeOs}). We define here two more quantities (which are equivalent to the logarithmic term in \eqref{quasihyperbolic_growth} in uniform domains but not in general), which we will require later. Set $$\Delta_{\Omega}(x,y)=\log\left(1+\frac{\delta_{\Omega}(x,y)}{\di_{\Omega}(x)\wedge \di_{\Omega}(y)}\right)$$ and $$\Lambda_{\Omega}(x,y)=\log\left(1+\frac{\lambda_{\Omega}(x,y)}{\di_{\Omega}(x)\wedge \di_{\Omega}(y)}\right),$$ for $x,y\in\Omega\subsetneq\R^n$. These quantities are quasi-metrics in uniform domains.

\begin{defn}[Ball separation property]
 Let $\Omega\subsetneq\R^n$ be a domain. A curve $\Gamma\subset\Omega$ satisfies the $c$-ball separation property, for some $c\geq 1$, if for every point $z\in\Gamma$, the intrinsic ball $B=B_{\Omega}(z,c\di_{\Omega}(z))$ separates any  $x,y\in\Gamma\setminus B$ in $\Omega$ such that $z\in\Gamma(x,y)$. The domain $\Omega$ has the $c$-ball separation property if every quasihyperbolic geodesic in $\Omega$ satisfies the $c$-ball separation property.
\end{defn}
The above separation property was perhaps first introduced by Buckley-Koskela \cite{KosBuck} who showed that in the class of domains satisfying the above property, if the domain admits a global $(np/(n-p),p)$-Sobolev-Poincar\'e inequality for $1\leq p<n$, then the separation condition of geodesics improves to geodesics being John.
If a domain $\Omega$ is the quasiconformal image of a uniform domain, then the quasihyperbolic geodesics of $\Omega$ are the images of diameter-uniform curves (that is properties (\textit{i}) and (\textit{ii}) in Definition \ref{uniformdomains} hold with length replaced by diameter; existence of such curves for each pair of points is quantitatively equivalent to the definition of uniformity given here, see Martio-Sarvas \cite{MarSar} for this) under the quasiconformal mapping, see Heinonen-N\"akki \cite{HeiNak}. An argument using the conformal modulus (see \cite{KosBuck}) then shows that quasiconformal images of diameter uniform curves (and thus the quasihyperbolic geodesics in quasiconformal images of uniform domains in particular) have the ball separation property.

\begin{defn}[Gehring-Hayman property]
Let $\Omega\subsetneq \R^n$ be a domain. A curve $\Gamma\subset\Omega$ satisfies the $c$-Gehring-Hayman property, for $c\geq 1$, if $$l(\Gamma(x,y))\leq c\lambda_{\Omega}(x,y),$$ for all $x,y\in\Gamma$. The domain $\Omega$ has the $c$-Gehring-Hayman property if every quasihyperbolic geodesic $\Gamma\subset\Omega$ satisfies the $c$-Gehring-Hayman property. 
\end{defn}
The Gehring-Hayman property appears first perhaps in the paper \cite{GH} where it was shown to hold for hyperbolic geodesics in simply connected domains in the plane. Subsequently, it was shown to hold for quasiconformal images of uniform domains by Heinonen-Rohde \cite{HeiRoh} and for conformal metric deformations of the euclidean unit ball by Bonk-Koskela-Rohde \cite{BonkKosRoh}). We will define below a local version of this property which will suffice for our purpose.

By a Gromov hyperbolic domain we mean a domain $\Omega$ such that the space $(\Omega,k_{\Omega})$ is Gromov hyperbolic.
\begin{defn}[Gromov hyperbolicity]
 Let $0\leq \delta<\infty$. A domain $\Omega\subsetneq\R^n$ is $\delta$-Gromov hyperbolic if all quasihyperbolic geodesic triangles are $\delta$-thin in the quasihyperbolic metric. That is, given any three points $x,y,z\in\Omega$ and quasihyperbolic geodesics $\Gamma_{xy},\Gamma_{yz}$ and $\Gamma_{zx}$ joining them pairwise, it holds  for any $w$ lying in any of the three geodesics that the ball of radius $\delta$, in the quasihyperbolic metric, centered at $w$ intersects the union of the remaining two geodesics. 
\end{defn}
Quasiconformal images of uniform domains are Gromov hyperbolic (for example by the results in \cite{HeiRoh} and \cite{KosBuck} combined with Theorem \ref{Gromovhyperbolic_implies} below). A strictly weaker version of uniformity obtained by using the intrinsic metric of the domain instead of the euclidean metric, for the quasiconvexity of the uniform curve, is often useful. Domains satisfying this latter property are called inner uniform. 
%(maybe-See for example \cite{saloffcoste} for heat kernel estimates and other results for parabolic differential equations in these domains.) 
Inner uniform domains are Gromov hyperbolic (see \cite{BHK}), and thus functions with bounded derivatives are dense in the homogeneous Sobolev spaces in these domains by Theorem \ref{main_1}. In fact, the density in this case holds also in the non-homogeneous Sobolev norm as the hypotheses of Corollary \ref{main_cor3} are satisfied (see for example \cite{Bo}).
The next theorem is a known characterization of Gromov hyperbolic domains. The geometric implications were proved in \cite{BHK} using uniformization. It was later shown by Balogh-Buckley \cite{BB} that they characterize Gromov hyperbolicity. 
\begin{thm}[\cite{BHK},\cite{BB}]\label{Gromovhyperbolic_implies}
A $\delta$-Gromov hyperbolic domain has both the $c_1$-ball separation property and the $c_2$-Gehring-Hayman property, for $c_1=c_1(\delta,n)$ and $c_2=c_2(\delta,n)$. Conversely, a domain which has both the $c_1$-ball separation property and the $c_2$-Gehring-Hayman property is also $\delta$-Gromov hyperbolic, for $\delta=\delta(c_1,c_2,n)$. 
\end{thm}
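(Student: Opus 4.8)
The plan is to prove the two implications separately. Both rest on the principle that the ball separation and Gehring--Hayman conditions are \emph{inner} properties, insensitive to conformal changes of gauge that preserve the quasihyperbolic metric. For the implication ``$\delta$-hyperbolic $\Rightarrow$ ball separation $+$ Gehring--Hayman'' I would pass to the uniformization of $(\Omega,k_\Omega)$, where both conditions are classical, and then transport them back to the Euclidean gauge; for the converse I would verify the Rips thin-triangle condition directly from the two geometric hypotheses, following Balogh--Buckley \cite{BB}.

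\emph{From Gromov hyperbolicity to the two properties.} Fix a basepoint $x_0\in\Omega$ and, for small $\epsilon>0$, let $\rho_\epsilon(x)=e^{-\epsilon k_\Omega(x_0,x)}$ and let $d_\epsilon$ be the length metric on $\Omega$ obtained by integrating $\rho_\epsilon$ against quasihyperbolic arclength. By the uniformization theorem of Bonk--Heinonen--Koskela \cite{BHK} there is $\epsilon_0=\epsilon_0(\delta)>0$ so that for $0<\epsilon\le\epsilon_0$ the space $\Omega_\epsilon:=(\Omega,d_\epsilon)$ is bounded and $A$-uniform with $A=A(\delta,n)$, with $\dist_{d_\epsilon}(x,\partial_\epsilon\Omega)\simeq\rho_\epsilon(x)$ and, crucially, with quasihyperbolic metric $k_{\Omega_\epsilon}$ bi-Lipschitz equivalent to $k_\Omega$ with an absolute constant. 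In particular $\Omega$ and $\Omega_\epsilon$ have the same quasihyperbolic geodesics up to reparametrization, and each such geodesic $\Gamma$ is a uniform curve in $\Omega_\epsilon$ (a general fact about uniform spaces, \cite{BHK}). Condition (\textit{i}) of Definition \ref{uniformdomains} then gives the Gehring--Hayman inequality for $\Gamma$ in the $d_\epsilon$-gauge, and uniform curves are readily seen to satisfy the $c'$-ball separation property, $c'=c'(A)$, in $\Omega_\epsilon$. It remains to transport these to $(\Omega,|\cdot|)$. Ball separation transports immediately: if $w\in B_{d_\epsilon}\bigl(z,\,c'\dist_{d_\epsilon}(z,\partial_\epsilon\Omega)\bigr)$ then, by the growth estimate \eqref{quasihyperbolic_growth} applied in the uniform space $\Omega_\epsilon$, $k_{\Omega_\epsilon}(z,w)\le C(A)$, hence $k_\Omega(z,w)\lesssim 1$, hence by \eqref{quasihyperbolic_growth_2} $\lambda_\Omega(z,w)\le c_1 d_\Omega(z)$ with $c_1=c_1(\delta,n)$; thus the $d_\epsilon$-ball lies inside the Euclidean intrinsic ball $B_\Omega(z,c_1 d_\Omega(z))$, and enlarging a set that separates $x$ from $y$ keeps it separating.

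The Gehring--Hayman statement is the subtler half of this direction, and is where I expect the real work: the conformal factor $\rho_\epsilon$ and the Euclidean distance $d_\Omega(\cdot)$ to $\partial\Omega$ are \emph{not} pointwise comparable, so $d_\epsilon$-length and Euclidean length cannot be compared arclength by arclength. One must instead compare the geodesic to competing curves through a covering/modulus estimate adapted to the two gauges, exactly as in \cite{BHK}; I would reproduce that argument of \cite{BHK} essentially verbatim, obtaining the $c_2$-Gehring--Hayman property with $c_2=c_2(\delta,n)$. (Only a local version of Gehring--Hayman is needed in the later sections of this paper, and that version already falls out of the preceding without the global estimate.)

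\emph{From the two properties to Gromov hyperbolicity.} Assume $\Omega$ has the $c_1$-ball separation and $c_2$-Gehring--Hayman properties, let $[x,y],[y,w],[w,x]$ be quasihyperbolic geodesics forming a triangle, and let $z\in[x,y]$; I must show $k_\Omega\bigl(z,[y,w]\cup[w,x]\bigr)\le\delta=\delta(c_1,c_2,n)$. If $z$ is within quasihyperbolic distance $\delta$ of $x$ or $y$ there is nothing to prove, so assume not; then \eqref{quasihyperbolic_growth_2} forces $x,y\notin B_\Omega(z,c_1 d_\Omega(z))$ once $\delta$ is large enough, and the ball separation property, applied to $[x,y]$ at the points lying in a quasihyperbolic unit neighbourhood of $z$, makes $[y,w]\cup[w,x]$ run alongside $[x,y]$ near $z$. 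The point requiring care — and the main obstacle in this direction — is that producing a point of $[y,w]\cup[w,x]$ merely \emph{Euclidean-intrinsically} close to $z$ does not bound its quasihyperbolic distance to $z$, because the connecting path may dip toward $\partial\Omega$. Balogh--Buckley \cite{BB} circumvent this by a chaining argument: Gehring--Hayman keeps the Euclidean intrinsic length and the quasihyperbolic length of the relevant portion of $[y,w]\cup[w,x]$ comparable, and a Gromov-product estimate along the chain then locates a point of $[y,w]\cup[w,x]$ at quasihyperbolic distance at most $\delta$ from $z$. I would follow \cite{BB} for this. Thinness of all quasihyperbolic geodesic triangles is, by definition, $\delta$-Gromov hyperbolicity of $(\Omega,k_\Omega)$, so this completes the proof; everything outside the two flagged steps is bookkeeping with \eqref{quasihyperbolic_growth} and \eqref{quasihyperbolic_growth_2}.
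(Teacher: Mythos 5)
The paper does not prove Theorem~\ref{Gromovhyperbolic_implies}; it is stated with the attribution ``[\cite{BHK}, \cite{BB}]'' and imported as a black box, so there is no internal proof to compare your sketch against. Setting that aside, your sketch contains a real gap in the forward direction. You claim that because $k_{\Omega_\epsilon}$ is bi-Lipschitz to $k_\Omega$, the two spaces ``have the same quasihyperbolic geodesics up to reparametrization.'' This is false in general: bi-Lipschitz equivalence of length metrics preserves quasi-geodesics, not geodesics (consider $\ell^1$ versus $\ell^2$ on $\R^2$). Consequently, the fact that ``quasihyperbolic geodesics in an $A$-uniform space are uniform curves'' applies to genuine $k_{\Omega_\epsilon}$-geodesics, not to $k_\Omega$-geodesics, and your conclusion that each quasihyperbolic geodesic $\Gamma$ of $\Omega$ is a uniform curve of $\Omega_\epsilon$ does not follow from what you have written. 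To repair this you would either invoke the Morse lemma in the Gromov hyperbolic space $(\Omega_\epsilon, k_{\Omega_\epsilon})$ and then check that a curve within bounded $k_{\Omega_\epsilon}$-distance of a uniform curve is itself quasi-uniform, or---as \cite{BHK} actually does---run a modulus estimate that compares the two gauges directly without attempting to match up geodesics.

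The remaining pieces of your sketch (the modulus argument for Gehring--Hayman in the forward direction and the chaining argument for the converse) are explicitly deferred to \cite{BHK} and \cite{BB} rather than carried out; that is a defensible choice for a result the paper itself only cites, but it means the only independent content of your proposal is precisely the flawed geodesic-correspondence step. Your transport of ball separation is otherwise sound once that step is repaired, since enlarging a separating set while keeping the two endpoints outside it still separates them.
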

%As a consequence of the results in \cite{BHK}, it is known that Gromov hyperbolic domains and uniform domains are conformally related; see Section \ref{Hyperbolic}. 

We define the following local versions of the Gehring-Hayman condition defined above which are suitable to our purpose. Recall the definitions of $\Lambda_{\Omega}(\cdot,\cdot)$ and  $\Delta_{\Omega}(\cdot,\cdot)$ from the discussion following Definition \ref{defn_qhmetric}.

%\begin{defn}[Local Gehring-Hayman]
 %Let $\Omega\subset\R^n$ be a domain and $c,R\geq 1$. We say that $\Omega$ has the $(c,R)$-Gehring-Hayman property if for any point $x\in\Omega$, and any quasihyperbolic geodesic $\Gamma_{xy}$ joining $x$ to any point $y\in B_{\Omega}(x,R)$, it holds that
%\begin{equation*}
 %l(\Gamma_{xy})\leq c\lambda_{\Omega}(x,y).
%\end{equation*}
%\end{defn}

 \begin{defn}[Local length/diameter Gehring-Hayman]\label{local_GeHa}
 Let $\Omega\subsetneq\R^n$ be a domain and $c\geq 1, R\geq 0$ be fixed. Let $E\subset\Omega$ be given. We say that $E$ has the $(c,R)$-length Gehring-Hayman property if for any pair of points $x,y\in E$  for which  
 $$\Lambda_{\Omega}(x,y)\leq R,$$ each quasihyperbolic geodesic $\Gamma_{xy}$ joining $x$ and $y$ in $\Omega$ satisfies
 \begin{equation*}
 l(\Gamma_{xy})\leq c\,\lambda_{\Omega}(x,y).
\end{equation*}
 We say that $E$ has the $(c,R)$-diameter Gehring-Hayman property if for any pair of points $x,y\in E$ for which
 $$\Delta_{\Omega}(x,y)\leq R,$$  
 each quasihyperbolic geodesic $\Gamma_{xy}$ joining $x$ and $y$ in $\Omega$ satisfies
\begin{equation*}
 \diam(\Gamma_{xy})\leq c\,\delta_{\Omega}(x,y).
\end{equation*}
\end{defn}

\begin{rem}\label{alternate_defn}
 The above definition may be equivalently reformulated by saying $E\subset\Omega$ has the above $(c,R)$-length Gehring-Hayman property if for all $x,y\in E$, for which \begin{equation}\label{hyperbolicballs}\frac{1}{R}\,\di_{\Omega}(x)\leq \di_{\Omega}(y)\leq R\,\di_{\Omega}(x)\end{equation} and  \begin{equation}\lambda_{\Omega}(x,y)\leq R\,(\di_{\Omega}(x)\wedge \di_{\Omega}(y)),\end{equation} each quasihyperbolic geodesic $\Gamma_{xy}$ joining $x$ and $y$ in $\Omega$ satisfies
 \begin{equation*}
 l(\Gamma_{xy})\leq c\,\lambda_{\Omega}(x,y).
\end{equation*}
 Similarly, we may say that $E$ has the $(c,R)$-diameter Gehring-Hayman property if 
\begin{equation*}
 \diam(\Gamma_{xy})\leq c\,\delta_{\Omega}(x,y)
\end{equation*}holds for any pair of points $x,y\in E$ and quasihyperbolic geodesics $\Gamma_{xy}$ joining them, where $x$ and $y$ satisfy the condition (\ref{hyperbolicballs}) of being comparably distant from the boundary as above and $$\delta_{\Omega}(x,y)\leq R\,(\di_{\Omega}(x)\wedge \di_{\Omega}(y)).$$ Note that the value of $R$ in this reformulation is related to but different from the $R$ in the previous defintion. 
\end{rem}

The first condition (\ref{hyperbolicballs}) says for example that quasihyperbolic unit balls centred at $x$ and $y$ have comparable sizes in the euclidean geometry.
It is not known if the $(c,R)$-length and diameter Gehring-Hayman conditions defined above are quantitatively equivalent for domains which have the ball separation property. 
However, in the case of domains where quasihyperbolic geodesics are doubly-John curves (see Definition \ref{uniformdomains}) the equivalence holds in the global sense. A domain is $(c,\infty)$-diameter Gehring-Hayman, if it is a $(c,R)$-diameter Gehring-Hayman domain for all $R>0$.

\begin{lem}\label{johndialen}
 Let $\Omega\subset\R^n$ be a domain such that the quasihyperbolic geodesics are $A$-doubly-John curves. Suppose $\Omega$ has the $(c_0,\infty)$-diameter Gehring-Hayman property. Then $\Omega$ also has the $c$-Gehring-Hayman property, where $c=c(A,c_0,n)$, that is, $\Omega$ is inner uniform. The converse is also true quantitatively. 
\end{lem}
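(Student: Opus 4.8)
The plan is to prove the two implications separately. For the forward direction, assume the quasihyperbolic geodesics are $A$-doubly-John and that $\Omega$ has the $(c_0,\infty)$-diameter Gehring-Hayman property; I want to produce a uniform length bound $l(\Gamma_{xy})\leq c\,\lambda_\Omega(x,y)$. Fix $x,y\in\Omega$ and a quasihyperbolic geodesic $\Gamma=\Gamma_{xy}$. First I would reduce the length estimate to a collection of diameter estimates along $\Gamma$. The natural device is a \emph{dyadic (Whitney-type) stopping decomposition of the geodesic}: parametrize $\Gamma$ by quasihyperbolic arclength and pick consecutive points $z_0=x,z_1,z_2,\dots$ along $\Gamma$ so that each subarc $\Gamma_i=\Gamma(z_{i-1},z_i)$ has quasihyperbolic length comparable to $1$ (say exactly $1$, except possibly the last piece). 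On each such unit-length subarc the doubly-John property together with \eqref{quasihyperbolic_growth_2} (and its consequence $k_\Omega(u,v)\geq|\log(d_\Omega(u)/d_\Omega(v))|$) forces $d_\Omega$ to be comparable at the two endpoints and forces $\delta_\Omega(z_{i-1},z_i)\lesssim d_\Omega(z_{i-1})$; hence the pair $(z_{i-1},z_i)$ satisfies the hypotheses \eqref{hyperbolicballs} (with an absolute $R$), so the $(c_0,\infty)$-diameter Gehring-Hayman property gives $\diam(\Gamma_i)\leq c_0\,\delta_\Omega(z_{i-1},z_i)\lesssim d_\Omega(z_{i-1})$, and then also $l(\Gamma_i)\lesssim \operatorname{osc}_{\Gamma_i} d_\Omega + \diam(\Gamma_i)$ is controlled by a constant multiple of $d_\Omega(z_{i-1})$, again via the doubly-John bound on the arclength of a unit quasihyperbolic subarc.

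Summing, $l(\Gamma)=\sum_i l(\Gamma_i)\lesssim \sum_i d_\Omega(z_i)$, so the whole matter comes down to the geometric-series estimate $\sum_i d_\Omega(z_i)\lesssim \lambda_\Omega(x,y)$. This is where the doubly-John property does the decisive work: along a doubly-John curve the distance-to-boundary $d_\Omega$ grows at least exponentially in quasihyperbolic arclength as one moves away from an endpoint toward the "middle" of the curve, so if we let $z_m$ be a point on $\Gamma$ with $d_\Omega(z_m)$ maximal (or comparable to maximal), the sums of $d_\Omega(z_i)$ on each side of $z_m$ are dominated by a geometric series with ratio bounded away from $1$, giving $\sum_i d_\Omega(z_i)\lesssim d_\Omega(z_m)$. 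Finally $d_\Omega(z_m)\lesssim \lambda_\Omega(x,y)$: indeed the second doubly-John inequality at $z_m$ says $l(\Gamma(x,z_m))\wedge l(\Gamma(z_m,y))\leq A\,d_\Omega(z_m)$, i.e. $d_\Omega(z_m)\gtrsim \min$ of the two half-lengths, and combined with the already-established $l(\Gamma)\lesssim \sum d_\Omega(z_i)\lesssim d_\Omega(z_m)$ one gets $d_\Omega(z_m)\simeq l(\Gamma)$, hence $d_\Omega(z_m)\leq |x-z_m|+|z_m-y|$-type reasoning (or directly $d_\Omega(z_m)\lesssim \diam\Gamma \le l(\Gamma)$, closing a loop) must be replaced by the cleaner bound: since $\Gamma$ is a curve in $\Omega$ joining $x,y$, any ball $B(z_m,d_\Omega(z_m))\subset\Omega$, and a short-cut argument shows $\lambda_\Omega(x,y)\gtrsim d_\Omega(z_m)$ because otherwise $x$ and $y$ could be joined inside $B(z_m, d_\Omega(z_m))$ contradicting that the geodesic leaves every such ball once $l(\Gamma)\gtrsim d_\Omega(z_m)$. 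I would phrase this last step carefully using $\lambda_\Omega(x,y)\geq d_\Omega(x)\wedge d_\Omega(y)$ together with the doubly-John exponential growth, which directly yields $\lambda_\Omega(x,y)\gtrsim d_\Omega(z_m)$ up to constants depending only on $A$.

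For the converse, assume $\Omega$ is $c$-Gehring-Hayman (inner uniform, with $A$-doubly-John geodesics); I must recover the $(c_0,\infty)$-diameter Gehring-Hayman property, i.e. $\diam(\Gamma_{xy})\leq c_0\,\delta_\Omega(x,y)$. This is the easier direction: $\diam(\Gamma_{xy})\leq l(\Gamma_{xy})\leq c\,\lambda_\Omega(x,y)$, so it suffices to know $\lambda_\Omega(x,y)\lesssim \delta_\Omega(x,y)$. This last comparison is itself a standard consequence of the doubly-John property: given any path $\sigma$ from $x$ to $y$ in $\Omega$ with small euclidean diameter, the quasihyperbolic geodesic $\Gamma_{xy}$ is doubly-John and hence inner uniform by what we just proved, and one estimates its length by a chaining/Whitney argument against $\diam(\sigma)$, exactly the type of argument showing "diameter-uniform $\Rightarrow$ inner uniform" — equivalently one invokes the remark following Definition \ref{uniformdomains} that the diameter and length versions of the uniformity conditions are quantitatively equivalent. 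The \textbf{main obstacle} is the forward direction's geometric-series bound $\sum_i d_\Omega(z_i)\lesssim \lambda_\Omega(x,y)$: making the exponential-growth-of-$d_\Omega$-along-doubly-John-curves statement precise with explicit constants, handling the two endpoint arcs where $d_\Omega$ is small and the middle where it peaks, and connecting the peak value back to the intrinsic distance $\lambda_\Omega(x,y)$ rather than to some ambient diameter. Everything else is bookkeeping with \eqref{quasihyperbolic_growth_2} and the definition of doubly-John curves.
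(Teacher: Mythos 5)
There is a genuine gap, and your route is also considerably more complicated than what the paper does.

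\textbf{Forward direction.} The paper's proof is a one-step application of the doubly-John hypothesis: let $z$ be the \emph{euclidean-length midpoint} of $\Gamma_{xy}$; then $l(\Gamma)/2 = l(\Gamma(x,z))\wedge l(\Gamma(z,y)) \le A\,\di_\Omega(z)$, so $l(\Gamma)\le 2A\,\di_\Omega(z)$, and the whole question collapses to bounding $\di_\Omega(z)$ by $\delta_\Omega(x,y)$. This is done by a two-case split. If (WLOG $\di_\Omega(x)\le\di_\Omega(y)$) $\delta_\Omega(x,y)\ge\di_\Omega(x)/2$, then the diameter Gehring-Hayman hypothesis gives $|x-z|\le\diam(\Gamma)\le c_0\delta_\Omega(x,y)$, so $\di_\Omega(z)\le\di_\Omega(x)+c_0\delta_\Omega(x,y)\lesssim\delta_\Omega(x,y)\le\lambda_\Omega(x,y)$. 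If instead $\delta_\Omega(x,y)<\di_\Omega(x)/2$, then $|x-y|<\di_\Omega(x)/2$, the straight segment $[x,y]$ lies in $\Omega$, $k_\Omega(x,y)\le 2|x-y|/\di_\Omega(x)\le 1$, and $\di_\Omega\le e\,\di_\Omega(x)$ along $\Gamma$; integrating gives $l(\Gamma)\le e\,\di_\Omega(x)\,k_\Omega(x,y)\le 2e\,|x-y|=2e\,\lambda_\Omega(x,y)$, without using the diameter GH property at all. Your scheme (decompose $\Gamma$ into unit quasihyperbolic pieces, bound each piece, sum a geometric series keyed to a $\di_\Omega$-maximizing point $z_m$) never actually escapes circularity: $\sum_i\di_\Omega(z_i)\simeq l(\Gamma)$ automatically, and your step 2 use of the diameter GH on each unit piece is vacuous, because on a unit q.h.\ piece one already has $\diam(\Gamma_i)\le l(\Gamma_i)\le e\,\di_\Omega(z_{i-1})$ with no GH input. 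The real work is your step 4, which you yourself flag as the ``main obstacle'', and the sketch there is wrong: the inequality $\lambda_\Omega(x,y)\ge\di_\Omega(x)\wedge\di_\Omega(y)$ is \emph{false} (take $x,y$ very close, deep inside $\Omega$), and the ``shortcut'' claim that $x,y$ could otherwise be joined inside $B(z_m,\di_\Omega(z_m))$ does not contradict anything about the geodesic. What actually closes step 4 is precisely the paper's use of the diameter GH to bound $\di_\Omega(z)-\di_\Omega(x)$ by $\diam(\Gamma)\le c_0\delta_\Omega(x,y)$ — the diameter GH acts between $x$ and $y$, not pieceweise.

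\textbf{Converse direction.} Here you assert that $\lambda_\Omega(x,y)\lesssim\delta_\Omega(x,y)$ follows from doubly-John plus ``the remark following Definition \ref{uniformdomains}.'' That remark (Martio--Sarvas) compares \emph{euclidean} length $l(\gamma)$ and \emph{euclidean} diameter $\diam(\gamma)$ in the definition of uniform curves; it does \emph{not} say that the intrinsic length metric $\lambda_\Omega$ is comparable to the intrinsic diameter metric $\delta_\Omega$. The latter comparison, in the form $\diam(\Gamma)\le M\,\delta_\Omega(x,y)$, is exactly the diameter Gehring-Hayman statement being established, and the paper gets it as Theorem \ref{diameter_GeHa} via the BHK uniformization and a Loewner/conformal-modulus argument, not by bookkeeping. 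Inner uniformity implies Gromov hyperbolicity (through Theorem \ref{Gromovhyperbolic_implies}), and then Theorem \ref{diameter_GeHa} gives the diameter GH — that is the paper's route, and it is genuinely nontrivial; your sketch does not provide an elementary substitute.
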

\begin{proof}
 Let $\Gamma$ be a quasihyperbolic geodesic joining $x,y\in\Omega$. 
 %Choose a Besicovitch covering $\{B_i=B(x_i,r_i)\}_i$ of balls from the collection $\{B(x,\di_{\Omega}(x)),x\in\Gamma\}$. 
Let $z\in\Gamma$ be the midpoint, that is $l(\Gamma(x,z))=l(\Gamma(z,y))=l(\Gamma)/2$. We may assume that $\delta_{\Omega}(x,y)\geq \di_{\Omega}(z)/2$. Then by the John property of $\Gamma$, we have
 $$3A\delta_{\Omega}(x,y)\geq A\di_{\Omega}(z)\geq l(\Gamma(x,z)).$$ 
 %and thus$$100A\di_{\Omega}(z)\geq \sum_{x_i\in\Gamma(x,z)}\di_{\Omega}(x_i).$$ Then it follows by a packing argument that $\Gamma(x,z)\subset 200B(z,\di_{\Omega}(z))$ and similarly $\Gamma(z,y)\subset 200B(z,\di_{\Omega}(z))$.
 The converse follows from Lemma \ref{diameter_GeHa}.
\end{proof}

The converse utilizes conformal uniformization (see Section \ref{Hyperbolic}) and the proof can not be applied if the global inequalities are replaced by the local ones. Lemma \ref{local_dia_implies_len_2} below says that if there exists any curve that joins $x$ and $y$ and lies uniformly away from the boundary then the local diameter Gehring-Hayman property implies the local length Gehring-Hayman property for the pair $\{x,y\}$. %The proof is simple and essentially contained in the arguments needed for the previous lemma, but we provide it here for convenience of the reader.
We need to state the following simple lemma before Lemma \ref{local_dia_implies_len_2}.

\begin{lem}\label{local_dia_implies_len}
 Let $M>0$ be a given number. Suppose $\Omega$ has the $(c,R)$-diameter Gehring-Hayman property and the $c_0$-ball separation property. Let $x,y\in\Omega$ be such that 
  $$\frac{1}{R}\di_{\Omega}(x)\leq \di_{\Omega}(y)\leq R\di_{\Omega}(x)$$ and  $$\delta_{\Omega}(x,y)\leq R(\di_{\Omega}(x)\wedge \di_{\Omega}(y)).$$
 Suppose there exists a curve $\gamma\subset\Omega$ joining $x$ and $y$ such that for each $z\in\gamma$, $\di_{\Omega}(z)\geq (\di_{\Omega}(x)\wedge \di_{\Omega}(y))/M$. Then $$k_{\Omega}(x,y)\leq c'(c,R,M,n).$$ 
\end{lem}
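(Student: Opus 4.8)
The plan is to single out a quasihyperbolic geodesic $\Gamma=\Gamma_{xy}$ joining $x$ and $y$ that is simultaneously Euclidean-thin (bounded diameter) and uniformly far from $\partial\Omega$, and then to argue that any such geodesic is automatically short in the quasihyperbolic metric. By scale invariance of $k_\Omega$ I would normalise so that $\di_\Omega(x)\wedge\di_\Omega(y)=1$; one may also assume $M\ge1$, since enlarging $M$ only weakens the hypothesis on $\gamma$. Then $\di_\Omega(x),\di_\Omega(y)\ge1$ and $\delta_\Omega(x,y)\le R$, so $\Delta_\Omega(x,y)=\log(1+\delta_\Omega(x,y))\le\log(1+R)\le R$, and hence the $(c,R)$-diameter Gehring--Hayman property (Definition~\ref{local_GeHa}) gives
\[
\diam(\Gamma)\le c\,\delta_\Omega(x,y)\le cR=:D .
\]

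Next I would keep $\Gamma$ away from $\partial\Omega$ using the $c_0$-ball separation property and the given curve $\gamma$, showing $\di_\Omega(w)\ge\rho:=\tfrac{1}{2M(1+c_0)}$ for every $w\in\Gamma$. Fix $w\in\Gamma$ and consider the intrinsic ball $B=B_\Omega(w,c_0\di_\Omega(w))$. If $x\in B$ or $y\in B$, say $x\in B$, then $|x-w|\le\lambda_\Omega(x,w)<c_0\di_\Omega(w)$, so $1\le\di_\Omega(x)\le\di_\Omega(w)+|x-w|<(1+c_0)\di_\Omega(w)$ and we are done. Otherwise $x,y\in\Gamma\setminus B$ with $w\in\Gamma(x,y)=\Gamma$, so ball separation forces $B$ to separate $x$ from $y$ in $\Omega$; the connected curve $\gamma$ must then meet $B$, say at $z$, and $\di_\Omega(z)\le\di_\Omega(w)+|z-w|<(1+c_0)\di_\Omega(w)$ while $\di_\Omega(z)\ge(\di_\Omega(x)\wedge\di_\Omega(y))/M=1/M$, so $\di_\Omega(w)>\tfrac{1}{M(1+c_0)}\ge\rho$.

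It then remains to deduce $k_\Omega(x,y)\le c'$ from the two facts ``$\diam(\Gamma)\le D$'' and ``$\di_\Omega\ge\rho$ on $\Gamma$''. This is the delicate step, because bounded diameter by itself does \emph{not} bound $k_\Omega$ (a long thin corridor is a counterexample); one must use that $\Gamma$ is a geodesic. I would parametrise $\Gamma$ by quasihyperbolic arclength, $\sigma\colon[0,L]\to\Omega$ with $L=k_\Omega(x,y)$, so that $k_\Omega(\sigma(s),\sigma(t))=|s-t|$ for all $s,t$. Comparing $k_\Omega$ with the quasihyperbolic metric of the Euclidean ball $B(\sigma(s),\di_\Omega(\sigma(s)))\subset\Omega$ --- in which the distance from the centre to a point at Euclidean distance $\tau$ is $\log\frac{r}{r-\tau}$ --- and using $|\sigma(s)-\sigma(t)|\le\rho/2\le\di_\Omega(\sigma(s))/2$, one gets that $|\sigma(s)-\sigma(t)|\le\rho/2$ implies $|s-t|=k_\Omega(\sigma(s),\sigma(t))\le\log2$. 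Contrapositively, $|s-t|>\log2$ forces $|\sigma(s)-\sigma(t)|>\rho/2$, so the points $\sigma(2j\log2)$, $0\le j\le\lfloor L/(2\log2)\rfloor$, are pairwise more than $\rho/2$ apart and all lie within Euclidean distance $D$ of $x$ (as $x\in\Gamma$ and $\diam\Gamma\le D$); an elementary packing estimate then bounds their number, hence $L$, by $2\log2\,(1+4D/\rho)^n$. Inserting $D=cR$ and $\rho=\tfrac{1}{2M(1+c_0)}$ gives the desired finite bound $c'$, depending on $c$, $R$, $M$, $n$ (and on the separation constant $c_0$).

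The main obstacle is this final step: realising that the bounded-diameter information is useless in isolation and must be combined with the uniform lower bound on $\di_\Omega$ coming from the separation argument, and then finding the short ball-comparison-plus-packing argument that upgrades ``thin and far from the boundary'' to ``quasihyperbolically short''. The first step is a direct application of the diameter Gehring--Hayman hypothesis, and the second is an elementary topological argument with the ball separation property.
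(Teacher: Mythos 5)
Your proof is correct and follows essentially the same strategy as the paper's: apply the $(c,R)$-diameter Gehring--Hayman hypothesis to bound $\diam(\Gamma)$, then use the ball separation property together with the given curve $\gamma$ to obtain a uniform lower bound on $\di_\Omega$ along $\Gamma$, and finally pack. The only difference is cosmetic: the paper counts the Whitney cubes meeting $\Gamma$ (they have size $\gtrsim\di_\Omega(x)$ and lie in a ball of radius $\lesssim\di_\Omega(x)$, so there are boundedly many, and a geodesic meets each one for bounded quasihyperbolic length), whereas you pack a net of points on $\Gamma$ directly using the inscribed-ball comparison $k_\Omega(\sigma(s),\sigma(t))\le\log\tfrac{r}{r-|\sigma(s)-\sigma(t)|}$; your version makes explicit the step that the paper leaves tacit (why boundedly many comparably-sized cubes implies a bound on $k_\Omega$). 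You also correctly observe that the resulting constant depends on $c_0$ as well, a dependence the paper's statement omits but its proof clearly incurs.
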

\begin{proof}
 Assume that $\delta_{\Omega}(x,y)\leq  R(\di_{\Omega}(x)\wedge \di_{\Omega}(y))$. Then we have by the $(c,R)$-diameter Gehring-Hayman property that $\diam(\Gamma)\leq cR \di_{\Omega}(x)$ for any quasihyperbolic geodesic $\Gamma$ joining $x$ and $y$. Fix such a geodesic $\Gamma$. The ball separation condition implies that, for any $z\in\Gamma$, there exists a $z'\in\gamma$ such that $\lambda_{\Omega}(z,z')\leq c_0\di_{\Omega}(z)$. Along with our assumption for the curve $\gamma$, it follows that $$\di_{\Omega}(z)\geq \frac{1}{M(c_0+1)}\di_{\Omega}(x),$$ and a similar estimate then follows for the edge-lengths of the Whitney cubes intersecting $\Gamma$. Since all these cubes lie in a ball of radius at most $2R \di_{\Omega}(x)$, by the upper bound on the diameter of $\Gamma$, there are at most $c'(c,R,M,n)$ of these. This gives an upper bound for $k_{\Omega}(x,y)$. 
\end{proof}

\begin{lem}\label{local_dia_implies_len_2}
 Let $M>0$ be a given number. Suppose $\Omega$ has the $(c,R)$-diameter Gehring-Hayman property and the $c_0$-ball separation property. Let $x,y\in\Omega$ be such that 
  $$\frac{1}{R}\di_{\Omega}(x)\leq \di_{\Omega}(y)\leq R\di_{\Omega}(x)$$ and  $$\lambda_{\Omega}(x,y)\leq R(\di_{\Omega}(x)\wedge \di_{\Omega}(y)).$$
 Suppose there exists a curve $\gamma\subset\Omega$ joining $x$ and $y$ such that for each $z\in\gamma$, $\di_{\Omega}(z)\geq (\di_{\Omega}(x)\wedge \di_{\Omega}(y))/M$. Then, $$l(\Gamma_{xy})\leq c(c,R,M,n)\lambda_{\Omega}(x,y)$$ where $\Gamma_{xy}$ is any quasihyperbolic geodesic joining $x$ and $y$ in $\Omega$. 
\end{lem}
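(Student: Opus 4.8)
The plan is to separate a ``soft'' estimate valid for the quasihyperbolic metric of any domain from the ``hard'' input supplied by Lemma~\ref{local_dia_implies_len}. Assume, as we may by the symmetry of the hypotheses and conclusion in $x$ and $y$, that $\di_{\Omega}(x)\leq\di_{\Omega}(y)$, so $\di_{\Omega}(x)=\di_{\Omega}(x)\wedge\di_{\Omega}(y)$. First I would record a generic bound for the Euclidean length of any quasihyperbolic geodesic $\Gamma=\Gamma_{xy}$ in terms of $k_{\Omega}(x,y)$: parametrizing $\Gamma$ by arclength from $x$ and using that $\di_{\Omega}$ is $1$-Lipschitz gives $\di_{\Omega}(\Gamma(s))\leq\di_{\Omega}(x)+s$, and since $\int_{\Gamma}|dz|/\di_{\Omega}(z)=k_{\Omega}(x,y)$ for a geodesic,
\[
k_{\Omega}(x,y)\;\geq\;\int_{0}^{l(\Gamma)}\frac{ds}{\di_{\Omega}(x)+s}\;=\;\log\Big(1+\frac{l(\Gamma)}{\di_{\Omega}(x)}\Big),
\qquad\text{hence}\qquad
l(\Gamma)\leq\di_{\Omega}(x)\big(e^{k_{\Omega}(x,y)}-1\big).
\]
It then suffices to bound the right-hand side by a constant multiple of $\lambda_{\Omega}(x,y)$, and I would do this in two regimes according to the size of $\lambda_{\Omega}(x,y)$ relative to $\di_{\Omega}(x)$.

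In the regime $\lambda_{\Omega}(x,y)>\di_{\Omega}(x)/10$, since the diameter of a curve is at most its length we have $\delta_{\Omega}(x,y)\leq\lambda_{\Omega}(x,y)\leq R(\di_{\Omega}(x)\wedge\di_{\Omega}(y))$, so together with the assumed comparability of $\di_{\Omega}(x)$ and $\di_{\Omega}(y)$ and the assumed curve $\gamma$ the hypotheses of Lemma~\ref{local_dia_implies_len} hold, giving $k_{\Omega}(x,y)\leq c'=c'(c,R,M,n)$. Plugging this into the generic bound and using $\di_{\Omega}(x)<10\,\lambda_{\Omega}(x,y)$ yields $l(\Gamma)\leq 10(e^{c'}-1)\lambda_{\Omega}(x,y)$.

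In the complementary regime $\lambda_{\Omega}(x,y)\leq\di_{\Omega}(x)/10$ I would argue directly, using neither the ball separation nor the diameter Gehring--Hayman hypothesis. Choose a curve $\sigma\subset\Omega$ joining $x$ and $y$ with $l(\sigma)$ within a factor $11/10$ of $\lambda_{\Omega}(x,y)$; then every $w\in\sigma$ satisfies $|w-x|\leq l(\sigma)\leq\tfrac{11}{100}\di_{\Omega}(x)$, so $\di_{\Omega}(w)\geq\tfrac12\di_{\Omega}(x)$ by $1$-Lipschitz continuity, whence
\[
k_{\Omega}(x,y)\;\leq\;\int_{\sigma}\frac{|dw|}{\di_{\Omega}(w)}\;\leq\;\frac{2\,l(\sigma)}{\di_{\Omega}(x)}\;\leq\;\frac{22\,\lambda_{\Omega}(x,y)}{10\,\di_{\Omega}(x)}\;<\;\log 2 .
\]
Feeding this into the generic bound and using $e^{t}-1\leq 2t$ for $0\leq t\leq\log 2$ gives $l(\Gamma)\leq 2\di_{\Omega}(x)k_{\Omega}(x,y)\leq 5\,\lambda_{\Omega}(x,y)$. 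Combining the two regimes proves the claim with $c=\max\{5,\,10(e^{c'}-1)\}$, which depends only on $c,R,M,n$.

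\textbf{The main obstacle.} The delicate point is that Lemma~\ref{local_dia_implies_len} only controls $k_{\Omega}(x,y)$, and a bound on $k_{\Omega}(x,y)$ alone yields, through the generic length estimate, only $l(\Gamma_{xy})\leq C\,\di_{\Omega}(x)$; this is far too weak when $\lambda_{\Omega}(x,y)$ is much smaller than $\di_{\Omega}(x)$, since then the geodesic genuinely is short while $\di_{\Omega}(x)$ need not be. Isolating that small-$\lambda_{\Omega}$ regime and treating it by the elementary argument above is precisely what repairs this deficiency; the particular threshold $1/10$ plays no role beyond being small enough to make $e^{t}-1\leq 2t$ applicable.
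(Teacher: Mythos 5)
Your proof is correct and follows the paper's overall skeleton: both reduce to $k_{\Omega}(x,y)\leq c'(c,R,M,n)$ via Lemma~\ref{local_dia_implies_len} (using $\delta_{\Omega}(x,y)\leq\lambda_{\Omega}(x,y)$) and then convert the quasihyperbolic bound into a Euclidean length bound, splitting according to the size of $\lambda_{\Omega}(x,y)$ relative to $\di_{\Omega}(x)$. The only substantive difference is in the conversion step and the treatment of the near regime: the paper counts Whitney cubes met by $\Gamma$ and invokes the diameter Gehring--Hayman property a second time to bound their edge lengths by a multiple of $\di_{\Omega}(x)$, and then writes only ``we may assume $\lambda_{\Omega}(x,y)\geq\tfrac12\di_{\Omega}(x)$'', whereas your soft arclength estimate $l(\Gamma)\leq\di_{\Omega}(x)\bigl(e^{k_{\Omega}(x,y)}-1\bigr)$ needs no further geometric hypothesis and your explicit handling of the case $\lambda_{\Omega}(x,y)\ll\di_{\Omega}(x)$ fills in exactly what the paper leaves implicit.
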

\begin{proof}
 Since $\delta_{\Omega}(x,y)\leq\lambda_{\Omega}(x,y)$, we have by the previous lemma that $k_{\Omega}(x,y)\leq c'(c,R,M,n)$. 
 
 We may assume that $\lambda_{\Omega}(x,y)\geq \frac{1}{2}\di_{\Omega}(x)$. Fix a quasihyperbolic geodesic $\Gamma$ joining $x$ and $y$. Note that $l(\Gamma\cap Q)\leq 5l(Q)$, for each Whitney cube $Q$; see \cite{HeiRoh} for example. Thus, we get that $$l(\Gamma)\leq c'\cdot5\cdot 2R\di_{\Omega}(x)\leq 20c'R\lambda_{\Omega}(x,y)$$since the number of Whitney cubes intersecting $\Gamma$ is comparable with $k_{\Omega}(x,y)$ and their sizes are comparably smaller that $\di_{\Omega}(x)$. 
\end{proof}
Thus the local diameter Gehring-Hayman is \textit{a priori} a stronger condition than the length counterpart, in the sense of the above lemma when the points $x$ and $y$ are chosen correctly.
In the next definition we introduce the class of domains for which we show the $W^{k,\infty}$-density to hold.

\begin{defn}[$(c_0,c,R)$-radially hyperbolic]\label{approx_domain}
 Let $c_0,c\geq 1,\;R\geq 0$. We say that a domain $\Omega\subsetneq\R^n$ is $(c_0,c,R)$-radially hyperbolic with center $x_0$, if it has the $c_0$-ball separation property and there exists $x_0\in\Omega$ so that for any $x_0\neq x\in\Omega$ and any quasihyperbolic geodesic $\Gamma_x$ joining $x_0$ to $x$, the following are true: 
 \begin{enumerate}
 
  \item[(\textit{i})]
  $\Gamma_x$ has the $(c,R)$-diameter Gehring-Hayman property (from Definition \ref{local_GeHa}).
  
  \item[(\textit{ii})]
  Whenever $x_0\neq y\in\Omega$ and $\Gamma_y$ is a quasihyperbolic geodesic from $x_0$ to $y$ such that $(\Gamma_x\cap \Gamma_y)\setminus\{x_0\}\neq \emptyset$, the set $\Gamma_x\cup \Gamma_y$  satisfies either of the two: 
  \begin{enumerate}
  \item[(a)] the $(c,R)$-length Gehring-Hayman property,  
  \item[(b)] the $(c,R)$-diameter Gehring-Hayman property. \end{enumerate} 
 \end{enumerate}
\end{defn}
We will refer to the curves $\Gamma_x$ for $x_0\neq x\in\Omega$ as radial geodesics. We will say $\Omega$ is $(c_0,c,R)$-radially hyperbolic when reference to the center $x_0$ is not required or when $x_0$ is fixed and understood. 
\begin{rem}
\begin{enumerate}We note the following.
\item[(\textit{i})]It is clear that a $(c_0,c,R)$-radially hyperbolic domain is also $(c_0,c,R')$-radially hyperbolic whenever $R>R'$.
\item[(\textit{ii})]When the geodesics are unique the second requirement of above definition becomes vacuous. An example of this class would have to be simply connected as noted previously; a planar simply connected domain for example.
\item[(\textit{iii})]\label{equiv_1}If we have that the $(c,R)$-diameter Gehring-Hayman property holds for all relevant pairs of points in the second requirement, then we get the first requirement as an implication of it. This need not be the case in general, and therefore we specify the first requirement separately.
\item[(\textit{iv})]It follows from Theorem \ref{Gromovhyperbolic_implies} and Lemma \ref{diameter_GeHa} below that a $\delta$-Gromov Hyperbolic domain is a $(c_0,c,\infty)$-radially hyperbolic domain (for any choice of center), where $c=c(\delta,n)$ and $c_0=c_0(\delta,n)$. We do not know if the converse is true as the Gehring-Hayman conditions above are required to hold only radially.  Also note that when $R$ is close enough to zero, say $R<1/2$, the domains defined above may only have the ball separation property, as the local Gehring-Hayman conditions above become vacuous. We would like to know if for some $R\in(0,\infty]$, $(c_0,c,R)$-radially hyperbolic domains are Gromov hyperbolic.  
 
%However, we later show in fact that (see Remark \ref{equiv_2}) in the above class of domains the second requirement holds with the diameter Gehring-Hayman condition for all relevant pairs $x,y$, that is requiring only the second of the two conditions with the diameter Gehring-Hayman condition for all relevant pairs $x,y$, actually defines the same class of domains.
\end{enumerate}
\end{rem}

%The first requirement may be seen as an elementary consequence of simply connectedness. Although this follows from the results in Section \ref{Hyperbolic}, we sketch here an argument that is elementary.  \textit{sketch}

\subsection{Approximating polynomial}\label{approx_poly}
Let $v\in W_{loc}^{k,p}(\Omega)$.
For each measurable set $E\Subset\Omega$, let us denote by $P_E=P(v;k,E)$ the unique polynomial of order $k-1$ such that 
$$
\int_E \nabla^{\alpha}(v-P_E)=0,
$$ for all multi-indices $\alpha$ such that $|\alpha|\leq k$ (see Jones \cite[page 79]{J}).

We note the following general result.
\begin{lem}[Lemma 2.1, \cite{J}] \label{norm_equivalence}
Let $Q$ be any cube in $\R^n$ and $P$ be a polynomial of degree $k$ defined in $\R^n$. Let $E,F\subset Q$ be such that $|E|,|F|>\eta |Q|$ where $\eta>0$. Then
$$\|P\|_{L^p(E)}\leq C(\eta, k)\|P\|_{L^p(F)}.$$
\end{lem}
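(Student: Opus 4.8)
The plan is to reduce to a fixed cube by an affine change of variables and then argue by contradiction using compactness in the finite-dimensional space of polynomials of degree at most $k$.

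First I would normalise the cube. Let $Q_0=[0,1]^n$ and let $\Phi$ be the affine bijection $x\mapsto (x-x_Q)/l(Q)$ carrying $Q$ onto $Q_0$, where $x_Q$ is a corner of $Q$. The correspondence $P\mapsto P\circ\Phi^{-1}$ takes polynomials of degree at most $k$ to polynomials of degree at most $k$, takes $E,F$ to sets $\widetilde E,\widetilde F\subset Q_0$ with $|\widetilde E|=|E|/|Q|>\eta$ and $|\widetilde F|=|F|/|Q|>\eta$, and multiplies both $\|P\|_{L^p(E)}$ and $\|P\|_{L^p(F)}$ by the same factor $|Q|^{1/p}$. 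Hence it is enough to produce a constant $C=C(\eta,k)$ with $\|P\|_{L^p(E)}\le C\,\|P\|_{L^p(F)}$ whenever $P$ has degree at most $k$ and $E,F\subset Q_0$ satisfy $|E|,|F|>\eta$.

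Suppose no such $C$ exists. Then for each $j\in\mathbb{N}$ there are a polynomial $P_j$ of degree at most $k$ and sets $E_j,F_j\subset Q_0$ with $|E_j|,|F_j|>\eta$ and $\|P_j\|_{L^p(E_j)}>j\,\|P_j\|_{L^p(F_j)}$; in particular $P_j\not\equiv 0$, so after scaling we may assume $\|P_j\|_{L^p(Q_0)}=1$. Since polynomials of degree at most $k$ form a finite-dimensional space on which $\|\cdot\|_{L^p(Q_0)}$ and $\|\cdot\|_{L^\infty(Q_0)}$ are comparable norms, the family $\{P_j\}$ is bounded, hence precompact; passing to a subsequence (not relabelled) we get $P_j\to P$ uniformly on $Q_0$, where $P$ has degree at most $k$ and, by continuity of $\|\cdot\|_{L^p(Q_0)}$, satisfies $\|P\|_{L^p(Q_0)}=1$, so $P\not\equiv 0$. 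On the one hand $\|P_j\|_{L^p(F_j)}\le \tfrac1j\,\|P_j\|_{L^p(E_j)}\le \tfrac1j\,\|P_j\|_{L^p(Q_0)}=\tfrac1j\to 0$. On the other hand, a nonzero polynomial vanishes on a set of Lebesgue measure zero, so the sublevel sets $\{x\in Q_0:|P(x)|<\tau\}$ decrease to the null set $\{P=0\}$ as $\tau\downarrow 0$, and by continuity from above there is $\tau>0$ with $|\{x\in Q_0:|P(x)|<\tau\}|<\eta/2$. Then $|\{x\in F_j:|P(x)|\ge\tau\}|\ge |F_j|-\eta/2>\eta/2$, and by uniform convergence $|P_j|\ge\tau/2$ on this set for all large $j$, so $\|P_j\|_{L^p(F_j)}^p\ge (\tau/2)^p\,\eta/2>0$. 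This contradicts $\|P_j\|_{L^p(F_j)}\to 0$, and the lemma follows. (For $p=\infty$ the final estimate reads $\|P_j\|_{L^\infty(F_j)}\ge\tau/2$ and the argument is unchanged.)

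I expect the only delicate point to be the \emph{uniformity} of the constant in $E$ and $F$. For each fixed $E$ of positive measure the map $P\mapsto\|P\|_{L^p(E)}$ is a norm on the finite-dimensional polynomial space, hence comparable to $\|\cdot\|_{L^p(Q_0)}$; but the comparison constant could a priori degenerate as $E$ is allowed to shrink, and it is exactly the compactness argument above — combined with the measure bound $|E|,|F|>\eta$ and the fact that a nonzero polynomial cannot be uniformly small on a set of measure exceeding $\eta/2$ — that prevents this and produces a constant depending only on $\eta$ and $k$.
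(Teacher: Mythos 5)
Your proof is correct. Note, however, that the paper does not prove this lemma: it is stated as a quotation of Lemma 2.1 from Jones \cite{J} and used as a black box, so there is no in-paper argument to compare against. Your normalise-to-the-unit-cube-then-argue-by-compactness route is the standard way to establish this kind of ``equivalence of $L^p$-norms over large subsets'' statement for a finite-dimensional space of functions, and all the steps check out: the affine rescaling multiplies both sides by the same factor $|Q|^{1/p}$; the normalisation $\|P_j\|_{L^p(Q_0)}=1$ is legitimate because the contradiction hypothesis forces $P_j\not\equiv 0$; precompactness of the unit sphere in the finite-dimensional polynomial space gives a nonzero uniform limit $P$; the sublevel-set estimate $|\{|P|<\tau\}|<\eta/2$ follows from continuity from above since $\{P=0\}$ is null; and the lower bound $\|P_j\|_{L^p(F_j)}^p\ge (\tau/2)^p\,\eta/2$ contradicts $\|P_j\|_{L^p(F_j)}\to 0$. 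One small cosmetic remark: the constant you produce genuinely depends also on $n$ and $p$ (through the norm-equivalence constants on the polynomial space and the exponent in the final estimate), though this is consistent with the lemma as stated, where $n$ and $p$ are fixed ambient data.
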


Let $Q,Q'\in\mathcal{W}$ be Whitney cubes where $\mathcal{W}$ is the Whitney decomposition of a domain $\Omega\subset\R^n$, such that there is a chain of $N_0$ Whitney cubes $\{Q=Q_1,Q_2,\ldots,Q_{N_0}=Q'\}\subset\mathcal{W}$ forming a continuum joining $Q$ and $Q'$ such that consecutive cubes from the collection intersect in faces. Then we have the following estimate from \cite{J}, which follows via chaining, using the Poincar\'e inequality.

\begin{lem}[Lemma 3.1, \cite{J}]\label{chaining}
 Fix $\alpha$ such that $|\alpha|\leq k$ and let $v\in W_{loc}^{k,p}(\Omega)$. Then for any pair of Whitney cubes as above, we have 
 $$
 \|\nabla^{\alpha}(P_Q-P_{Q'})\|_{L^p(Q)}\leq C(n, N_0)l(Q)^{k-|\alpha|}\|\nabla^kv\|_{L^p(\cup_{i=1}^{N_0}Q_i)}
 $$
 where $|\nabla^kv|$ is the $\ell_2$-norm of the vector $\{\nabla^{\alpha}v\}_{|\alpha|=k}$. 
\end{lem}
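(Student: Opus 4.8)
The bound is proved by the usual telescoping/chaining argument; the one delicate point is the passage between polynomials normalized on \emph{different} cubes, which I would handle by combining the algebraic Lemma~\ref{norm_equivalence} with an iterated Poincar\'e inequality on the union of two adjacent Whitney cubes. First I would reduce to a single link. Telescoping along the chain gives $P_Q-P_{Q'}=\sum_{i=1}^{N_0-1}(P_{Q_i}-P_{Q_{i+1}})$, so after applying $\nabla^\alpha$ and the triangle inequality it suffices to control each $\|\nabla^\alpha(P_{Q_i}-P_{Q_{i+1}})\|_{L^p(Q)}$. Since consecutive touching Whitney cubes have side lengths in ratio $[1/4,4]$, every cube of the chain has side length comparable to $l(Q)$ with constant depending only on $N_0$, the whole chain lies in a single cube $R_0\subset\R^n$ with $l(R_0)\le C(n,N_0)\,l(Q)$, and each $Q_j$ has measure at least $c(n,N_0)\,|R_0|$. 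Applying Lemma~\ref{norm_equivalence} to the polynomial $\nabla^\alpha(P_{Q_i}-P_{Q_{i+1}})$ inside $R_0$ therefore lets me replace $\|\cdot\|_{L^p(Q)}$ by $\|\cdot\|_{L^p(Q_i)}$ at the price of a constant $C(n,k,N_0)$. Hence it is enough to prove the single-link estimate: for two Whitney cubes $S,T$ meeting in a face,
\[
\|\nabla^\alpha(P_S-P_T)\|_{L^p(S)}\le C(n,k)\,l(S)^{k-|\alpha|}\,\|\nabla^k v\|_{L^p(S\cup T)} ;
\]
summing the $N_0-1$ links and using $l(Q_i)\le 4^{N_0}l(Q)$ together with $\|\nabla^k v\|_{L^p(Q_i\cup Q_{i+1})}\le\|\nabla^k v\|_{L^p(\cup_j Q_j)}$ then yields the lemma with $C=C(n,k,N_0)$.

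For the single link, set $V:=\mathrm{int}(\overline S\cup\overline T)$. Because adjacent Whitney cubes have comparable side lengths, $V$ is a John domain whose constant depends only on $n$, with $\diam V\simeq l(S)\simeq l(T)$, and $\overline V\subset\Omega$ since closures of Whitney cubes are compactly contained in $\Omega$; in particular $v\in W^{k,p}(V)$ and $P_V:=P(v;k,V)$ is defined, and $\|\nabla^k v\|_{L^p(V)}=\|\nabla^k v\|_{L^p(S\cup T)}$. By the defining properties of $P_E$, the functions $\nabla^\beta(v-P_E)$ have vanishing integral over $E$ for all $|\beta|\le k-1$, while $\|\nabla^k(v-P_E)\|_{L^p(E)}\le C\|\nabla^k v\|_{L^p(E)}$, for $E\in\{S,T,V\}$. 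Iterating the $p$-Poincar\'e inequality on the cube $S$, on the cube $T$, and on the John domain $V$ (valid in John domains; see \cite{Bo}) thus gives, for all $|\alpha|\le k$,
\[
\|\nabla^\alpha(v-P_E)\|_{L^p(V)}\le C(n,k)\,l(S)^{k-|\alpha|}\,\|\nabla^k v\|_{L^p(V)},\qquad E\in\{S,T,V\},
\]
where for $E\in\{S,T\}$ one first bounds $\|\nabla^\alpha(v-P_E)\|_{L^p(E)}$ by Poincar\'e on $E$ and then moves the polynomial $\nabla^\alpha(P_E-P_V)$ from $E$ to $V$ using Lemma~\ref{norm_equivalence} and the $E=V$ estimate. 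Writing $P_S-P_T=(v-P_T)-(v-P_S)$, using Lemma~\ref{norm_equivalence} to pass from $S$ to the smallest cube of $\R^n$ containing $V$ and back to $V$, and invoking the three estimates above, I obtain
\[
\|\nabla^\alpha(P_S-P_T)\|_{L^p(S)}\le C\big(\|\nabla^\alpha(v-P_S)\|_{L^p(V)}+\|\nabla^\alpha(v-P_T)\|_{L^p(V)}\big)\le C(n,k)\,l(S)^{k-|\alpha|}\,\|\nabla^k v\|_{L^p(S\cup T)},
\]
which is the required single-link bound.

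The main obstacle is precisely this transfer of normalization between cubes: one cannot apply Poincar\'e on $S$ alone to the function $\nabla^\alpha(v-P_T)$, since it has no vanishing-mean property relative to $S$. The resolution is the interplay between Lemma~\ref{norm_equivalence}, which swaps the domain of integration for a fixed polynomial at the cost of a constant depending only on the measure ratios, and the auxiliary polynomial $P_V$ on the connected union $V$, which supplies exactly the vanishing means that make the iterated Poincar\'e inequality run. Everything else is bookkeeping of constants, all of which depend only on $n$, $k$ and $N_0$.
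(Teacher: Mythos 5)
The paper states this lemma without proof, citing Jones \cite{J} and remarking only that it follows by chaining via the Poincar\'e inequality. Your reconstruction---telescoping along the chain, transferring norms between cubes with Lemma~\ref{norm_equivalence}, and handling each adjacent pair $(S,T)$ through the auxiliary polynomial $P_V$ on $V=\mathrm{int}(\overline S\cup\overline T)$ together with iterated Poincar\'e---is correct and is precisely the standard chaining argument that the citation points to.
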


\section{Density in the $L^{k,p}$-norm}\label{homogeneous_density}
                                                                                                       
In this section we prove the following density result.                                                                                                    
 \begin{thm}\label{main_1}
 Let $\Omega\subset \R^n$ be a bounded $c_0$-ball separation domain. Then, given $c\geq 1$, there exists $R_0=R_0(c_0,c,n)$ such that, if $\Omega$  is $(c_0,c,R)$-radially hyperbolic for any $R\geq R_0$, we have that $ C^{\infty}(\Omega)\cap W^{k,\infty}(\Omega)$ is dense in $L^{k,p}(\Omega)$.
\end{thm}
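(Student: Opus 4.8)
The plan is to construct the approximating functions by an inner extension of smooth approximations defined on an exhausting sequence of compact subsets, following the strategy of \cite{KRZ} and \cite{NRS}. Fix $u\in L^{k,p}(\Omega)$ and $\epsilon>0$. First I would choose, using the radial geodesics $\Gamma_x$ emanating from the center $x_0$, a suitable exhaustion $\Omega_1\Subset\Omega_2\Subset\cdots$ of $\Omega$ by compactly contained subdomains, where $\Omega_j$ is a union of Whitney cubes; the key point in the choice is that the "frontier" Whitney cubes of $\Omega_j$ (those meeting $\partial\Omega_j$ from inside) should be organized so that each such cube $Q$ lies on a radial geodesic at a controlled quasihyperbolic distance from $x_0$, and so that one controls the overlap of these geodesics. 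On each $\Omega_j$ one mollifies $u$ to get $u_j\in C^\infty(\overline{\Omega_j})$ with $\|\nabla^k(u-u_j)\|_{L^p(\Omega_j)}$ small. The approximant $v$ will agree with $u_j$ (suitably patched via a partition of unity) on the annular region $\Omega_j\setminus\Omega_{j-1}$, and on the remaining part of $\Omega$ near $\partial\Omega$ one propagates inward along radial geodesics using the polynomials $P_Q=P(u;k,Q)$ from Section \ref{approx_poly}: on each frontier Whitney cube $Q$ one uses $P_Q$, and these local polynomial pieces are glued with a Whitney-type partition of unity subordinate to a cover of $\Omega\setminus\Omega_{j_0}$ by dilated Whitney cubes.

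The heart of the estimate is to bound $\|\nabla^k v\|_{L^p(\Omega\setminus\Omega_{j_0})}$ (and, by absolute continuity of the integral, make it $<\epsilon$ for $j_0$ large). Differentiating the partition-of-unity sum $k$ times produces, on a Whitney cube $Q$, terms of the form $\nabla^j(\text{cutoff})\cdot\nabla^{k-j}(P_{Q'}-P_{Q''})$ for neighbouring cubes $Q',Q''$, with the cutoff derivatives of size $l(Q)^{-j}$. I would control each such difference of polynomials by chaining along a radial geodesic back toward $\Omega_{j_0}$: Lemma \ref{chaining} gives $\|\nabla^\alpha(P_Q-P_{Q'})\|_{L^p(Q)}\lesssim l(Q)^{k-|\alpha|}\|\nabla^k u\|_{L^p(\text{chain})}$ provided the chain length $N_0$ is uniformly bounded, and Lemma \ref{norm_equivalence} lets one transfer polynomial norms between comparable cubes. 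The uniform bound on the number of Whitney cubes in such a chain is exactly where the hypotheses enter: the ball separation property together with the $(c,R)$-diameter (or length) Gehring-Hayman property along $\Gamma_x$ (and along $\Gamma_x\cup\Gamma_y$ for intersecting radial geodesics) forces the radial geodesic between two boundary-comparable points to have controlled length/diameter, hence to pass through a bounded number of Whitney cubes — this is the content of Lemmas \ref{local_dia_implies_len} and \ref{local_dia_implies_len_2} and of the diameter Gehring-Hayman theorem \ref{diameter_GeHa} referenced in the introduction. The threshold $R_0=R_0(c_0,c,n)$ appears because one needs $R$ large enough (relative to $c_0$ and the Whitney constants) that the relevant pairs of points along the construction actually satisfy the hypotheses $\Delta_\Omega(x,y)\le R$ or $\Lambda_\Omega(x,y)\le R$ triggering the local Gehring-Hayman conclusions.

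Once $\|\nabla^k v\|_{L^p(\Omega\setminus\Omega_{j_0})}<\epsilon$ and $v=u_{j_0}$ is smooth with bounded derivatives on the (compactly contained, finitely many cubes) core, and $\|\nabla^k(u-v)\|_{L^p(\Omega_{j_0})}<\epsilon$ by the mollification, the triangle inequality gives $\|\nabla^k(u-v)\|_{L^p(\Omega)}\lesssim\epsilon$. It remains to check that $v\in C^\infty(\Omega)\cap W^{k,\infty}(\Omega)$: smoothness is automatic since $v$ is a locally finite sum of smooth cutoffs times polynomials (or the smooth $u_{j_0}$); and $v\in W^{k,\infty}$ because on each Whitney cube $Q$ the construction gives $|\nabla^k v|\lesssim \sum_{j\le k} l(Q)^{-j}\|\nabla^{k-j}(\text{polynomial differences})\|_{L^\infty(Q)}$, which by the polynomial chaining and Lemma \ref{norm_equivalence} is bounded in terms of averaged $L^p$-norms of $\nabla^k u$ over finitely many cubes, hence finite (though not uniformly in $Q$ — but $W^{k,\infty}$ only requires boundedness on $\Omega$, which holds since $\Omega$ is bounded and $v$ is smooth up to near the boundary with the local bounds above; more carefully, one gets a global $L^\infty$ bound because the $P_Q$ are bounded on the bounded set $\Omega$). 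I expect the main obstacle to be the combinatorial/geometric bookkeeping of the second requirement in Definition \ref{approx_domain}: when two radial geodesics $\Gamma_x$ and $\Gamma_y$ branch, one must chain across the branch point, and the hypothesis only guarantees a Gehring-Hayman bound on $\Gamma_x\cup\Gamma_y$ (in either the length or diameter form, not known to be equivalent), so the chaining argument has to be set up to work with whichever of the two holds, using Lemma \ref{local_dia_implies_len_2} to pass from the diameter to the length version when there happens to be an auxiliary curve staying away from the boundary — which in our exhaustion is provided by going through the core $\Omega_{j_0}$.
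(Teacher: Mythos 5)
Your high-level plan — extend inward from a compact core using the approximating polynomials $P_Q$, glue with a partition of unity, and use chaining (Lemmas \ref{norm_equivalence}, \ref{chaining}) together with the radial Gehring-Hayman hypotheses and Lemmas \ref{local_dia_implies_len}, \ref{local_dia_implies_len_2} to control polynomial oscillation along radial geodesics — is indeed the strategy of the paper. But there are two genuine gaps in the way you set up the construction.

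The first is the choice of where the cutoffs live. You propose gluing polynomial pieces ``with a Whitney-type partition of unity subordinate to a cover of $\Omega\setminus\Omega_{j_0}$ by dilated Whitney cubes.'' If the cover involves Whitney cubes of all scales down to $\partial\Omega$, the cutoff derivatives are of order $l(Q)^{-j}$, which blows up near the boundary, and then nothing you do with the polynomials can keep $\nabla^k v$ bounded. The paper's construction avoids this precisely by assigning a \emph{single} polynomial $P_i=P_{Q_i}$ to each entire ``tentacle'' $B_i$ that reaches the boundary, so the only cutoff transitions occur at the fixed scale $2^{-m}$ of the boundary cubes $\mathcal{P}_m$ (this is property (\textit{iii}) of the partition of unity: $\|\nabla^\alpha\varphi_i\|_\infty\lesssim 2^{m|\alpha|}$). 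Getting this to work requires the combinatorics in Section \ref{Whitney's}: pruning the blocking cubes ($\mathcal{P}_m^{(-1)}$), relabeling components into thick pockets $U_i$ versus thin tentacles $V_i$ (Lemma \ref{relabel}), bounding $\#\mathcal{V}_i\le M(c_0,n)$ so a tentacle is blocked by only boundedly many cubes, and grouping overlapping tentacle-collections into the $\mathcal{Q}_i$ and $B_i$ (Lemmas \ref{bound_belong}, \eqref{bounded_overlap}) so that the partition of unity has finite overlap. None of this bookkeeping appears in your proposal, and it is precisely what makes the cutoff derivatives uniformly controlled.

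The second gap is your argument that $v\in W^{k,\infty}(\Omega)$. You write that ``one gets a global $L^\infty$ bound because the $P_Q$ are bounded on the bounded set $\Omega$.'' This is not sufficient: differentiating the partition-of-unity sum produces terms $\nabla^{\alpha-\beta}(\text{cutoff})\cdot\nabla^{\beta}(P_{Q'}-P_{Q''})$, and boundedness of the $P_Q$ themselves says nothing about these cross terms. The correct argument, as in equation \eqref{analyst's trick} of the paper, is that since the cutoffs sum to $1$ one can subtract a fixed $P_Q$ from each polynomial in the sum, so what appears is $\nabla^{\alpha-\beta}(\text{cutoff})\cdot\nabla^{\beta}(P_{Q'}-P_Q)$; the cutoff derivative is $\lesssim 2^{m(|\alpha|-|\beta|)}$ (not $l(Q')^{-\cdot}$), and the polynomial difference is controlled by Lemma \ref{chaining} once the chain length is uniformly bounded via Lemmas \ref{mainlem_1}--\ref{overlap_2}. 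Without the single-polynomial-per-tentacle structure, this bound would fail, and your proposed $v$ need not lie in $W^{k,\infty}(\Omega)$.

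One smaller discrepancy: the paper first reduces to $u\in C^\infty(\Omega)$ using the known density of smooth functions in $L^{k,p}(\Omega)$, and then defines $u_m$ using $u$ itself on the core $\bigcup U_i$ (the term $\sum_i\xi_i u$); there is no mollification $u_j$ on $\Omega_j$ and no annular patching $\Omega_j\setminus\Omega_{j-1}$. Your exhaustion picture can probably be made to work, but it adds an unnecessary layer and obscures the point that the core contribution to the error vanishes identically where all $\xi_i\equiv1$.
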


Let $\epsilon>0$ be fixed. Let $u\in L^{k,p}(\Omega)$ be given. We will define a function $u_{\epsilon}\in W^{k,\infty}(\Omega)\cap L^{k,p}(\Omega)\cap C^{\infty}(\Omega)$ such that $\|u-u_{\epsilon}\|_{L^{k,p}(\Omega)}\lesssim\epsilon$.
We begin by constructing a suitable decomposition of $\Omega$ into subsets with finite overlap.

%We will use the values of $u$ on a large enough compact set to define the function $u_{\epsilon}$ via an inner Whitney extension. To this end we define a suitable decomposition of the domain that utilises the $c_0$-ball separation property of the domain. We use a polynomial approximation used by Jones \cite{J}, to assign values to points outside the compact set. Then we show that the local Gehring-Hayman condition along with the separation condition make it possible to combine the values from the subsets in the decomposition using a partition of unity, for obtaining an approximating function.
 
%Our construction utilizes the separation condition as in \cite{KRZ} where a Lipschitz partition of unity is used.  We need however to work with a smooth partition of unity for the higher order case $k>1$ and require some modifications as we use only the local/radial versions of some conditions.

\subsection{A decomposition of $\Omega$}\label{Whitney's}
In this section let $\Omega\subsetneq \R^n$ be a domain with the $\frac{c_0}{10}$-ball separation property for $c_0\geq 10$.
We denote by $\mathcal{W}$ the Whitney decomposition of the domain $\Omega$.

We briefly describe the role of the ball separation condition in the decomposition of $\Omega$. Recall that we want to extend the function from a connected compact `core' (where a smooth aproximation of the function $u$ to be approximated has bounded derivatives of order up to $k$) formed by large Whitney cubes to the rest of the domain in a such a way that the derivatives of the extension are still bounded. Suitable neighbourhoods (to be determined by the separation property) of cubes in the boundary of the core block parts of the remaining domain which appear as `tentacles' separated from other tentacles and the core (see Figure \ref{tentacle}). The points in a tentacle should then receive their values for the extension of the given function from the cube whose neighbourhood separates it from the core and the other tentacles. However, (it may be seen below that) we need to deal with the case when there are more than one possible choices of cubes which block a given such tentacle. In particular, we need the polynomial approximations (Section \ref{approx_poly}) to our function, in cubes whose neighbourhoods intersect, to oscillate in a controlled way with respect to each other, so that the derivatives are not very large. The construction in \cite{KRZ} takes care of this by considering intrinsic neighbourhoods of cubes for blocking the tentacles. The Gehring-Hayman condition in that case ensures that if suitable intrinsic neighbourhoods of two Whitney cubes intersect, for example with a tentacle, then the euclidean length of the quasihyperbolic geodesic joining their centres is at most, say a constant multiple of the diameter of the larger cube. This is then used to obtain suitable estimates for the oscillations. We will however need to work with euclidean neighbourhoods to define a euclidean partition of unity. This is because the intrinsic distance is only (locally) Lipschitz and we need a partition of unity with $k$-derivatives. Thus we modify the construction in \cite{KRZ}, allowing us also to work under slightly weaker hypothesis. We do this below.

%In this case we need to show that the function does not oscillate a lot between the two cubes related this way. The decomposition can however be done in a way that takes care of this.

Let $\Omega_m^{(1)}$ denote the interior of the connected component containing $x_0$ of the set $\cup\{Q:l(Q)\geq 2^{-m}\}$, where $m$ is large enough so that $\Omega_m^{(1)}$ is well defined (contains the point $x_0$). 
Define
$$
 \mathcal{W}_m^{(1)}:=\{Q\in\mathcal{W}:Q\subset \Omega_m^{(1)}\}
$$
and
$$
\mathcal{P}_m^{(1)} :=\{Q\in\mathcal{W}_m^{(1)}:2^{-m}\leq l(Q)<2^{-(m-2)}\}.
$$
Given $Q\in\mathcal{W}$, $\lambda>0$, $\lambda Q$ is the concentric cube with edge length $\lambda$ times that of $Q$. We write $(\lambda Q)_c$ for the component of $\lambda Q\cap \Omega$ that contains $Q$.
Set $$B_Q=\left(\frac{11}{10}c_0Q\right)_c.$$

Given a Whitney cube $Q$ and a set $A$ in $\Omega$, we say that $Q$ \textit{blocks} $A$ if $x_0$ and $A$ lie in different components of $\Omega\backslash \overline{(c_0Q)}_c$. Given another Whitney cube $Q'$, we write $Q|Q'$, if $Q$ blocks the set $B_{Q'}$. Set 
$$\mathcal{P}_m^{(-1)}:=\left\{Q\in\mathcal{P}_m^{(1)}:\exists Q'\in\mathcal{P}_m^{(1)}\;\text{such that}\;  Q'|Q\right\}
$$
and subtract this collection from the original collection to define 
$$\mathcal{P}_m=\mathcal{P}_m^{(1)}\backslash \mathcal{P}_m^{(-1)}.$$
Let us denote the components of $\Omega\backslash \underset{Q\in\mathcal{P}_m}\cup \overline{(c_0Q)_c}$ by $V_i'$ for $i=0,1,2\ldots$ where $V_0'$ is the component containing $x_0$. Set
$$\mathcal{V}_i':=\{Q\in \mathcal{P}_m:\overline{V}_i'\cap \overline{(c_0Q)_c}\neq\emptyset \}$$ for $i=0,1,2,\ldots$ \newline
The member cubes of $\mathcal{V}_i'$ together separate the set $V_i'$ from $x_0$.
%We note that $\mathcal{V}_0'$ is the subcollection of $\mathcal{P}_m$\begin{equation}\label{first_one}\{Q\in\mathcal{P}_m^{(1)}:\text{for every}\;Q'\in\mathcal{P}_m^{(1)},\;Q'|Q\;\text{is false}\}.\end{equation} 
%Next we further modify the collection $\mathcal{U}_m$ and the components $V_i'$ for $i\in\mathbb{N}$ and define new collections and sets $V_i$ which are better suited for our purpose. Towards this end, we assign a cube $Q_i\in\mathcal{U}_m$ to each $V_i'$ such that $Q_i$ blocks $V_i'$.
%Before describing the construction, we collect below the relevant properties of the required construction for future reference. 
We want the sets $V_i'$ for $i>0$ to correspond to the tentacles mentioned above while $V_0'$ corresponds to the core. After that, we would like to find bounds on oscillations of the polynomial approximations to our functions across the cubes in $\mathcal{V}_i$ for each $i>0$. Some of the sets $V_i'$ however may be `pockets' composed of large cubes bounded by neighbourhoods of the cubes in $\mathcal{P}_m$. We need to relabel the sets $V_i'$ in order to exclude such `thick' components and consider them as part of the core instead.

\begin{lem}\label{relabel}

We have a decomposition $$\Omega\backslash \underset{Q\in\mathcal{P}_m}\bigcup \overline{(c_0Q)_c}=\bigcup_{i=0}^{l_m}U_i\;\cup\;\bigcup_{i\geq 1} V_i,$$ where the sets $V_i'$ have been relabeled as $U_i$ or $V_i$ so that the following hold.

 \begin{enumerate}
 \item[(\textit{i})]There are finitely many $V_i'$, denoted and enumerated as \newline $\{U_0=V_0',\ldots,U_{l_m}\}$ having the property that for any Whitney cube $Q$ such that $Q\cap U_{j}\neq \emptyset$ for $j=1,2,\ldots,l_m$ it holds that $l(Q)\geq 2^{-(m-2)}$. We write $\mathcal{U}_i:=\{Q\in \mathcal{P}_m:\overline{U}_i\cap \overline{(c_0Q)_c}\neq\emptyset \}$ for the collection of boundary cubes whose neighbourhoods bound $U_i$ for $i=0,\ldots,l_m$.
 
 \item[(\textit{ii})]The components $V_i'\notin\{U_0,\ldots,U_{l_m}\}$ are relabeled as $\{V_i\}_i$. For each $i$, set $\mathcal{V}_i:=\{Q\in \mathcal{P}_m:\overline{V}_i\cap \overline{(c_0Q)_c}\neq\emptyset \}$ as the boundary cubes whose neighbourhoods bound the set $V_i$. We have that $\#\mathcal{V}_i\leq M$, where $M=M(c_0,n)$.

 %\item[(\textit{iv})]Given $j\in\mathbb{N}$, $V'_j\subset V_i$ for some $i\in\mathbb{N}$, 
 
 %\item[(\textit{v})] If $V_i,V_j\notin\mathcal{U}_m$, if $V_i\cup V_j\neq\emptyset$, then $\overline{(c_0Q_i)}_c\cap \overline{(c_0Q_j)}_c\neq\emptyset$. Thus $\{V_i\notin\mathcal{U}_m\}$ have bounded intersections. 

 %\item[(\textit{vi})] The sets $V_0$ and $U_i$ for $U_i\in\mathcal{U}_m$ are connected. For $V_i\notin \mathcal{U}_m$, $(\frac{11}{10}c_0Q_i)_c\cup V_i$ are connected.

 %\item[(\textit{vi})]We write $\mathcal{U}_m$ to be the subcollection of cubes in $\mathcal{P}_m$ of the form $Q_i$ which separate from $x_0$, the sets $V_i$, for $V_i\notin\mathcal{U}_m$ (see (\textit{iii})). We write $\mathcal{U}_m'$ for those $Q\in\mathcal{P}_m\setminus\mathcal{U}_m$ such that $(c_0Q)_c\not\subset V_i$, for any $i\in\mathbb{N}$. For $Q_i\in\mathcal{U}_m$ redefine $B_{Q_i}$ to mean the component containing $Q_i$ of the set
 %$B\left(\left(c_0Q_i\right)_c\cup V_i,\frac{2^{-m}}{100}\right)\cap\Omega.$ We have
%\begin{equation}\label{bounded_overlap}\ind_{B(V_0,2^{-m}/100)}(x)+\sum_{Q\in\mathcal{U}_m\cup\mathcal{U}_m'}\ind_{B_Q}(x)+\sum_{i=1}^{l_m}\ind_{B(U_i,2^{-m}/100))}\leq c'(c_0,c,n),
%\end{equation}for $x\in\Omega$.
 \end{enumerate}
  \end{lem}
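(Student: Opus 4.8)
The plan is to pick the ``thick'' components $U_j$ by hand, to verify the finiteness in (i) by a counting argument using that $\Omega$ is bounded, and to obtain the overlap bound in (ii) from the ball separation property. For the relabeling, call a component $V_i'$ with $i\ge 1$ a \emph{pocket} if every Whitney cube $Q$ with $Q\cap V_i'\ne\emptyset$ satisfies $l(Q)\ge 2^{-(m-2)}$. Put $U_0:=V_0'$, enumerate the pockets as $U_1,\dots,U_{l_m}$, relabel the remaining components as $\{V_i\}_i$, and define $\mathcal{U}_i$ and $\mathcal{V}_i$ as in the statement. Each $V_i$ then meets a Whitney cube $Q_i^*$ of side $<2^{-(m-2)}$, hence contains a point $p_i^*$ with $\di_\Omega(p_i^*)\le C(n)2^{-m}$; this is the only way the sketch uses that $V_i$ is not a pocket.

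For (i) it suffices to see $l_m<\infty$. Since $\Omega$ is bounded, $\mathcal{P}_m$ is finite and there are only finitely many Whitney cubes of side $\ge 2^{-(m-2)}$. Each pocket $U_j$ ($j\ge 1$) is open and non-empty, so it meets the interior of some Whitney cube, which is large by the definition of a pocket; and for a fixed large cube $Q$ the set $Q\setminus\bigcup_{Q'\in\mathcal{P}_m}\overline{(c_0Q')_c}$ — the complement in $Q$ of finitely many closed sets — has finitely many components, so only finitely many components of $\Omega\setminus\bigcup_{Q'\in\mathcal{P}_m}\overline{(c_0Q')_c}$ can meet $Q$. Combining these two facts gives finitely many pockets, and $\mathcal{U}_i$ is defined as in the statement.

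The substance is (ii). Since every cube in $\mathcal{P}_m$ has side in $[2^{-m},2^{-(m-2)})$, bounding $\#\mathcal{V}_i$ by some $M(c_0,n)$ reduces to showing that $\bigcup\mathcal{V}_i$ has Euclidean diameter $\le C(c_0,n)2^{-m}$. Fix $i$; write $V=V_i$, $p^*=p_i^*$, and fix a quasihyperbolic geodesic $\Gamma$ from $x_0$ to $p^*$. The cubes of $\mathcal{V}_i$ jointly separate $p^*\in V$ from $x_0$, so $\Gamma$ meets $\bigcup_{Q\in\mathcal{V}_i}\overline{(c_0Q)_c}$; choose a point $z$ of $\Gamma$ lying in some $\overline{(c_0Q_z)_c}$ with $Q_z\in\mathcal{V}_i$. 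As $z$ lies in the $c_0$-dilate of a cube of side $\asymp 2^{-m}$ and $\di_\Omega$ is $1$-Lipschitz, $\di_\Omega(z)\le C(c_0,n)2^{-m}$; hence the $\tfrac{c_0}{10}$-ball separation property applied to $\Gamma$ at $z$ yields an intrinsic ball $B=B_\Omega\bigl(z,\tfrac{c_0}{10}\di_\Omega(z)\bigr)$ of Euclidean diameter $\le C(c_0,n)2^{-m}$ that separates $x_0$ from $p^*$ in $\Omega$ (the degenerate cases, in which $x_0$ or $p^*$ already lies in $B$, are routine and omitted here). It then remains to show that every $Q\in\mathcal{V}_i$ lies within $C(c_0,n)2^{-m}$ of $z$. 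This is where the removal of $\mathcal{P}_m^{(-1)}$ enters: since no cube of $\mathcal{P}_m$ is blocked by another, for each $Q\in\mathcal{V}_i$ the connected enlarged neighbourhood $B_Q=(\tfrac{11}{10}c_0Q)_c$ — which contains $Q$ and, because $\overline{(c_0Q)_c}$ abuts $\overline V$, also contains a point $y_Q\in V$ — can be joined to $x_0$ inside $\Omega$ avoiding $\overline{(c_0Q_z)_c}$; splicing such a curve with a curve in $V$ from $y_Q$ to $p^*$ produces a path from $x_0$ to $p^*$ which, were $Q$ far from $z$, could be routed to avoid $B$, contradicting the separation. Hence the cubes of $\mathcal{V}_i$ cluster near $z$ and $\#\mathcal{V}_i\le M(c_0,n)$.

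The hard part is the closing step of (ii): making rigorous that the joining curves (the one reaching $B_Q$ from $x_0$ off $\overline{(c_0Q_z)_c}$, and the one through $V$ to $p^*$) can be routed around the small ball $B$ unless $Q$ sits within $C(c_0,n)2^{-m}$ of $z$. This is precisely what the quantitative constant $\tfrac{c_0}{10}$ in the ball separation property, together with the geometric slack built into $B_Q$ (the factor $\tfrac{11}{10}$ and the standing hypothesis $c_0\ge 10$), is meant to deliver; everything else is routine Whitney-cube chaining.
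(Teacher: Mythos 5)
Your relabelling in part (i) is sound, and the finiteness argument for the pockets is fine; it differs cosmetically from the paper (the paper classifies a component $V_i'$ as a pocket when no point of $V_i'$ has a radial geodesic $\Gamma_{z_i}$ meeting a cube of $\mathcal{P}_m^{(1)}$, and shows this forces all Whitney cubes meeting $V_i'$ to be large, which is the defining property you use directly), but either choice produces a relabelling satisfying the statement. The observation that a non-pocket $V_i$ contains a point $p_i^*$ with $\di_\Omega(p_i^*)\lesssim 2^{-m}$ is also correct.

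The genuine gap is exactly where you flag it, and it is not a matter of routine detail. In your closing step you want a path from $x_0$ to $p_i^*$ avoiding $B=B_\Omega\bigl(z,\tfrac{c_0}{10}\di_\Omega(z)\bigr)$, built from (a) a curve joining $x_0$ to $B_Q$ off $\overline{(c_0Q_z)_c}$, and (b) a curve in $V_i$ from $y_Q$ to $p_i^*$. Neither piece is controlled. For (a), $Q_z\nmid Q$ only gives a curve avoiding $\overline{(c_0Q_z)_c}$, and $B\not\subset\overline{(c_0Q_z)_c}$: the radius $\tfrac{c_0}{10}\di_\Omega(z)$ is of the order of $c_0\cdot\mathrm{diam}(Q_z)$, while $z$ can lie on $\partial (c_0Q_z)_c$, so $B$ genuinely protrudes; avoiding the blocking set does not avoid the separating ball. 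For (b), a curve in $V_i$ from $y_Q$ to $p_i^*$ may have to pass through $B$: $V_i$ is precisely the region that $B$ is pinching off, and the hypothesis that $Q$ is far from $z$ gives no leverage over where that curve goes near the neck. So the routing contradiction is not established, and I do not see that the ``slack'' constants ($\tfrac{11}{10}$ and $c_0\ge10$) are enough to rescue it as written.

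The paper avoids routing altogether. It picks $z_i\in V_i'$ with a radial geodesic $\Gamma_{z_i}$ meeting a cube $Q_i\in\mathcal{P}_m^{(1)}$; ball separation applied on $\Gamma_{z_i}$ inside $Q_i$ shows that $\overline{(c_0Q_i)_c}$ (or $\overline{(c_0Q_i^i)_c}$ if $Q_i\notin\mathcal{P}_m$, with $Q_i^i|Q_i$) separates $V_i'$ from $x_0$. Then for each $Q\in\mathcal{V}_i$, the set $\overline{(c_0Q)_c}\cap\Omega$ is connected, touches $\overline{V_i'}$ and is disjoint from $V_i'$'s removal set, so it lies entirely on the non-$x_0$ side of $\overline{(c_0Q_i)_c}$; were the connected set $B_Q\supset\overline{(c_0Q)_c}\cap\Omega$ also entirely on that side, $Q_i$ would block $B_Q$, contradicting $Q\in\mathcal{P}_m$ (no cube of $\mathcal{P}_m^{(1)}$ blocks a cube of $\mathcal{P}_m$). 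Hence $B_Q\cap\overline{(c_0Q_i)_c}\ne\emptyset$, which pins every $Q\in\mathcal{V}_i$ to within $C(c_0,n)2^{-m}$ of $Q_i$, and a volume count gives $\#\mathcal{V}_i\le M(c_0,n)$. The decisive move you are missing is to anchor the clustering at a single separating neighbourhood $\overline{(c_0Q_i)_c}$ and derive $B_Q\cap\overline{(c_0Q_i)_c}\ne\emptyset$ by a ``peek-through'' argument from the blocking relation, rather than by trying to route a competitor curve around a separating ball.
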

\begin{proof}
We begin with the component $V_0'$, containing the point $x_0$. Suppose that there exists a point $z_0\in Q\subset V_0'$ such that $l(Q)< 2^{-(m-2)}$. Then there exists $Q_0\in\mathcal{P}_m^{(1)}$ for which $\Gamma_{z_0}\cap Q_0\neq\emptyset$.  By the ball separation property, either $x_0$ and $z_0$ lie in different components of $\Omega\setminus\overline{(c_0Q_0)}_c$ or $z_0\in\overline{(c_0Q_0)}_c$. Suppose first that $Q_0\in\mathcal{P}_m$. This gives a contradiction to our assumption that $z_0$ and $x_0$ lie in the same component of $\Omega\setminus \underset{Q\in\mathcal{P}_m}\bigcup \overline{(c_0Q)}_c$. If $Q_0\notin \mathcal{P}_m$ then there exists $Q_0^0\in\mathcal{P}_m$ such that $Q_0^0|Q_0$. Since by the ball separtion property either $\overline{(c_0Q_0)}_c$ separates $x_0$ and $z_0$ or $z_0\in\overline{(c_0Q_0)}_c$ and since $z_0\notin \overline{(c_0Q_0^0)}_c$, we have that $z_0$ and $x_0$ are in different components of $\Omega\setminus \underset{Q\in\mathcal{P}_m}\bigcup \overline{(c_0Q)}_c$ which is the same contradiction again. Therefore, there is no point $z_0\in V_0'$ as above. From this we conclude that for every $Q\in\mathcal{W}$ such that $Q\cap V_0'\neq \emptyset$ we have $l(Q)\geq 2^{-(m-2)}$. We set $U_0=V_0'$ and $\mathcal{U}_0=\mathcal{V}_0'$.
 
We continue by induction. Suppose the sets $V_j'$, where $0\leq j\leq i-1$, have been relabelled.
Consider now the component $V_i'$ of $\Omega\setminus \underset{Q\in\mathcal{P}_m}\bigcup \overline{(c_0Q)}_c$. We have the following cases.\newline \underline{Case 1:} 
\newline Suppose first that there exists $z_i\in V_i'$ and $Q_i\in \mathcal{P}_m^{(1)}$ such that $\Gamma_{z_i}\cap Q_i\neq\emptyset$. It follows from the ball separation property that $x_0$ and $z_i$ are not in the same component of $\Omega\setminus\overline{(c_0Q_i)}_c$.  We consider the following subcases.

\underline{Case 1.1} \;$\overline{(c_0Q_i)}_c\cap V_i'=\emptyset$.
 \newline Given $Q\in\mathcal{V}_i'$, such that $\overline{(c_0Q_i)}_c\cap \overline{(c_0Q)_c}\neq\emptyset$, we have that $x_0$ and $\overline{(c_0Q)}_c$ are in separate components of $\Omega\setminus\overline{(c_0Q_i)}_c$. By the definition of $\mathcal{P}_m$ it is impossible that $Q_i|Q$. We conclude that $\overline{(c_0Q_i)}_c\cap B_Q\neq\emptyset$.
 %In this case we have that either $x_0$ and $\overline{(c_0Q)}_c$ are in separate components of $\Omega\setminus\overline{(c_0Q_i)}_c$ or $\overline{(c_0Q_i)}_c\cap \overline{(c_0Q)_c}\neq\emptyset$, whenever $Q\in\mathcal{V}_i$ (which tells $\overline{(c_0Q)}_c\cap\overline{V}_i\neq\emptyset$). By the definition of $\mathcal{P}_m$ it is impossible that $Q_i|Q$ and we conclude that in both situations $\overline{(c_0Q_i)}_c\cap B_Q\neq\emptyset$.
%\newline \underline{Case 1.2:} \;$\overline{(c_0Q_i)}_c\cap \partial V_i\neq\emptyset$ and $\overline{(c_0Q_i)}_c\cap  V_i=\emptyset$.
%Given $Q\in\mathcal{V}_i$, such that $\overline{(c_0Q_i)}_c\cap \overline{(c_0Q)_c}\neq\emptyset$, we have that $x_0$ and $\overline{(c_0Q)}_c$ are in separate components of $\Omega\setminus\overline{(c_0Q_i)}_c$, which implies again by the definiton of $\mathcal{P}_m$ that $\overline{(c_0Q_i)}_c\cap B_Q\neq\emptyset$.

\underline{Case 1.2:}\; $\overline{(c_0Q_i)}_c\cap  V_i'\neq \emptyset$.
\newline In this case $Q_i\notin \mathcal{P}_m$. So let $Q_i^i\in\mathcal{P}_m$ be such that $Q_i^i|Q_i$. Then we are back to the previous case. We conclude again that if $Q\in\mathcal{V}_i'$, then $\overline{(c_0Q_i^i)}_c\cap B_Q\neq\emptyset$.
%In the above cases since, since the cubes $Q\in\mathcal{V}_i$  are of comparable sizes and contained in a neighbourhood of either $Q_i$ or $Q_i^i$ (depending on the case) of bounded measure, by the doubling property of Lebesgue measure we get an upperbound on the cardinality; $|\mathcal{V}_i|\leq C$, where $C=C(c_0,n)$.

Since the cubes in $\mathcal{V}_i'$ have comparable sizes, are disjoint and by the above reasoning are contained in a set of measure bounded from above by a constant times the measure of the smallest cube, we get $\#\mathcal{V}_i'\leq M$, where $M=M(c_o,n)$. We set $V_{j_i+1}=V_i'$ and $\mathcal{V}_{j_i+1}=\mathcal{V}_i'$ where $j_i$ is the smallest index such that $V_{j_i}$ has been already defined, (if no such $j_i$ exists, take $j_i=0$).
\newline\underline{Case 2:}
\newline Suppose next that it is not possible to find a point $z_i$ as above. Then we have that for any Whitney cube $Q$ such that $Q\cap V_i\neq\emptyset$, $l(Q)\geq 2^{-(m-2)}$, since if $2^{-(m-2)}> l(Q)$ then we get a contradiction with the assumption that there is no point $z_i$ as above. In this case we set $U_{j_i+1}:=V_i'$ and $\mathcal{U}_{j_i+1}:=\mathcal{V}_i'$, where $j_i$ is the largest index for which $U_{j_i}$ has already been defined. 

This concludes the relabelling.
\end{proof}

%$\mathcal{U}^{(m)}$ for $\underset{i\geq0}\cup \mathcal{U}_i$. 

The sets $\overline{(c_0Q)}_c$ for $Q\in\mathcal{P}_m$, $\overline{V}_i$ and $\overline{U}_j$ provide a decomposition of $\Omega$.
Before we may proceed however, we are faced with the issue that uniformly enlarged neighbourhoods of the tentacles $\{V_i\}_i$ might not have bounded overlap which we require if we have them as sets being used to define a partition of unity. Therefore, in what follows we make some modifications to the decomposition of $\Omega$ in order to take care of this problem. Namely, we group together some of the sets $V_i$ and the neighbourhoods $B_Q$ of the cubes that block them. This provides a decomposition where suitably uniformly enlarged neighbourhoods of the sets in the decomposition have bounded overlap. We need a lemma first.

 Write $\mathcal{V}^{(m)}$ for the union $\underset{i\geq1}\bigcup \mathcal{V}_i$ (see part (\textit{ii}) of the definition of the relabeled sets in Lemma \ref{relabel}). Fix an enumeration $\{Q_1,\ldots,Q_{j_m}\}$ of $\mathcal{P}_m$.

 \begin{lem}\label{bound_belong}
  For each $Q_j\in\mathcal{P}_m$, there exists a collection $\{\mathcal{V}_{j_1},\ldots,\mathcal{V}_{j_{l}}\}$ so that $Q_j\in\mathcal{V}_{j_s}$ for $1\leq s \leq l$, which is maximal in the sense that $\mathcal{V}_{j_s}\not\subset \cup _{i\neq s} \mathcal{V}_{j_i}$ for $1\leq s \leq l$ and $Q_j\in\mathcal{V}_k$ implies $\mathcal{V}_k\subset \bigcup _{1\leq i\leq s} \mathcal{V}_{j_i}$. Moreover, $l$ is bounded above by a constant depending only on $c_0$ and $n$. 
 \end{lem}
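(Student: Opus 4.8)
The plan is to read off the combinatorial statement from a volume (measure) counting argument, exactly in the spirit of the bound $\#\mathcal{V}_i\leq M$ obtained in Lemma \ref{relabel}. Fix $Q_j\in\mathcal{P}_m$. First I would collect all indices $k\geq 1$ with $Q_j\in\mathcal{V}_k$; call this index set $I_j$. For each such $k$, the set $\overline{V}_k$ touches $\overline{(c_0Q_j)_c}$, so $V_k$ meets the fixed annular region $\overline{(c_0Q_j)_c}\setminus Q_j$, and every Whitney cube meeting $V_k$ has edge length $<2^{-(m-2)}$ by part (\textit{ii}) of Lemma \ref{relabel} (more precisely, a $V_k$-type component is precisely one that is \emph{not} thick). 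The other boundary cubes $Q'\in\mathcal{V}_k$ with $Q'\neq Q_j$ lie in $\mathcal{P}_m\subset\mathcal{P}_m^{(1)}$, hence all have edge length comparable to $2^{-m}\simeq l(Q_j)$; and by the Case~1 analysis in the proof of Lemma \ref{relabel}, each $\overline{(c_0Q')_c}$ with $Q'\in\mathcal{V}_k$ meets $\overline{(c_0Q_j)_c}$ (or meets $B_{Q}$ for a cube $Q$ adjacent to $Q_j$), so all these cubes lie in $C\,c_0 Q_j$ for a dimensional constant $C$. Since the cubes of $\mathcal{P}_m$ are pairwise disjoint and of size $\simeq l(Q_j)$, there are at most $C'=C'(c_0,n)$ cubes of $\mathcal{P}_m$ available inside $C\,c_0 Q_j$ in total. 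This bounds $|I_j|$ only if each $\mathcal{V}_k$ contains a cube other than $Q_j$ that is not shared — which is not automatic — so instead I extract the \emph{maximal} subfamily directly.

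Next I would build the maximal subcollection greedily. Enumerate $I_j=\{k_1,k_2,\ldots\}$. Put $k_{j_1}:=k_1$ into the chosen list. Having chosen $k_{j_1},\ldots,k_{j_s}$, discard every $k\in I_j$ with $\mathcal{V}_k\subset\bigcup_{i\leq s}\mathcal{V}_{j_i}$, and if any index survives, let $k_{j_{s+1}}$ be the least surviving one. By construction $\mathcal{V}_{j_{s+1}}\not\subset\bigcup_{i\leq s}\mathcal{V}_{j_i}$, so it contains a cube $Q_{(s+1)}\in\mathcal{P}_m$, with $Q_{(s+1)}\neq Q_j$, that is \emph{new}, i.e.\ $Q_{(s+1)}\notin\mathcal{V}_{j_i}$ for $i\leq s$. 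The map $s+1\mapsto Q_{(s+1)}$ is therefore injective into the set of cubes of $\mathcal{P}_m$ lying in $C\,c_0 Q_j$, and by the disjointness-and-comparable-size volume count above this set has cardinality at most $C'(c_0,n)$. Hence the process stops after at most $C'(c_0,n)$ steps, giving $l\leq C'(c_0,n)$. The remaining properties are immediate from the construction: maximality $\mathcal{V}_{j_s}\not\subset\bigcup_{i\neq s}\mathcal{V}_{j_i}$ holds because $Q_{(s)}$ witnesses membership in $\mathcal{V}_{j_s}$ but in no other $\mathcal{V}_{j_i}$ with $i\neq s$ (for $i<s$ by the choice of $Q_{(s)}$ as new, for $i>s$ because $Q_{(i)}$ was chosen new relative to a list already containing $s$); and if $Q_j\in\mathcal{V}_k$ then at the step when $k$ was discarded we had $\mathcal{V}_k\subset\bigcup_{i}\mathcal{V}_{j_i}$, which is the stated covering property.

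The one point requiring care — the main obstacle — is the geometric containment claim that every $\overline{(c_0Q')_c}$ with $Q'\in\mathcal{V}_k$ and $Q_j\in\mathcal{V}_k$ sits inside a fixed dilate $C\,c_0 Q_j$: this is where the ball separation property and the Case~1 reasoning of Lemma \ref{relabel} are genuinely used, since \emph{a priori} $V_k$ could be long and thin and its two bounding cubes far apart. The resolution is that both $Q_j$ and $Q'$ are among the cubes whose $c_0$-neighbourhoods bound the \emph{same} non-thick component $V_k$: walking along $V_k$ from $\overline{(c_0Q_j)_c}$ to $\overline{(c_0Q')_c}$ through Whitney cubes all of size $<2^{-(m-2)}\simeq l(Q_j)$, the separation property forces this walk to be contained in $\overline{(c_0Q_j)_c}$ together with the cubes of $\mathcal{P}_m$ it meets, all of which lie in a fixed dilate of $Q_j$; consequently $Q'\subset C\,c_0 Q_j$. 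Once this containment is in hand, the injectivity-plus-volume argument closes the lemma, and the constant depends only on $c_0$ and $n$ as claimed.
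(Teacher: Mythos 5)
Your overall strategy --- confine every cube of $\bigcup\{\mathcal{V}_k : Q_j\in\mathcal{V}_k\}$ to a fixed dilate of $Q_j$ and then count by disjointness and comparable size --- is the same as the paper's, and the injectivity-plus-volume count correctly gives the bound on $l$. Two points need tightening.

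First, the greedy construction does not by itself establish the non-redundancy $\mathcal{V}_{j_s}\not\subset\bigcup_{i\neq s}\mathcal{V}_{j_i}$. The new cube $Q_{(s)}$ witnesses $Q_{(s)}\notin\mathcal{V}_{j_i}$ only for $i<s$. For $i>s$ your observation that $Q_{(i)}$ was chosen new relative to a list already containing $s$ yields $Q_{(i)}\notin\mathcal{V}_{j_s}$, which is not what is needed; nothing prevents $Q_{(s)}\in\mathcal{V}_{j_i}$ for some $i>s$, and later choices may collectively cover $\mathcal{V}_{j_s}$, making it redundant. The paper avoids the issue by simply noting that there are finitely many distinct collections $\mathcal{V}_i$, so one may start from the full family $\{\mathcal{V}_k : Q_j\in\mathcal{V}_k\}$ and iteratively discard any member contained in the union of the others; this preserves coverage and only decreases $l$, so the bound you proved still closes the argument. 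Adding this post-processing step fixes the gap.

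Second, for the geometric containment the paper does not walk along $V_k$: it cites the proof of part (\textit{ii}) of Lemma \ref{relabel}, where it was shown that for each $V_k$ every $Q'\in\mathcal{V}_k$ has $B_{Q'}$ intersecting the common set $(c_0 Q_k^k)_c$ for a single blocking cube $Q_k^k$. Since $Q_j\in\mathcal{V}_k$ as well and all cubes of $\mathcal{P}_m$ have edge length comparable to $2^{-m}$, this immediately places all of $\mathcal{V}_k$ inside a fixed dilate $(CQ_j)_c$, $C=C(c_0,n)$, for every such $k$. Your sketched resolution --- that the ball separation property forces the walk from $\overline{(c_0Q_j)_c}$ to $\overline{(c_0Q')_c}$ through $V_k$ to stay in $\overline{(c_0Q_j)_c}$ and the $\mathcal{P}_m$-cubes it meets --- is left unjustified and is not what Case~1 of Lemma \ref{relabel} says; citing the common blocking cube is the intended, and cleaner, route.
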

\begin{proof}
 If $Q_j\notin\mathcal{V}^{(m)}$, then set $l=0$. If $Q_j\in\underset{1\leq s\leq l}\bigcap\mathcal{V}_{j_s}$, where $\{\mathcal{V}_{j_s}\}_{1\leq s \leq l}$ is a maximal distinct such collection, then for each $Q\in \underset{1\leq s\leq l}\bigcup\mathcal{V}_{j_s}$, $Q\subset (CQ_j)_c$, where $C=C(c_o,n)$, by part (\textit{ii}) of our decomposition in Lemma \ref{relabel}. Thus, there are an absolutely bounded number of cubes in $\underset{1\leq s\leq l}\cup\mathcal{V}_{j_s}$. So we get an upper bound for $l$ in terms of $c_0$ and $n$. To see the existence of such a maximal collection one observes that there are finitely many distinct collections $\mathcal{V}_i$.
\end{proof}
%In the above Lemma we choose a maximal distinct collection because it may be the case that $\mathcal{V}_i=\mathcal{V}_j$, while $V_i\neq V_j$. 
For each $Q_i\in\mathcal{P}_m$, set 
$$\tilde{\mathcal{Q}}_i= \underset{1\leq j\leq l}\bigcup\mathcal{V}_{i_j},$$ where $\{\mathcal{V}_{i_j}\}_j$ is a chosen collection for $Q_i$, maximal in the sense of the Lemma \ref{bound_belong}. Note that the definition of $\tilde{\mathcal{Q}}_i$ does not depend on the maximal collection $\mathcal{V}_{i_1},\ldots,\mathcal{V}_{i_{l}}$ used.
Choose a maximal subcollection $\{\mathcal{Q}_i\}_i$ from $\{\tilde{\mathcal{Q}}_i\}_i$ such that collections in $\{\mathcal{Q}_i\}_i$ are distinct in the sense that
$$\underset{Q\in\mathcal{Q}_i}\bigcup  Q\not\subset  \underset{i\neq s}\bigcup\underset{Q\in\mathcal{Q}_s}\bigcup Q$$ and satisfy
$$\bigcup_i\mathcal{Q}_i  =\bigcup_i\tilde{\mathcal{Q}_i}=\mathcal{V}^{(m)}.
$$
Denote this collection by $\mathcal{Q}^{(m)}:=\{\mathcal{Q}_i\}_i$. 

Note that given $V_i$, there exists $j_i\in\mathbb{N}$ such that $V_i$ and $x_0$ are in separate components of $\Omega\setminus\underset{Q\in\mathcal{Q}_{j_i}}\bigcup \overline{(c_0Q)}_c$ (since $\mathcal{V}_i$ is contained in one of these collections). There may be more than one $\mathcal{Q}_{j_i}$ satisfying the above condition, so we assign to $V_i$ the smallest possible index $j_i$ so that $\mathcal{Q}_{j_i}$ works.
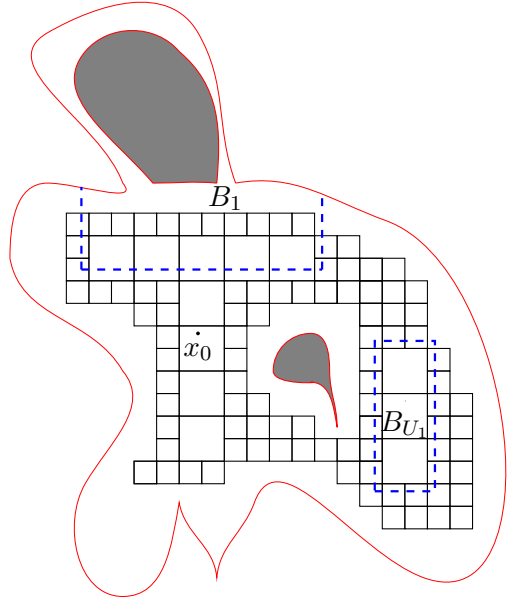
\begin{figure}
\begin{center}
\begin{tikzpicture}[scale=0.5]
\draw [red, fill=gray,thin] (0,0) to [out=90,in=180] (1,1)
to [out=0,in=90] (1.5,0) to [out=-90,in=90] (1.7,-1.5)
to [out=100,in=0] (1.0,-.3) to [out=0,in=-90] (0,0);
\draw [red,line width=0 mm](-1,5) [out=370,in=160] to (3,4) [out=340,in=340] to (4,-5.5) [out=160,in=70] to (-0.5,-3.5) [out=250,in=85] to (-1.5,-5.5) [out=95,in=280] to (-2.5,-3.5) [out=260,in=0] to (-4,-6) [out=180,in=240] to (-5,-3) [out=60,in=310] to (-4,0) [out=120,in=280] to (-7,3) [out=90, in=180] to (-6,5) [out=0,in=310] to (-4,5) [out=110,in=210] to (-6,9) [out=30,in=120] to (-2,9) [out=300,in=110] to (-1,5);
\draw [red,fill=gray,thin] (-1.5,5) [out=85,in=320] to (-2.5,8.5) [out=130, in=50] to (-5,8.5) [out=230,in=130] to (-3.2,5) [out=0,in=175] to (-1.5,5);

%\draw [step=1.2] (-2.5,-2.4) grid (-1.2,3.9);
%\draw [step=1,2] (-1.2,1.2)  grid (-3.8,3.7);
\foreach \y in {-2.4,-1.2,...,2.6}
 \draw [](-2.5,\y) rectangle (-1.3,\y+1.2);
%\draw (-2.5,0) rectangle node{\tiny $x_0$} (-1.3,1.2); 
 
\filldraw (-2,1) circle [radius=1pt]  node [anchor=north] {$x_0$};
\filldraw (-2,4.6)   node [anchor=west] {$B_1$};

\foreach \x in {-1.3,-0.1}
 \draw (\x,3.6) rectangle (\x+1.2,2.4);
 
\foreach \x in {-2.5,-3.7}
 \draw (\x-1.2,2.4) rectangle (\x,3.6);
 
\foreach \y in {-2.4,-1.8,...,1.8}
 \draw[black] (-2.5,\y) rectangle (-3.1,\y+0.6);
 
\foreach \y in {-2.4,-1.8,...,1.8}
 \draw[black] (-1.3,\y) rectangle (-0.7,\y+0.6); 
 
\foreach \x in {-0.7,-0.1,0.5}
 \draw [black](\x,1.8) rectangle (\x+0.6,2.4);
 
\foreach \x in {-3.1,-3.7,-4.3}
 \draw [black] (\x,1.8) rectangle (\x-0.6,2.4); 
 
\foreach \x in {-4.9,-4.3,...,0.5}
 \draw [black](\x,3.6) rectangle (\x+0.6,4.2);
 
 \foreach \y in {1.8,2.4,3,3.6}
 \draw [black] (-5.5,\y) rectangle (-4.9,\y+0.6);
 
\foreach \y in {-2.4,-1.8}
 \foreach \x in {-0.7,-0.1,0.5}
  \draw[] (\x,\y) rectangle (\x+0.6,\y+0.6);
  
\draw [](-0.7,-1.2) rectangle (-0.1,-0.6);  
 
\foreach \y in {-3,-1.8,-0.6}
 \draw [] (2.9,\y) rectangle (4.1,\y+1.2);
 
\foreach \y in {0.6,1.2,1.8}
 \foreach \x in {2.9,3.5}
  \draw (\x,\y) rectangle (\x+0.6,\y+0.6); 
  
\foreach \x in {1.1,1.7,2.3}
 \draw [] (\x,1.8) rectangle (\x+0.6,2.4);
 
 \foreach \y in {1.8,2.4,3}
 \draw [black] (1.1,\y) rectangle (1.7,\y+0.6);

\foreach \x in {1.7,2.3,2.9}
 \draw (\x,2.4) rectangle (\x+0.6,3);
 
\foreach \x in {1.7}
 \draw (\x,3) rectangle (\x+0.6,3.6); 
 
\foreach \x in {1.1,1.7,2.3}
 \draw (\x,-2.4) rectangle (\x+0.6,-1.8); 
 
\draw (2.3,-1.8) rectangle (2.9,-1.2); 

\draw (2.3,-3) rectangle (2.9,-2.4);

\draw (2.3,1.2) rectangle (2.9,1.8);
\draw (-0.7,1.2) rectangle (-0.1,1.8);
\draw (1.7,-3) rectangle (2.3,-2.4);
\draw (-3.1,1.2) rectangle (-3.7,1.8);

\foreach \x in {-1.3,-1.9,...,-3.1}
 \draw[black] (\x,-3) rectangle (\x-0.6,-2.4);
 
 \foreach \x in {-3.1}
 \draw[] (\x,-3) rectangle (\x-0.6,-2.4);
 
\foreach \y in {-3,-2.4,...,0.6}
 \draw (4.1,\y) rectangle (4.7,\y+0.6);
 
\foreach \y in {-3,-2.4,...,-0.6}
 \draw (4.7,\y) rectangle (5.3,\y+0.6); 
 
\foreach \x in {2.3,2.9,...,4.7}
 \draw (\x,-3) rectangle (\x+0.6,-3.6);
 
\foreach \x in {2.9,3.5,...,5.3}
 \draw (\x,-3.6) rectangle (\x+0.6,-4.2); 
 
 \draw[line width=0.30mm,dashed,blue] (-5.1,2.7) -- (-5.1,4.9);
 \draw[line width=0.30mm,dashed,blue] (1.3,2.7) -- (1.3,4.8);
 \draw[line width=0.30mm,dashed,blue] (-5.1,2.7) -- (1.3,2.7);
 
 \foreach \y in {-2.4,-1.8,...,2.4}
 \draw (2.3,\y) rectangle (2.9,\y+0.6);
 \draw [line width=0.30mm,dashed,blue](2.7,-3.2) rectangle (4.3,0.8);
 \filldraw (3.5,-0.8) circle [radius=0pt]  node [anchor=north] {$B_{U_1}$};
 
\end{tikzpicture}
\end{center}
\caption{Illustrating the sets $B_i$.}
\label{tentacle}
\end{figure} 

Therefore, we may classify the collection of components $\{V_i\}_i$ based on who blocks them as follows. Recall that $B_Q=(\frac{11}{10}c_0Q)_c$. For each $\mathcal{Q}_i\in\mathcal{Q}^{(m)}$, let $\mathcal{B}_{i}$ be the collection $\{V_{i_j}\}_j$ of components that have been assigned as above to $\mathcal{Q}_i$. Set
$$
B_i:=\underset{V_{i_j}\in\mathcal{B}_i}\bigcup \overline{V}_{i_j}\;\cup\; \underset{Q\in\mathcal{Q}_i}\bigcup B_Q.$$
The sets $B_i$ may be now interpreted as the tentacles mentioned at the beginning of this section (see Figure \ref{tentacle}). Fix a cube $Q_i\in\mathcal{Q}_i$, which we will below refer to as the `assigned' cube of $\mathcal{Q}_i$ (and $B_i$). 
We write $\mathcal{B}^{(m)}$ for the collection $\{\mathcal{B}_i\}_i$.

Set $\mathcal{U}^{(m)}=\mathcal{P}_m\setminus \bigcup\{\mathcal{Q}_i\in\mathcal{Q}^{(m)}\}$ as the collection of cubes whose neighbourhoods are not needed for separating the components $V_i$.
Next, we describe the decomposition of $\Omega$ which we use for our partition of unity. The sets are 
\begin{enumerate}
\item[(\textit{i})]$B_{U_i}:=B(U_i, 2^{-m}/100)$, $0\leq i\leq l_m$,
\item[(\textit{ii})]$B_i$, where $\mathcal{Q}_i\in\mathcal{Q}^{(m)}$ and
\item[(\textit{iii})]$B_Q$, where $Q\in\mathcal{U}^{(m)}$.
\end{enumerate}
%Note that the $Q\in\mathcal{P}_m$ which form a part of the boundary of $V_i\in\mathcal{V}^{(m)}\setminus\mathcal{U}^{(m)}$, have been put them together with $V_i$; the reason being that uniformly enlarged neighbourhoods of $V_i$ may not have bounded overlap and thus cause difficulty in defining a

 Then we have the following estimate.
\begin{lem}It holds that
\begin{equation}\label{bounded_overlap}
 1\leq \sum_{Q\in\mathcal{U}^{(m)}}\ind_{B_Q}(x)+\underset{0\leq i\leq l_m}\sum\ind_{B_{U_i}}(x)+\underset{\mathcal{Q}_i\in\mathcal{Q}^{(m)}}\sum\ind_{B_{i}}(x)\leq c'(c_0,n),
\end{equation}for $x\in\Omega$.
\end{lem}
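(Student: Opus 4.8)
The plan is to prove the bounded overlap estimate \eqref{bounded_overlap} by first establishing the trivial lower bound and then controlling the three types of sets separately, finally bounding the number of pieces of a given type that can meet a fixed point $x \in \Omega$.

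\textbf{Lower bound and the basic covering claim.} First I would note that the three families $\{B_{U_i}\}$, $\{B_i\}$, $\{B_Q : Q \in \mathcal{U}^{(m)}\}$ together cover $\Omega$. Indeed, $\Omega$ decomposes as $\bigcup_{Q \in \mathcal{P}_m}\overline{(c_0 Q)}_c \cup \bigcup_i \overline{U}_i \cup \bigcup_i \overline{V}_i$; each $\overline{U}_i$ is contained in its enlargement $B_{U_i}$, each $\overline{V}_i$ lies in the corresponding $B_j$ by construction of the sets $\mathcal{B}_j$ (every $V_i$ has been assigned to some $\mathcal{Q}_j$), and each cube $Q \in \mathcal{P}_m$ lies in $B_Q \subset B_j$ if $Q$ belongs to some $\mathcal{Q}_j \in \mathcal{Q}^{(m)}$, or in $B_Q$ with $Q \in \mathcal{U}^{(m)}$ otherwise. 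Since $B_Q = (\tfrac{11}{10}c_0 Q)_c \supset \overline{(c_0 Q)}_c$, every point of $\Omega$ lies in at least one set of the decomposition, giving the lower bound $1$.

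\textbf{Upper bound via Whitney-cube counting.} For the upper bound I would fix $x \in \Omega$ and let $Q_x \in \mathcal{W}$ be the Whitney cube containing $x$. The key geometric observation is that each of the enlarged sets containing $x$ forces a Whitney cube of size comparable to $l(Q_x)$ to sit within a fixed multiple of $\mathrm{dist}(x,\partial\Omega)$ of $x$: if $x \in B_Q$ for a cube $Q$ of the separating family, then $Q$ and $Q_x$ are chained through $\tfrac{11}{10}c_0 Q \cap \Omega$, so standard Whitney-decomposition estimates give $l(Q) \simeq l(Q_x)$ (up to constants depending on $c_0, n$) and $Q \subset B(x, C(c_0,n)\,l(Q_x))$. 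For the sets $B_{U_i}$, the relabelling in Lemma \ref{relabel}(i) guarantees every Whitney cube meeting $U_i$ has $l(Q) \ge 2^{-(m-2)}$, so the $2^{-m}/100$-neighbourhood only reaches nearby cubes of comparable size; for the sets $B_i$, Lemma \ref{relabel}(ii) bounds $\#\mathcal{V}_i \le M(c_0,n)$ and Lemma \ref{bound_belong} bounds how many $\mathcal{V}_i$'s get grouped into a single $\mathcal{Q}_j$, so each $B_i$ is a union of a bounded number of sets $\overline{V}_{i_j}$ and $B_Q$, each again localized near $x$ and of comparable Whitney size when it meets $x$. Since the Whitney cubes are pairwise disjoint with bounded eccentricity, only $C(c_0,n)$ cubes of size comparable to $l(Q_x)$ fit inside $B(x, C(c_0,n)\,l(Q_x))$; hence only $C(c_0,n)$ of the $B_Q$ ($Q \in \mathcal{U}^{(m)}$) and $B_{U_i}$ can contain $x$.

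\textbf{The tentacle overlap — the main obstacle.} The delicate part is bounding $\sum_i \ind_{B_i}(x)$, since the sets $V_i$ themselves can be large (not of Whitney size $l(Q_x)$) and a priori many could pass near $x$. Here I would exploit the maximality/distinctness built into the choice of $\mathcal{Q}^{(m)}$: the collections $\mathcal{Q}_i$ were chosen so that $\bigcup_{Q \in \mathcal{Q}_i} Q \not\subset \bigcup_{s\neq i}\bigcup_{Q \in \mathcal{Q}_s} Q$, and by Lemma \ref{bound_belong} each cube $Q \in \mathcal{P}_m$ lies in a bounded number of the $\tilde{\mathcal{Q}}_i$, hence in a bounded number of the $\mathcal{Q}_i$. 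If $x \in B_i$, then either $x$ lies in some $B_Q$ with $Q \in \mathcal{Q}_i$ — handled as above, with the extra input that the number of distinct $\mathcal{Q}_i$ through a fixed such $Q$ is bounded — or $x$ lies in some $\overline{V}_{i_j} \in \mathcal{B}_i$; but the $V_i'$ are the \emph{distinct components} of $\Omega \setminus \bigcup_{Q\in\mathcal{P}_m}\overline{(c_0Q)}_c$, so $x$ lies in at most one $\overline{V}_{i_j}$ up to boundary effects, and the boundary $\partial V_{i_j}$ again consists of pieces of the $\overline{(c_0 Q)}_c$ already counted. Assembling: each point $x$ meets a bounded number of separating cubes $Q$ (Whitney counting), each such $Q$ is in a bounded number of $\mathcal{Q}_i$ (Lemma \ref{bound_belong}), and $x$ is in at most one open tentacle component, so $\sum_i \ind_{B_i}(x) \le C(c_0,n)$. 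Adding the three bounds yields $c'(c_0,n)$ and completes the proof. $\qed$
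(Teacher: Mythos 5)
Your proof takes the same overall approach as the paper: the lower bound via the covering of $\Omega$ by the three families, the upper bound for the $B_{U_i}$ and the $B_Q$ with $Q\in\mathcal{U}^{(m)}$ by counting size-$2^{-m}$ Whitney cubes near $x$, and the tentacle overlap via the pairwise disjointness of the $V_j$ combined with Lemma \ref{bound_belong} and the distinctness built into the choice of $\mathcal{Q}^{(m)}$. The organization differs modestly (you bound the number of $\mathcal{Q}_i$ to which a fixed separating cube belongs, while the paper places all cubes of $\bigcup_s\mathcal{Q}_s$ within $C2^{-m}$ of $x$ and counts them directly), but the substance is the same.

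There is, however, a real slip in the Whitney-counting step. You claim that $x\in B_Q=\bigl(\tfrac{11}{10}c_0Q\bigr)_c$ forces $l(Q)\simeq l(Q_x)$ and hence $Q\subset B(x,C(c_0,n)\,l(Q_x))$, on the grounds that $Q$ and $Q_x$ are chained through $\tfrac{11}{10}c_0Q\cap\Omega$. Only the inequality $l(Q_x)\lesssim c_0\, l(Q)$ is valid; the reverse fails, because $B_Q$ is the full component of $\tfrac{11}{10}c_0Q\cap\Omega$ containing $Q$ and may extend arbitrarily close to $\partial\Omega$, so $Q_x$ can be arbitrarily small compared to $Q$ and a Whitney chain inside $B_Q$ need not have bounded length. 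When $l(Q_x)\ll 2^{-m}$ the inclusion $Q\subset B(x,C\,l(Q_x))$ is false, and the count as you wrote it does not close. The fix is the count the paper actually makes and requires no comparison with $l(Q_x)$ at all: every relevant $Q\in\mathcal{P}_m$ has $l(Q)\in[2^{-m},2^{-(m-2)})$ and $\dist(x,Q)\le\diam\bigl(\tfrac{11}{10}c_0Q\bigr)\le C(c_0,n)2^{-m}$, so all such $Q$ lie in the fixed ball $B(x,C(c_0,n)2^{-m})$ and, being pairwise disjoint with $|Q|\ge 2^{-mn}$, number at most $C'(c_0,n)$. With this replacement the rest of your argument, including the handling of the tentacle overlap via the uniqueness of the open component containing $x$, the boundary pieces lying in $\bigcup_Q\overline{(c_0Q)}_c$, and Lemma \ref{bound_belong}, is sound.
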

\begin{proof}
%The decomposition of $\Omega$ with respect to which we contruct the partition of unity is given by $V_0$, members of $\mathcal{U}^{(m)}$, $B_Q$
The lower bound follows since $\Omega=\underset{0\leq i\leq l_m}\bigcup B_{U_i}\cup \underset{Q_i\in\mathcal{Q}_i}\bigcup B_i\cup\underset{Q\in\mathcal{U}^{(m)}}\bigcup B_Q$. For the upper bound, it is clear that
 $$\sum_{i=0}^{l_m}\ind_{B_{U_i}}\leq c'(n)\;\,\text{and}\;\sum_{Q\in\mathcal{U}^{(m)}}\ind_{B_Q}\leq c'(c_0,n).$$  
Let $x\in\Omega$. Suppose $x\in B_{i_i}\cap\ldots\cap B_{i_l}$, where $\{B_{i_j}\}_{j=1}^l$ is the collection of all such sets $B_i$. We want an upper bound for $l$. Then, since $\{V_j\}_j$ are pairwise disjoint, for any $Q\in\underset{i_1\leq s\leq i_l}\cup\mathcal{Q}_s$ we have by Lemma \ref{bound_belong} and the construction of the collections $\mathcal{V}_j$ that $$\dist(x,Q)\leq C2^{-m},$$ where $C=C(c_0)$. This gives an upper bound in terms of $c_0$ and $n$ for the number of cubes $\#\underset{i_1\leq s\leq i_l}\cup\mathcal{Q}_s$. So, we get an upper bound $A$ for $l$, such that $A=A(c_0,n)$.
\end{proof}

Define $$\Omega_m=\bigcup_{j=0}^{l_m} U_j.$$We note the following. 
\begin{enumerate}
 \item[($\ast$)] We have that $\Omega_M^{(1)}\subset\Omega_m$ for $M(m,c_0)$ chosen small enough; precisely we require that, $2^{-M}>10\,c_0\,2^{-m}$. This follows from our definitions. We therefore have that $|\Omega_m|\rightarrow |\Omega|$ as $m\rightarrow\infty$. 
 %\item[{(\textit{ii})}] Given $x\in\cup_i V_i$ and any curve $\gamma\subset\Omega$ joining $x$ to $x_0$ there exists $Q\in\mathcal{P}_m$ such that $\gamma\cap (c_0Q)_c\neq \emptyset$.  This follows from the definition of $\mathcal{P}_m$.
 %\item[(\textit{ii})]Given $z\in B_Q$ for $Q\in\mathcal{P}_m$, there exists a Whitney cube in $\mathcal{W}_m$ such that $z$ lies in its trail. Indeed, if $z\in Q''$ for some $Q''\in\mathcal{W}_m$, then $Q''$ is the required cube and we are done. Suppose otherwise. Let $Q'\in \hat{\mathcal{P}}_m$ be the first cube in $\hat{\mathcal{P}}_m$ such that $\Gamma_z\cap Q'\neq \emptyset$. Then it follows that $Q\in\mathcal{P}_m$ from our construction.
\end{enumerate}

We will need the following definitions for the ensuing lemmas.
\begin{defn}[\textit{Trail} and \textit{cover} of Q]Given $Q\in\mathcal{W}$, define the \textit{trail of $Q$}, denoted $\mathcal{T}(Q)$ to be the set of all points $y\in\Omega$ such that $\Gamma_y(x_0,y)\cap Q\neq \emptyset$. In particular $Q\subset\mathcal{T}(Q)$.

Next, given $Q\in\mathcal{P}_m$ define the \textit{cover of $Q$}, denoted by $\mathcal{A}(Q)$ to be the collection of those Whitney cubes $Q'\in\mathcal{W}_m^{(1)}$ such that there exists $z\in B_Q$ for which either 
\begin{enumerate}
\item[(a)] $z\in Q'$, or
\item[(b)] $z\in\mathcal{T}(Q')$, $Q'\in\mathcal{P}_m^{(1)}$ and $\Gamma_z\cap Q'\neq\emptyset$ for a radial geodesic $\Gamma_z$. 
\end{enumerate}
\end{defn}
 The following lemmas are just direct consequences of our geometric assumptions on $\Omega$ and the definition of our decomposition.

\begin{lem}\label{mainlem_0}
 Given $Q\in\mathcal{P}_m$, we have that $B_Q\subset\cup\{\mathcal{T}(Q'):Q'\in\mathcal{A}(Q)\}$ and $\#\mathcal{A}(Q)\leq c(c_0,n)$. 
\end{lem}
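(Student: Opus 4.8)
The plan is to prove the two assertions of Lemma~\ref{mainlem_0} separately, both reducing to the ball separation property together with the local diameter Gehring-Hayman hypothesis built into the definition of a $(c_0,c,R)$-radially hyperbolic domain.

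\textbf{Step 1: The inclusion $B_Q\subset\bigcup\{\mathcal{T}(Q'):Q'\in\mathcal{A}(Q)\}$.} Let $z\in B_Q$. If the Whitney cube $Q'$ containing $z$ already lies in $\mathcal{W}_m^{(1)}$, then $z\in Q'\subset\mathcal{T}(Q')$ and $Q'\in\mathcal{A}(Q)$ by clause (a), so we are done. Otherwise $z$ lies in one of the ``tentacle'' components, i.e. $l(Q')<2^{-(m-2)}$; then, exactly as in the proof of Lemma~\ref{relabel}, a radial geodesic $\Gamma_z$ from $x_0$ to $z$ must intersect some $Q''\in\mathcal{P}_m^{(1)}$ (the cube whose enlarged neighbourhood separates $z$ from $x_0$). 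For that $Q''$ we have $z\in\mathcal{T}(Q'')$, $Q''\in\mathcal{P}_m^{(1)}$ and $\Gamma_z\cap Q''\neq\emptyset$, which is precisely clause (b), so $Q''\in\mathcal{A}(Q)$ and $z\in\mathcal{T}(Q'')$. Hence every point of $B_Q$ lies in the trail of some cube of $\mathcal{A}(Q)$.

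\textbf{Step 2: The bound $\#\mathcal{A}(Q)\le c(c_0,n)$.} The cubes satisfying clause (a) all meet $B_Q=(\tfrac{11}{10}c_0Q)_c$, so they have edge length comparable to $l(Q)$ (by the standard Whitney-cube property, since $\di_\Omega$ varies boundedly on $B_Q$ — here one uses $k_\Omega(x,y)\ge|\log(\di_\Omega(x)/\di_\Omega(y))|$ and that $B_Q$ has bounded quasihyperbolic diameter, which follows from Lemma~\ref{local_dia_implies_len}-type reasoning or directly from the fact that $B_Q$ is a bounded-size Euclidean neighbourhood of $Q$ inside $\Omega$) and lie in a Euclidean ball of radius $\lesssim c_0 l(Q)$; a volume count gives $\lesssim c(c_0,n)$ of them. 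For the cubes $Q'$ satisfying clause (b): there is $z\in B_Q$ with $\Gamma_z\cap Q'\neq\emptyset$, and $Q'\in\mathcal{P}_m^{(1)}$ so $l(Q')\simeq 2^{-m}\simeq l(Q)$. The ball separation property applied to the radial geodesic $\Gamma_z$ at the point where it meets $Q'$ shows that $Q'$ lies within intrinsic distance $\lesssim c_0 l(Q')$ of $z$, hence $Q'$ is within Euclidean distance $\lesssim c_0 l(Q)$ of $B_Q$; again these cubes have comparable size and are confined to a bounded Euclidean region, so there are at most $c(c_0,n)$ of them. Combining the two counts gives the claim.

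\textbf{Main obstacle.} The delicate point is Step~2, clause (b): one must control, via ball separation alone, how far along a radial geodesic $\Gamma_z$ a cube $Q'\in\mathcal{P}_m^{(1)}$ can sit while still meeting $\Gamma_z$ for some $z\in B_Q$ — the issue being that $\Gamma_z$ could a priori wander. The resolution is that $z\in B_Q$ forces $z$ and $Q'$ to be on the ``$x_0$-side'' relation controlled by the block relation used to define $\mathcal{P}_m$, so that $\overline{(c_0Q')_c}$ must actually meet a bounded-size neighbourhood of $Q$; the ball separation property then pins $Q'$ to within $\lesssim c_0 l(Q)$ of $B_Q$ Euclideanly. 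I expect this to be the step where the geometric hypotheses (ball separation, and implicitly the local Gehring-Hayman control of radial geodesics near $x_0$) are genuinely used, the rest being Whitney-cube bookkeeping.
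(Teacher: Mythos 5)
Your proof is correct and follows the paper's approach: the inclusion $B_Q\subset\bigcup\mathcal{T}(Q')$ comes from the same core-versus-tentacle dichotomy, and the cardinality bound rests on showing that $\overline{(c_0Q')}_c\cap\overline{B}_Q\neq\emptyset$ for each $Q'\in\mathcal{A}(Q)$ — a consequence of ball separation combined with the fact that $Q\in\mathcal{P}_m$ is not blocked — after which one counts comparably-sized disjoint cubes in a Euclidean region of radius $\lesssim c_0 l(Q)$, exactly as in the paper. Two small caveats. First, your opening sentence over-invokes the hypotheses: the paper uses \emph{only} the ball separation property in this lemma and says so explicitly immediately afterward (``So far we have only used the ball separation condition''), with the Gehring--Hayman conditions first entering in Lemma~\ref{mainlem_1}. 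Second, in Step~2 clause~(b) the inline assertion that ball separation ``shows that $Q'$ lies within intrinsic distance $\lesssim c_0 l(Q')$ of $z$'' is not quite what ball separation gives directly — it yields that the intrinsic ball about $w\in\Gamma_z\cap Q'$ separates $x_0$ from $z$ when both lie outside it, not proximity of $z$ to $Q'$ — but your ``Main obstacle'' paragraph correctly repairs this by invoking the non-blocking of $Q$: if $\overline{(c_0Q')}_c$ were disjoint from $B_Q$, ball separation would force $Q'|Q$, contradicting $Q\in\mathcal{P}_m$; hence $\overline{(c_0Q')}_c\cap\overline{B}_Q\neq\emptyset$, which is the estimate the volume count actually needs.
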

\begin{proof}
 Let $z\in B_Q$.
We have two cases. Let us first assume that $z\in\mathcal{T}(Q')$,  where $Q'\in\mathcal{P}_m^{(1)}$ satisfies $\Gamma_z\cap Q'\neq\emptyset$. Then by the ball separation applied to the curve $\Gamma_z$, we have that if $\overline{(c_0Q)}_c\cap \overline{(c_0Q')}_c\neq\emptyset$, then $x_0$ and $(c_0Q)_c$ are in different components of $\Omega\setminus\overline{(c_0Q')}_c$. Since $Q'|Q$ is false, we must have $\overline{(c_0Q')}_c\cap \overline{B}_Q\neq\emptyset$.
  
Next note that if $z$ is not of above type, then $z\in\Omega_m^{(1)}$ and thus case (a) in the definition of $\mathcal{A}(\cdot)$ must hold. Thus $B_Q\subset\cup\{\mathcal{T}(Q'):Q'\in\mathcal{A}(Q)\}$.

Again, since $\overline{(c_0Q')}_c\cap \overline{B}_Q\neq\emptyset$ for $Q'\in\mathcal{A}(Q)$, by volume comparison we get the required bound on the cardinality of $\mathcal{A}(Q)$. Indeed, the members of $\mathcal{A}(Q)$ are comparably large with respect to $Q$ and are all contained in the set $100c_0Q$.
\end{proof}

So far we have only used the ball separation condition. The next lemma specifies the role of the Gehring-Hayman conditions. Denote by $\dist_k(A,B)$ the distance measured in the quasihyperbolic metric between sets $A,B \subset\Omega$.

\begin{lem}\label{mainlem_1}
Given $c\geq 1$, there exists $R=R(c_0,c,n)$ such that, if $\Omega$ is $(c_0,c,R')$-radially hyperbolic for $R'\geq R$, then given $Q\in\mathcal{P}_m$, and $Q_1$ and $Q_2$ in $\mathcal{A}(Q)$ such that $\mathcal{T}(Q_1)\cap \mathcal{T}(Q_2)\neq \emptyset$, we have that $\dist_k(Q_1,Q_2)\leq M$, where $M=M(c_0,c,n)$.
\end{lem}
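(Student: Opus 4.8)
The goal is to bound $\dist_k(Q_1, Q_2)$ when $Q_1, Q_2 \in \mathcal{A}(Q)$ and their trails intersect. I will work through the centers $x_0$ and exploit that both $Q_1$ and $Q_2$ are ``visible'' from $x_0$ along radial geodesics that pass close to the small cube $Q$. First I would set up notation: let $z \in \mathcal{T}(Q_1) \cap \mathcal{T}(Q_2)$, so there are radial geodesics $\Gamma_z^{(1)}, \Gamma_z^{(2)}$ from $x_0$ to $z$ (or to points $z_1 \in Q_1$, $z_2 \in Q_2$ with the geodesics passing through $Q_1$, $Q_2$ respectively; I would need to be slightly careful about which variant of membership in $\mathcal{A}(Q)$ holds, cases (a) versus (b) in the definition of the cover). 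The key geometric fact, coming from Lemma \ref{mainlem_0}, is that all cubes in $\mathcal{A}(Q)$ are comparable in size to $Q$ and sit inside $100 c_0 Q$, hence $\di_\Omega$ on $Q_1$ and $Q_2$ is comparable to $l(Q)$ and $|x - y| \lesssim_{c_0} l(Q)$ for any $x \in Q_1, y \in Q_2$. In particular the hypothesis \eqref{hyperbolicballs} on comparable distance to the boundary is automatic for the pair $(Q_1, Q_2)$ with a constant depending only on $c_0, n$.

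**Main step.** The crux is to produce a curve joining (a point of) $Q_1$ to (a point of) $Q_2$ that stays uniformly away from $\partial\Omega$, so that Lemma \ref{local_dia_implies_len} (or \ref{local_dia_implies_len_2}) applies. Here is where the radial structure enters. Consider the radial geodesic $\Gamma_{q_1}$ from $x_0$ to the center $q_1$ of $Q_1$ and the radial geodesic $\Gamma_{q_2}$ from $x_0$ to the center $q_2$ of $Q_2$; both meet $\overline{B}_Q$ (by the argument in the proof of Lemma \ref{mainlem_0}, since $Q_1, Q_2 \in \mathcal{A}(Q)$ forces $\overline{(c_0 Q_i)}_c \cap \overline{B}_Q \neq \emptyset$, and one can also arrange the geodesics themselves to enter a bounded neighbourhood of $Q$). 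Thus $\Gamma_{q_1}$ and $\Gamma_{q_2}$ share the endpoint $x_0$ and both come within quasihyperbolic distance $\lesssim_{c_0}$ of the fixed cube $Q$; applying condition (\textit{ii}) of the definition of $(c_0,c,R)$-radially hyperbolic (Definition \ref{approx_domain}) to the pair $\Gamma_{q_1}, \Gamma_{q_2}$ — whose intersection contains $x_0$, so we may need to also handle the case that they only meet at $x_0$, but then the hypothesis $\mathcal{T}(Q_1)\cap\mathcal{T}(Q_2)\neq\emptyset$ together with ball separation still forces the relevant portions to be close — gives that $\Gamma_{q_1}\cup\Gamma_{q_2}$ satisfies either the $(c,R)$-length or $(c,R)$-diameter Gehring--Hayman property. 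Either way, combined with the observation that the sub-geodesics of $\Gamma_{q_1}, \Gamma_{q_2}$ beyond their last common point lie in $100 c_0 Q$ and so have $\di_\Omega \gtrsim_{c_0} l(Q)$ along them, one gets a curve from $q_1$ to $q_2$ (namely, go back along $\Gamma_{q_1}$ to the last common point of the two geodesics and then out along $\Gamma_{q_2}$, or more simply use the portions near $Q$) that stays at distance $\gtrsim l(Q)/M_0$ from $\partial\Omega$ with $M_0 = M_0(c_0,c,n)$. This is exactly the hypothesis needed in Lemma \ref{local_dia_implies_len}: taking $R$ (the radial-hyperbolicity parameter) larger than the threshold $R_0(c_0,c,n)$ arising from these comparability constants, Lemma \ref{local_dia_implies_len} yields $k_\Omega(q_1, q_2) \leq c'(c_0,c,n)$, whence $\dist_k(Q_1, Q_2) \leq M$ for $M = M(c_0,c,n)$ after adding the bounded quasihyperbolic diameters of $Q_1$ and $Q_2$ themselves.

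**Where the difficulty lies.** The delicate point is the logical bookkeeping around the two clauses (a), (b) in the definition of $\mathcal{A}(Q)$ and the case distinction in condition (\textit{ii}) of Definition \ref{approx_domain}: I must make sure that in every configuration I actually have two \emph{radial} geodesics (emanating from $x_0$) passing near $Q_1$ and near $Q_2$ respectively, and that the ``either (a) or (b)'' dichotomy does not obstruct the conclusion — it does not, because Lemma \ref{local_dia_implies_len} only needs the \emph{diameter} Gehring--Hayman bound plus ball separation, and Lemma \ref{local_dia_implies_len_2} upgrades the \emph{length} version to the same conclusion, so both branches feed into a uniform $k_\Omega$ bound. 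A secondary technical nuisance is verifying that the relevant pairs of points genuinely satisfy the smallness conditions $\Delta_\Omega \leq R$ (resp. $\Lambda_\Omega \leq R$) needed to invoke the local Gehring--Hayman properties; but since all points involved lie in $100 c_0 Q$ with $\di_\Omega$ comparable to $l(Q)$, both $\delta_\Omega$ and $\lambda_\Omega$ between them are $\lesssim_{c_0} l(Q) \lesssim_{c_0} \di_\Omega$ of either point, so these conditions hold once $R \geq R_0(c_0,c,n)$, which is precisely how the threshold $R_0$ is chosen. Assembling these estimates gives the claimed bound $M = M(c_0,c,n)$ and completes the proof.
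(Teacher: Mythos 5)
You correctly identify the easier half of the argument: the comparability $d_\Omega(q_1)\simeq d_\Omega(q_2)\simeq l(Q)$ and the bound $\delta_\Omega(q_1,q_2)\lesssim_{c_0,n} d_\Omega(q_1)$ coming from Lemma \ref{mainlem_0}; the ball-separation device (against a competing curve in $\Omega_m^{(1)}$) that makes the concatenated radial geodesics from $q_1$ through $x_0$ to $q_2$ a curve staying at distance $\gtrsim_{c_0} d_\Omega(q_1)$ from $\partial\Omega$; and the appeal to Lemma \ref{local_dia_implies_len} when condition (\textit{ii})(b) of Definition \ref{approx_domain} yields the \emph{diameter} Gehring--Hayman property for $\Gamma_{q_1}\cup\Gamma_{q_2}$. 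This matches the paper's treatment of that branch.

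The gap is the entire \emph{length} Gehring--Hayman branch, case (\textit{ii})(a), which the paper singles out as the hard part. To invoke the $(c,R)$-length Gehring--Hayman property of $\Gamma_{q_1}\cup\Gamma_{q_2}$ at the pair $(q_1,q_2)$ you must first know $\Lambda_\Omega(q_1,q_2)\leq R$, i.e.\ $\lambda_\Omega(q_1,q_2)\lesssim d_\Omega(q_1)\wedge d_\Omega(q_2)$; you have only the corresponding \emph{diameter} bound, and the curve through $x_0$ staying away from the boundary has length on the order of the distance from $x_0$ to $Q$, which is not controlled by $l(Q)$. Your side-remark that ``the sub-geodesics of $\Gamma_{q_1},\Gamma_{q_2}$ beyond their last common point lie in $100c_0Q$'' is false: only the endpoints $q_1,q_2$ and $Q$ itself are confined there, while the geodesics emanate from $x_0$, which may be far from $Q$. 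Your assertion that Lemma \ref{local_dia_implies_len_2} ``upgrades the length version to the same conclusion'' also misreads that lemma: both Lemma \ref{local_dia_implies_len} and Lemma \ref{local_dia_implies_len_2} \emph{assume} the diameter Gehring--Hayman property as a hypothesis, so neither is usable starting from the length version alone. Establishing $\lambda_\Omega(x_1,x_2)\lesssim_{c_0,c,n} d_\Omega(x_1)$ under only the length assumption is exactly what the second half of the paper's proof does — the auxiliary points $y_1,y_2$ chosen at minimal intrinsic distance with $\lambda_\Omega(x_i,y_{3-i})\leq c_0 d_\Omega(x_i)$, the lower bound of Lemma \ref{sublem_1} along the radial geodesics, the case split on whether $y_i\in\Gamma_i(x_0,x_i)$ or $\Gamma_i(x_i,x)$, and the subsequent Cases 1 and 2 with repeated ball-separation comparisons — and your proposal does not engage with any of it.
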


The proof of Lemma \ref{mainlem_1} is rather technical and we postpone it to Section \ref{Hyperbolic}.
%For defining a partition of unity using the sets of the form $B_Q$ for $Q\in\mathcal{P}_m$ with which the polynomial approximation works, we would require for them to have, apart from bounded overlap, the property that a Sobolev function does not oscillate too much when there is intersection. The next lemma together with Lemma \ref{chaining} guarantees that this is not a problem. We also note that the matter of intersection of the sets $B_Q$ reduces to the study of cubes with intersecting trails as seen below.
\begin{rem}\label{simplification}
We note that if a global diameter Gehring-Hayman property or a suitable local quantitative version of it holds then the proof is simpler. In fact Lemma \ref{mainlem_2} below then follows directly from hypothesis without Lemma \ref{mainlem_1} being necessary. Indeed, if $Q_1,Q_2\in\mathcal{P}_m$, then pairs of points from $Q_1$ and $Q_2$ can be joined by a curve which is uniformly away from the boundary, namely contained in $\Omega_m^{(1)}$; and pairs of such points are comparably distant from the boundary. Lemma \ref{mainlem_2} then follows from Lemma \ref{local_dia_implies_len}.
%\begin{enumerate}
%\item[(\textit{i})] 
%\item[(\textit{ii})] Again in the case when the diameter Gehring-Hayman condition is true for intersecting radial geodesics, we do not need to consider the cover of sets $B_Q$ with trails in Lemma \ref{mainlem_2}, as seen in the argument above where Lemma \ref{local_dia_implies_len} suffices.
%\item[(\textit{iii})]In case we have that only the length Gehring-Hayman property holds, above argument does not work. Lemma \ref{mainlem_1} is useful for this situation.
%\end{enumerate}
\end{rem}

\begin{lem}\label{mainlem_2}With the above definitions the following hold:
\begin{enumerate}
 \item [(\textit{i})]For any $Q$ and $Q'$ in $\mathcal{P}_m$ such that $B_Q\cap B_{Q'}\neq\emptyset$, we have that $\dist_k(Q,Q')\leq c'(c_0,c,n)$.
  %\item [(\textit{ii})]Given $Q\in\mathcal{P}_m, $|\{Q''\in\mathcal{U}_m\cup\mathcal{U}_m':B_{Q''}\cap B_Q\neq \emptyset\}|\leq c'(c_0,n).$
  %\item[(\textit{iii})]Given $Q\in\mathcal{U}_m\cup\mathcal{U}_m'$, $|\{Q''\in\mathcal{A}(Q):Q''\cap B_Q\neq \emptyset\}|\leq c'(c_0,n).$
  \item[(\textit{ii})]For any $Q\in\mathcal{P}_m$ and $Q'\in\mathcal{A}(Q)$ such that $B_Q\cap Q'\neq\emptyset$, we have that $\dist_k(Q,Q')\leq c'(c_0,c,n)$.
\end{enumerate}
\end{lem}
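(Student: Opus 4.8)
The plan is to deduce both parts of Lemma \ref{mainlem_2} from Lemma \ref{mainlem_1} together with the volume-comparison bounds already established in Lemma \ref{mainlem_0}. The key point is that Lemma \ref{mainlem_1} controls the quasihyperbolic distance between two cubes in a common cover $\mathcal{A}(Q)$ whose trails intersect, and the definition of $\mathcal{A}(Q)$ is precisely engineered so that cubes whose neighbourhoods meet $B_Q$ land in $\mathcal{A}(Q)$; so the strategy in each case is to exhibit the relevant cubes as members of a single cover $\mathcal{A}(Q)$ with overlapping trails.

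For part (\textit{ii}), fix $Q\in\mathcal{P}_m$ and $Q'\in\mathcal{A}(Q)$ with $B_Q\cap Q'\neq\emptyset$. First I would note that $Q$ itself belongs to $\mathcal{A}(Q)$ (take $z\in Q\subset B_Q$, using case (a) in the definition of $\mathcal{A}(\cdot)$; recall $Q\in\mathcal{P}_m\subset\mathcal{P}_m^{(1)}\subset\mathcal{W}_m^{(1)}$). Next I must check that $\mathcal{T}(Q)\cap\mathcal{T}(Q')\neq\emptyset$. Since $B_Q\cap Q'\neq\emptyset$, pick a point $z$ in this intersection; then $z\in Q'\subset\mathcal{T}(Q')$, and I need to see $z$ (or some nearby point reachable by a radial geodesic hitting both) lies in $\mathcal{T}(Q)$. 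Here one uses that $z\in B_Q=(\tfrac{11}{10}c_0Q)_c$ lies on the $x_0$-side relative to the relevant separating neighbourhood, so a radial geodesic $\Gamma_z$ to $x_0$ must pass through $\overline{(c_0Q)}_c$ and hence, by the ball separation property applied to $\Gamma_z$ exactly as in the proof of Lemma \ref{mainlem_0}, through a Whitney cube within bounded quasihyperbolic distance of $Q$; absorbing that bounded constant, one reduces to the case that the radial geodesic actually meets $Q$, giving a point of $\mathcal{T}(Q)\cap\mathcal{T}(Q')$. Then Lemma \ref{mainlem_1} applied with this $Q$ (as the base cube), $Q_1=Q$, $Q_2=Q'$ yields $\dist_k(Q,Q')\leq M(c_0,c,n)$, up to the harmless additive constant just mentioned.

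For part (\textit{i}), let $Q,Q'\in\mathcal{P}_m$ with $B_Q\cap B_{Q'}\neq\emptyset$. The idea is to produce a third cube $\tilde Q\in\mathcal{P}_m$ whose cover $\mathcal{A}(\tilde Q)$ contains both $Q$ and $Q'$ with overlapping trails, or more simply to show directly that $Q'\in\mathcal{A}(Q)$ (up to replacing $Q'$ by a bounded chain). Since $B_Q\cap B_{Q'}\neq\emptyset$, and both $B_Q,B_{Q'}$ are components of dilated cubes meeting $\Omega$, the cubes $Q$ and $Q'$ are comparably large and lie within bounded Euclidean (hence, by \eqref{quasihyperbolic_growth_2} and the fact that they are comparably distant from the boundary, bounded quasihyperbolic) distance of each other — but this Euclidean proximity alone is not enough, since $B_Q\cap B_{Q'}$ could lie inside a thin tentacle far, in $\Omega$, from $x_0$. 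So instead I would argue: pick $z\in B_Q\cap B_{Q'}$ and a radial geodesic $\Gamma_z$; as in Lemma \ref{mainlem_0}, $\Gamma_z$ meets a cube $Q_1'$ with $\overline{(c_0Q_1')}_c\cap\overline{B}_Q\neq\emptyset$ and a cube with the analogous property for $B_{Q'}$, and one may take these to be a single cube $Q''$ on $\Gamma_z$ (since $\Gamma_z$ is one curve, its first crossing into the relevant region serves both). Then $Q\in\mathcal{A}(Q'')$ and $Q'\in\mathcal{A}(Q'')$ via case (b), with $z\in\mathcal{T}(Q)\cap\mathcal{T}(Q')\cap\mathcal{T}(Q'')$, so all three trails share the point $z$. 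Applying Lemma \ref{mainlem_1} to the base cube $Q''$ with the pairs $(Q,Q'')$ and $(Q',Q'')$ gives $\dist_k(Q,Q'')\leq M$ and $\dist_k(Q',Q'')\leq M$, and the triangle inequality for $k_\Omega$ finishes the bound $\dist_k(Q,Q')\leq 2M+\mathrm{diam}_k Q''\leq c'(c_0,c,n)$.

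The main obstacle I anticipate is the bookkeeping in identifying the correct base cube on the radial geodesic $\Gamma_z$ through the point of intersection and verifying that the ball separation property forces $\Gamma_z$ to cross the neighbourhoods $\overline{(c_0Q)}_c$ and $\overline{(c_0Q')}_c$ (rather than merely pass near them) — this is where the precise relation $B_Q=(\tfrac{11}{10}c_0Q)_c$ and the definition of $Q'|Q$ being false (for $Q,Q'\in\mathcal{P}_m$) must be used, mirroring the argument in Lemma \ref{mainlem_0}. Once the two cubes in question are placed in a common cover with a shared trail point, Lemma \ref{mainlem_1} does all the real work, and the remaining estimates are routine volume comparisons and an application of the triangle inequality for $k_\Omega$.
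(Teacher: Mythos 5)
Your overall strategy is sound --- reduce both parts to Lemma \ref{mainlem_1} using the cover $\mathcal{A}(\cdot)$ --- but there are two real gaps, and both stem from trying to land the relevant cubes in a single application of Lemma \ref{mainlem_1} rather than chaining.

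First, in part (\textit{ii}), you assert that after "absorbing a bounded constant" you may assume $z\in\mathcal{T}(Q)$. This is not a reduction you can make: if the radial geodesic $\Gamma_z$ does not meet $Q$, then $z\notin\mathcal{T}(Q)$, and no nearby cube being close to $Q$ changes that. What the ball-separation argument from Lemma \ref{mainlem_0} actually gives you is a cube $Q''\in\mathcal{A}(Q)$ with $z\in\mathcal{T}(Q'')$; you then know $z\in\mathcal{T}(Q'')\cap\mathcal{T}(Q')$, so Lemma \ref{mainlem_1} bounds $\dist_k(Q'',Q')$. But you still need to connect $Q''$ to $Q$, and that is a separate trail-intersection problem of the same form. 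Second, in part (\textit{i}), your proposed base cube $Q''$ is a Whitney cube on $\Gamma_z$, but $\mathcal{A}(\cdot)$ is defined only for cubes in $\mathcal{P}_m$; there is no guarantee that $Q''\in\mathcal{P}_m$. Moreover, for $Q\in\mathcal{A}(Q'')$ via case (b) you would need a point $w\in B_{Q''}$ with a radial geodesic meeting $Q$, and for $z\in\mathcal{T}(Q)$ you again need $\Gamma_z$ to meet $Q$ itself --- neither is established.

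The paper avoids both difficulties by a topological chaining argument: $\mathcal{A}(Q)\cup\mathcal{A}(Q')$ has cardinality bounded by $c'(c_0,n)$ (Lemma \ref{mainlem_0}), each trail $\mathcal{T}(Q'')$ is connected and relatively closed in $\Omega$, the trails cover $B_Q\cup B_{Q'}$ and each one meets it, and $B_Q\cup B_{Q'}$ is connected. This forces the existence of an enumeration $Q_1,\ldots,Q_s$ (with $s\leq c'$) of $\mathcal{A}(Q)\cup\mathcal{A}(Q')$ in which consecutive trails intersect; Lemma \ref{mainlem_1} then applies to each consecutive pair, and the triangle inequality for $k_{\Omega}$ closes the argument. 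Your instinct that "Lemma \ref{mainlem_1} does all the real work" is exactly right, but the bridge from the hypothesis $B_Q\cap B_{Q'}\neq\emptyset$ (or $B_Q\cap Q'\neq\emptyset$) to the hypothesis $\mathcal{T}(Q_i)\cap\mathcal{T}(Q_{i+1})\neq\emptyset$ is a connectivity argument over the whole cover, not a pointwise one on a single radial geodesic.
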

\begin{proof}
 By Lemma \ref{mainlem_0}, we have that $\#(\mathcal{A}(Q)\cup\mathcal{A}(Q'))$ is finite and bounded above by $c'=c'(c_0,n)$. Also note that $B_Q\cup B_{Q'}$ is connected and $\mathcal{T}(Q'')$ is relatively closed (by Arzela-Ascoli) and connected in $\Omega$ for each $Q''\in \mathcal{A}(Q)\cup\mathcal{A}(Q')$. Moreover, $\{\mathcal{T}(Q''):Q''\in\mathcal{A}(Q)\cup \mathcal{A}(Q')\}$ covers $B_Q\cup B_{Q'}$ efficiently (that is, each $\mathcal{T}(Q'')$ necessarily intersects $B_Q\cup B_{Q'}$). Thus we may find an enumeration of $\mathcal{A}(Q)\cup\mathcal{A}(Q')$ say $Q_1,\ldots,Q_s$; $s\leq c'$ such that $\mathcal{T}(Q_i)\cap \mathcal{T}(Q_{i+1})\neq \emptyset$ for $1\leq i<s$.
 Part (\textit{i}) of the Lemma then follows from Lemma \ref{mainlem_1}.
 
 Part (\textit{ii}) follows arguing similarly as in part (\textit{i}), since in this case also $Q$ and $Q'$ can again be connected by a chain of trails of cubes in $\mathcal{A}(Q)$ as before, consecutive members of which have intersecting trails when enumerated suitably, so that Lemma \ref{mainlem_1} applies.
\end{proof}

Recall that we fixed a cube $Q_i\in\mathcal{Q}_i$, called the assigned cube of $\mathcal{Q}_i$ and $B_i$, when $\mathcal{Q}_i\in\mathcal{Q}^{(m)}$. When defining the approximating function, the values of $u$ in $Q_i$ will be used to assign values to the larger set $B_i$.
\begin{lem}\label{mainlem_3}
If $Q\in\mathcal{Q}_i$, for $\mathcal{Q}_i\in\mathcal{Q}^{(m)}$, then $\dist_k(Q,Q_i)\leq c'$, where $c'=c'(c_0,c,n)$. 
\end{lem}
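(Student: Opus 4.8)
The plan is to chain together the estimates already established, using the structure of the collection $\mathcal{Q}_i$ coming from Lemma \ref{bound_belong} and Lemma \ref{mainlem_2}. Recall that $\mathcal{Q}_i$ equals some $\tilde{\mathcal{Q}}_j = \bigcup_{1\leq s\leq l}\mathcal{V}_{j_s}$, a maximal collection in the sense of Lemma \ref{bound_belong}, so $l\leq l(c_0,n)$ and all the member cubes of $\mathcal{Q}_i$ lie in $(CQ_i)_c$ for $C = C(c_0,n)$, where $Q_i$ is the assigned cube. In particular $\#\mathcal{Q}_i$ is bounded by a constant depending only on $c_0$ and $n$, and all its members are Whitney cubes of comparable size to $Q_i$ (all lie in $\mathcal{P}_m$, hence have edge length in $[2^{-m},2^{-(m-2)})$).

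First I would observe that since $\mathcal{Q}_i$ is built from the collections $\mathcal{V}_{j_s}$, each of which is the boundary-cube collection $\mathcal{V}_k$ of some component $V_k$ (so that $\overline{V}_k\cap\overline{(c_0Q)_c}\neq\emptyset$ for every $Q\in\mathcal{V}_k$), the neighbourhoods of any two cubes in a single $\mathcal{V}_k$ satisfy $B_Q\cap B_{Q'}\neq\emptyset$: indeed both $\overline{(c_0Q)_c}$ and $\overline{(c_0Q')_c}$ meet $\overline{V}_k$, and by Remark \ref{alternate_defn}-type reasoning (or directly from part (\textit{ii}) of Lemma \ref{relabel} and the fact that the cubes in $\mathcal{V}_k$ are comparably sized and lie in a bounded set) their slightly enlarged neighbourhoods $B_Q=(\tfrac{11}{10}c_0Q)_c$ overlap. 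Hence by Lemma \ref{mainlem_2}(\textit{i}), $\dist_k(Q,Q')\leq c'(c_0,c,n)$ for any two cubes in the same $\mathcal{V}_k$. Next, because consecutive collections $\mathcal{V}_{j_s}$ and $\mathcal{V}_{j_{s+1}}$ in the maximal family for $Q_i$ both contain $Q_i$ itself (that is the defining property in Lemma \ref{bound_belong}: all the $\mathcal{V}_{j_s}$ share the cube $Q_j$), the union $\bigcup_s\mathcal{V}_{j_s}$ is ``connected through $Q_i$''. Therefore any $Q\in\mathcal{Q}_i$ lies in some $\mathcal{V}_{j_s}$ which also contains $Q_i$, so a single application of the previous step gives $\dist_k(Q,Q_i)\leq c'(c_0,c,n)$.

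So the proof reduces to: (1) verifying the overlap claim $B_Q\cap B_{Q'}\neq\emptyset$ for $Q,Q'$ in a common $\mathcal{V}_k$, which is the one genuinely geometric point and follows from the ball separation property and the comparability/boundedness in Lemma \ref{relabel}(\textit{ii}) — if necessary one passes through $\overline{V}_k$ itself, noting $\overline{V}_k$ meets both $\overline{(c_0Q)_c}$ and $\overline{(c_0Q')_c}$, and that points of $V_k$ within distance $\sim 2^{-m}$ of $Q$ can be joined inside $\Omega_m^{(1)}$ (since $V_k$ is composed of large cubes when it is one of the relabeled $V$'s — wait, no: the $V_i$ are exactly the thin tentacles, so instead one uses that the member cubes of $\mathcal{V}_k$ all lie in $(CQ)_c$ by Lemma \ref{relabel}(\textit{ii}), forcing the enlarged neighbourhoods to intersect); and (2) an elementary chaining argument of bounded length, $l\leq l(c_0,n)$ many steps, each contributing at most $c'(c_0,c,n)$ to the quasihyperbolic distance, with the key combinatorial input that every $\mathcal{V}_{j_s}$ in the maximal family contains the common cube $Q_j=Q_i$.

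The main obstacle I anticipate is pinning down step (1) cleanly — the precise justification that $B_Q\cap B_{Q'}\neq\emptyset$ (as opposed to merely $\dist_k(Q,Q')$ being controlled by some other route) for cubes in the same $\mathcal{V}_k$, since this is what lets us invoke Lemma \ref{mainlem_2}(\textit{i}) directly rather than re-running a covering-by-trails argument. If that overlap is not literally true with the constant $\tfrac{11}{10}c_0$, the fallback is to argue exactly as in the proof of Lemma \ref{mainlem_2}: the cubes of $\mathcal{V}_k$ all lie in $(CQ)_c$ and cover (through their trails, via $\mathcal{A}(\cdot)$) the connected set $\overline{V}_k$ efficiently, so by Lemma \ref{mainlem_0} and Lemma \ref{mainlem_1} one obtains a bounded chain in the quasihyperbolic metric between any two of them; this is robust and avoids delicate constant-chasing, at the cost of a slightly longer argument. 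Either way the total quasihyperbolic distance $\dist_k(Q,Q_i)$ is bounded by a constant $c'=c'(c_0,c,n)$, which is the assertion.
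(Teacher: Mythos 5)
Your proposal follows essentially the same route as the paper: use the bounded cardinality of $\mathcal{Q}_i$ (Lemma \ref{bound_belong}), the connectivity of the union of the enlarged neighbourhoods of cubes in $\mathcal{Q}_i$, and the estimate of Lemma \ref{mainlem_2} applied along a chain to bound $\dist_k(Q,Q_i)$ by a constant depending only on $c_0$, $c$, $n$. The paper's own proof is itself terse --- it simply asserts that $\bigcup_{Q\in\mathcal{V}_s}\overline{(c_0Q)}_c$ is connected, deduces that $\bigcup_{Q\in\mathcal{Q}_i}B_Q$ is connected, and then ``chains'' by induction via (the proof of) Lemma \ref{mainlem_2} --- so your proposal is at least as detailed.

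One small but worth-noting point: your primary argument asserts that any two cubes in a common $\mathcal{V}_k$ satisfy $B_Q\cap B_{Q'}\neq\emptyset$ directly, which would let you invoke Lemma \ref{mainlem_2}(\textit{i}) in a single shot. This is stronger than what the paper actually uses: the paper only needs $\bigcup_{Q\in\mathcal{Q}_i}B_Q$ to be \emph{connected} (so that a chain $Q=Q^{(0)},\dots,Q^{(s)}=Q_i$ with $B_{Q^{(j)}}\cap B_{Q^{(j+1)}}\neq\emptyset$ exists, with $s$ bounded by $\#\mathcal{Q}_i$), not that all the $B_Q$ pairwise overlap. Since $B_Q$ is a \emph{component} of $\tfrac{11}{10}c_0Q\cap\Omega$, direct pairwise overlap is not immediate even for comparably sized nearby cubes, so you were right to flag this as the delicate point; your fallback --- chaining through trails and using Lemmas \ref{mainlem_0} and \ref{mainlem_1} exactly as in the proof of Lemma \ref{mainlem_2} --- is in fact what the paper relies on, and the cleaner thing to commit to. Also note the paper only says ``fix a cube $Q_i\in\mathcal{Q}_i$''; it does not literally say the assigned cube is the hub cube of Lemma \ref{bound_belong}. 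This is harmless (one can always add one extra link through the hub), but as written your argument implicitly assumes the assigned cube is the hub.
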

\begin{proof}
Note that $\underset{Q\in\mathcal{V}_s}\cup \overline{(c_0Q)}_c$ are connected and $\#\mathcal{V}_s$ are uniformly bounded for the sets $V_s$ (recall from property (\textit{iii}) of the relabeled sets in the decomposition). Thus, $\underset{Q\in\mathcal{Q}_i}\cup B_Q$ is connected, since $\{V_s\}_s$ are disjoint. Then the claim follows by induction and proof of Lemma \ref{mainlem_2} after noting that by Lemma \ref{bound_belong}, $\#\mathcal{Q}_i$ is bounded by a constant.
\end{proof}

\iffalse
Given $Q\in\mathcal{W}$, let $Q_1,Q_2,\ldots Q_s$ be cubes in $\mathcal{P}_m$ such that $Q\cap B_{Q_j}\neq\emptyset$ for $j=1,2,\ldots,s$. For each $j$ fix a quasihyperbolic geodesic $\gamma_j$ joining a fixed point in $Q_j$ to a fixed point of $Q$. We denote by $\mathcal{F}_m(Q)$ the collection of all Whitney cubes that intersect the curves $\gamma_j$ for some $j$. We have the following Lemma.

\begin{lem}\label{overlap_2}
 Given $Q\in\mathcal{P}_m$, $|\{Q':Q\in\mathcal{F}_m(Q') \}|<c(c_0,n)$.
\end{lem}
\begin{proof}
 By the same reasoning as in Lemma \ref{mainlem_2}, we have that $\dist_k(Q'',Q)\leq c(c_0,n)$, if $B_{Q''}\cap Q\neq\emptyset$, where $Q''\in\mathcal{P}_m$. Since the quasihyperbolic diameter of a Whitney cube is bounded by an absolute constant, there are a bounded number of Whitney cubes that intersect a quasihyperbolic geodesic joining $Q''$ and $Q$. The bound depends only on $n$ and $c_0$. There are only a bounded number of Whitney cubes $Q''\in\mathcal{P}_m$ such that $B_{Q''}\cap Q\neq\emptyset$, with bound depending on $n$ and $c_0$. The claim thus follows.
 \end{proof}
\fi

%The next lemma also follows from volume comparison.
%\begin{lem}
 %For $Q'\in\mathcal{W}$, $\{Q\in\mathcal{P}_m:Q'\cup %B_Q\neq\emptyset\}|\leq c(c_0,n).$

For each $Q,Q'\in \mathcal{W}_m^{(1)}$ such that $\dist_k(Q,Q')\leq c$, fix a chain of Whitney cubes $\{Q=Q_1,\ldots,Q_s=Q'\}$ with $s\leq c'=c'(c)$ forming a continuum joining $Q$ and $Q'$ and enumerated such that consecutive cubes intersect in faces. Denote by $\mathcal{F}(Q,Q')$ this chain. The selection may be done so that $\mathcal{F}(Q,Q')=\mathcal{F}(Q',Q)$ for any pair of cubes $Q,Q'\in\mathcal{W}_m^{(1)}$. We will need the following lemma in the estimates below.

Given $Q''\in\mathcal{W}_m^{(1)}$, denote by $\mathcal{F}(Q'')$ the set of pairs $(Q,Q')$ where $Q''\in\mathcal{F}(Q,Q')$ and one of the following holds:
\begin{enumerate}
 \item[(\textit{i})]$Q\in\mathcal{P}_m$ and $Q'\in\mathcal{A}(Q)$ such that $Q'\cap B_Q\neq \emptyset$ or
 \item[(\textit{ii})]  $Q,Q'\in\mathcal{P}_m$ \,such that $B_Q\cap B_{Q'}\neq\emptyset$ or
 \item[(\textit{iii})]  $Q=Q_i\in\mathcal{Q}_i$ and $Q'=Q_j\in\mathcal{Q}_j$, \, where $Q_i$ and $Q_j$ are the assigned cubes of the tentacles $B_i$ and $B_j$ respectively such that $B_i\cap B_j\neq\emptyset$ or
 \item[(\textit{iv})]  $Q\in\mathcal{P}_m$ and $Q'=Q_i\in\mathcal{Q}_i$,\, where $Q_i$ is the assigned cube of the tentacle $B_i$ such that $B_Q\cap B_i\neq\emptyset$.
\end{enumerate}
Then we have the following.
\begin{lem}\label{overlap_2}
 Given $Q''\in\mathcal{W}_m^{(1)}$, $\#\mathcal{F}(Q'')\leq c'(c_0,c,n)$.
\end{lem}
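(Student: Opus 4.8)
The plan is to show that the number of pairs $(Q,Q')$ in $\mathcal{F}(Q'')$ is controlled by first controlling, for a fixed $Q''$, the total number of candidate cubes that can appear as a first or second coordinate of such a pair, and then observing that the length of each chain $\mathcal{F}(Q,Q')$ is absolutely bounded. Concretely, if $Q'' \in \mathcal{F}(Q,Q')$, then since every cube in the chain $\mathcal{F}(Q,Q')$ has length comparable to $l(Q)$ (and to $l(Q')$) and the chain has at most $c' = c'(c)$ members, we get $l(Q'') \simeq l(Q)$ with comparability constants depending only on $c$, and moreover $\dist_k(Q,Q'') \leq c'$ and $\dist_k(Q',Q'') \leq c'$. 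Hence $Q$ and $Q'$ both lie within a fixed quasihyperbolic distance $c'(c_0,c,n)$ of $Q''$, which forces $\dist(Q,Q''), \dist(Q',Q'') \leq C(c_0,c,n)\, l(Q'')$ in the Euclidean metric as well. By volume comparison, the number of such candidate cubes $Q$ (resp.\ $Q'$) is bounded by a constant $N = N(c_0,c,n)$.

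Next I would argue that this candidate bound, while it already controls coordinates arising from cases (\textit{i}) and (\textit{ii}) directly, needs a small extra remark for cases (\textit{iii}) and (\textit{iv}): there the relevant cubes are the \emph{assigned} cubes $Q_i$ of tentacles $B_i$, so I must also bound the number of tentacles $B_i$ whose assigned cube lies in the fixed ball $B(Q'', C\, l(Q''))$ among the $N$ candidate cubes. But since distinct tentacles have distinct assigned cubes (or at any rate at most boundedly many tentacles are assigned to a given cube — indeed each assigned cube $Q_i$ belongs to exactly one collection $\mathcal{Q}_i$ by the maximality in the construction of $\mathcal{Q}^{(m)}$), the number of such tentacles is again $\lesssim N$. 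Combining, the set of possible first coordinates and the set of possible second coordinates of pairs in $\mathcal{F}(Q'')$ each have cardinality $\lesssim N(c_0,c,n)$, so the number of pairs is $\lesssim N^2$, and the lemma follows with $c'(c_0,c,n) = N^2$.

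The main obstacle I anticipate is not the counting itself but making the chaining estimate $l(Q'') \simeq l(Q)$ and $\dist_k(Q,Q'') \leq c'$ fully rigorous from the sole hypothesis $Q'' \in \mathcal{F}(Q,Q')$: this requires recalling that $\mathcal{F}(Q,Q')$ was defined (just before the lemma) to be a chain of at most $c'(c)$ Whitney cubes meeting consecutively in faces, that adjacent Whitney cubes have comparable side lengths, and that a face-adjacent chain of bounded length has bounded quasihyperbolic diameter — all standard Whitney-decomposition facts. Once $\dist_k(Q,Q'') \leq c'$ is in hand, the passage to a Euclidean ball of radius $C\, l(Q'')$ uses only $\dist_k(x,y) \geq |\log(d_\Omega(x)/d_\Omega(y))|$ together with $\dist_k \geq \log(1 + \lambda_\Omega/(d_\Omega \wedge d_\Omega))$ from \eqref{quasihyperbolic_growth_2}, exactly as in the proof of Lemma \ref{mainlem_0}, so the volume-comparison count goes through verbatim. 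The remaining cases (\textit{iii}) and (\textit{iv}) then reduce to (\textit{i}) and (\textit{ii}) once one notes (via Lemmas \ref{mainlem_2} and \ref{mainlem_3}) that an assigned cube $Q_i$ is within bounded quasihyperbolic distance of every cube in $\mathcal{Q}_i$, so intersections $B_i \cap B_j \neq \emptyset$ or $B_Q \cap B_i \neq \emptyset$ again place the relevant cubes in the fixed ball around $Q''$.
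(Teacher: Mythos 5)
Your proposal is correct and follows essentially the same route as the paper: use Lemmas \ref{mainlem_2} and \ref{mainlem_3} to see that all pairs in cases (\textit{i})--(\textit{iv}) lie at bounded quasihyperbolic distance, deduce that $Q''\in\mathcal{F}(Q,Q')$ forces $\dist_k(Q,Q'')\vee\dist_k(Q',Q'')\leq c'(c_0,c,n)$, and conclude by volume comparison on Whitney cubes. The extra discussion of tentacles in cases (\textit{iii}) and (\textit{iv}) is harmless but not needed, since $\mathcal{F}(Q'')$ is a set of pairs of cubes and the count is automatically dominated by $N^2$ once each coordinate ranges over at most $N$ cubes.
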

\begin{proof}
 By Lemmas \ref{mainlem_2} and \ref{mainlem_3} we have that for any $(Q,Q')\in\mathcal{F}(Q'')$ $$\dist_k(Q,Q'')\vee\dist_k(Q',Q'')\leq c'(c_0,c,n).$$ Then the claim follows by volume comparison.
\end{proof}

\subsection{Proof of Theorem \ref{main_1}}

We begin by defining a partition of unity. Recall the decomposition of $\Omega$ obtained in the previous section (Figure \ref{tentacle}). 
Given $Q\in\mathcal{U}^{(m)}$, let $\hat{\psi}_Q$ be a smooth function such that 
$$\hat{\psi}_Q|_{(c_0Q)_c}\equiv 1,\; \text{spt}(\hat{\psi}_Q)\subset \frac{11}{10}c_0Q$$ 
and $|\nabla^{\alpha}\hat{\psi}_Q(x)|\leq C(\alpha)2^{m|\alpha|}$ for all $x\in\Omega$ and $0\leq|\alpha|\leq k$. 

For $\mathcal{Q}_i\in\mathcal{Q}^{(m)}$, let %$Q_i\in\mathcal{Q}_i$ 
and let $\hat{\varphi}_{i}$ be a smooth function such that
$$\hat{\varphi}_{i}|_{\underset{V_j\in\mathcal{B}_i}\bigcup \overline{V}_j\;\cup\; \underset{Q\in\mathcal{Q}_i}\bigcup \overline{(c_0Q)}_c}\equiv 1,\; \text{spt}(\hat{\varphi}_{i})\subset \underset{V_j\in\mathcal{B}_i}\bigcup B(V_j,2^{-m}/100)\;\cup\; \underset{Q\in\mathcal{Q}_i}\bigcup (\frac{11}{10}c_0Q)$$ 
and $|\nabla^{\alpha}\hat{\varphi}_{i}(x)|\leq C(\alpha)2^{m|\alpha|}$ for all $x\in\Omega$ and $0\leq|\alpha|\leq k$. 
%Recall from Section \ref{notation} that the closures are with respect to the relative topology of $\Omega$.

For $U_i$, $0\leq i\leq l_m$, let $\hat{\xi}_i$ be a smooth function such that
$$\hat{\xi}_i|_{U_i}\equiv 1,\; \text{spt}(\hat{\xi}_i)\subset B_{U_i}$$ and $|\nabla^{\alpha}\hat{\xi}_i|\leq C(\alpha)2^{m|\alpha|}$ for all $x\in\Omega$ and $0\leq |\alpha|\leq k$. The functions above are obtained by standard mollification of suitable indicator functions related to the sets involved.

 We define the partition of unity by setting 
$$\varphi_i(x)=\frac{\hat{\varphi_i}(x)}{\underset{Q\in\mathcal{U}^{(m)}}\sum\hat{\psi}_{Q}(x)+\sum_{i=0}^{l_m}\hat{\xi}_i(x)+\underset{\mathcal{Q}_i\in\mathcal{Q}^{(m)}}\sum\hat{\varphi}_{i}(x)} $$
and similarly defining $\psi_Q$ and $\xi_i$ by dividing by the sum as above. As a consequence of (\ref{bounded_overlap}) we have
\begin{enumerate}
 \item[(\textit{i})]$\underset{Q\in\mathcal{U}^{(m)}}\sum\psi_{Q}(x)+\sum_{i=0}^{l_m}\xi_i(x)+\underset{\mathcal{Q}_i\in\mathcal{Q}^{(m)}}\sum\varphi_{i}(x)=1$, for all $x\in\Omega$.
 \item[(\textit{ii})]$0\leq \psi_Q,\varphi_i,\xi_i\leq 1$.
 \item[(\textit{iii})]$\max\{ \|\nabla^{\alpha}\psi_Q\|_{L^{\infty}(\Omega)}, \|\nabla^{\alpha}\varphi_i\|_{L^{\infty}(\Omega)}, \|\nabla^{\alpha}\xi_i\|_{L^{\infty}(\Omega)}\} \leq C(\alpha)2^{m|\alpha|},$ whenever $0\leq |\alpha| \leq k$.
 \item[(\textit{iv})] $\text{spt}(\psi_Q)\subset \frac{11}{10}c_0Q$ for $Q\in\mathcal{U}^{(m)}$,\newline $\text{spt}(\varphi_i)\subset \underset{V_j\in\mathcal{B}_i}\bigcup B(V_j,2^{-m}/100)\;\cup\; \underset{Q\in\mathcal{Q}_i}\bigcup (\frac{11}{10}c_0Q)$ for $\mathcal{Q}_i\in\mathcal{Q}^{(m)}$ and \newline $\text{spt}(\xi_i)\subset B_{U_i}$ for $i=1,\ldots,l_m$.
\end{enumerate}

\begin{proof}[Proof of Theorem \ref{main_1}]
By the density of smooth functions in $L^{k,p}(\Omega)$ (see Maz'ya's book \cite{Mazya} for example), we may assume without loss of generality that $u\in C^{\infty}(\Omega)$. Recall the definition of approximating polynomials from Section \ref{approx_poly}. For any Whitney cube $Q$ we write $P_Q$ for the approximating polynomial of $u$ in $Q$. For $\mathcal{Q}_i\in\mathcal{Q}^{(m)}$, recall $Q_i\in\mathcal{Q}_i$ as the assigned cube for $\mathcal{Q}_i$ (appearing also in Lemma \ref{mainlem_3}). We write $P_i$ for the polynomial $P_{Q_i}$. We define an approximating function by setting
\begin{equation*}
 u_m(x):=\sum_{Q\in\mathcal{U}^{(m)}}\psi_Q (x)P_Q(x)+\sum_{\mathcal{Q}_i\in\mathcal{Q}^{(m)}}\varphi_{i}(x)P_i(x)+\sum_{i=0}^{l_m}\xi_i(x)u(x),
\end{equation*}for $m\in\mathbb{N}$ and for all $x\in\Omega$. We note that $u_m\in W^{k,\infty}(\Omega)\cap C^{\infty}(\Omega)$. 
By \eqref{bounded_overlap} and property (\textit{iv}) of the partition of unity it suffices to show that $m\in\mathbb{N}$ can be chosen large enough so that 
$$
\|u_m\|_{L^{k,p}(\underset{Q\in\mathcal{U}^{(m)}}\bigcup B_Q\;\cup\;\underset{\mathcal{Q}_i\in\mathcal{Q}^{(m)}}\bigcup B_i)}\leq \underset{Q\in\mathcal{U}^{(m)}}\sum\|u_m\|_{L^{k,p}(B_Q)}+\underset{\mathcal{Q}_i\in\mathcal{Q}^{(m)}}\sum\|u_m\|_{L^{k,p}(B_i)}\lesssim\epsilon.
$$

Fix a multi-index $\alpha$ such that $|\alpha|=k$. We note that $$\nabla^{\alpha-\beta}(\sum_{Q\in\mathcal{U}^{(m)}}\psi_Q+\sum_{\mathcal{Q}_i\in\mathcal{Q}^{(m)}}\varphi_i+\sum_{i=0}^{l_m}\xi_i)=0\; \text{for}\; \beta<\alpha\;\;\text{and}\;\;\nabla^{\alpha}P_Q=0,$$
where $\beta\leq\alpha$ means $0\leq\beta_i\leq \alpha_i$ for all $0\leq i\leq n$ where $\alpha=(\alpha_i)_i,\beta=(\beta_i)_i$. Let  $Q\in\mathcal{U}^{(m)}$. Then we have 
\begin{equation}\label{analyst's trick}
\begin{split}
 \nabla^{\alpha}u_m(x)
 &=  
 \sum_{\beta\leq\alpha}(\;\sum_{Q'\in\mathcal{U}^{(m)}}\nabla^{\beta}P_{Q'}(x)\nabla^{\alpha-\beta}\psi_{Q'}(x)
 \\&+
 \sum_{\mathcal{Q}_i\in\mathcal{Q}^{(m)}}\nabla^{\beta}P_i(x)\nabla^{\alpha-\beta}\varphi_{i}(x)+\sum_{i=0}^{l_m}\nabla^{\beta}u(x)\nabla^{\alpha-\beta}\xi_i(x)\;)
 \\& =
 \sum_{\beta< \alpha}(\sum_{Q'\in\mathcal{U}^{(m)}}\nabla^{\beta}(P_{Q'}(x)-P_Q(x))\nabla^{\alpha-\beta}\psi_{Q'}(x)
 \\&
 +\sum_{\mathcal{Q}_i\in\mathcal{Q}^{(m)}}\nabla^{\beta}(P_{i}(x)-P_Q(x))\nabla^{\alpha-\beta}\varphi_{i}(x)+\sum_{i=0}^{l_m}\nabla^{\beta}(u(x)-P_Q(x))\nabla^{\alpha-\beta}\xi_i(x)\;)
 \\&
 +\nabla^{\alpha}u(x)\sum_{i=0}^{l_m}\xi_i(x).
\end{split}
\end{equation}

%Clearly, the above holds also with $P_Q$ for $Q\in\mathcal{U}_m$ replaced by $P_i$ for some $\mathcal{Q}_i\in\mathcal{Q}^{(m)}$.
Using \eqref{analyst's trick} and property (\textit{iii}) of the partition of unity, for $Q\in \mathcal{U}^{(m)}$ we get
\begin{equation}\label{est_0}
 \begin{split}
  \|\nabla^{\alpha}u_m\|_{L^p(B_Q)} &\leq \sum_{\beta<\alpha}2^{m(k-|\beta|)}\sum_{i=0}^{l_m}\|\nabla^{\beta}(u-P_Q)\|_{L^p(B_Q\cap B_{U_i})}\\&+  \sum_{\beta<\alpha}2^{m(k-|\beta|)} \sum_{{\substack{Q'\in\mathcal{U}^{(m)}\\B_Q\cap B_{Q'}\neq\emptyset}}}\|\nabla^{\beta}(P_{Q'}-P_Q)\|_{L^p(B_Q\cap B_{Q'})}  \\& +\sum_{\beta<\alpha}2^{m(k-|\beta|)}\sum_{{\substack{\mathcal{Q}_i\in\mathcal{Q}^{(m)}\\B_i\cap B_{Q'}\neq\emptyset}}}\|\nabla^{\beta}(P_{i}-P_Q)\|_{L^p(B_Q\cap B_{i})}  +\|(\sum_{i=0}^{l_m}\xi_i)\nabla^{\alpha}u\|_{L^p(B_Q)}\\&=:A_1+A_2+A_3+A_4,
 \end{split}
%\begin{split}
  %\|D^{\alpha}u_m\|_{L^p(B_Q)}^p &\leq \left(\bigintss_{B_Q}\left|D^{\alpha}(\,\psi(x)v(x)\,+\,\sum_{Q'\in\mathcal{P}_m} \varphi_{Q'}(x)P_{Q'}(x)\,)\right|^p\right)^{1/p}\\&\leq 
  %\sum_{\beta\leq\alpha}\left(\bigintss_{B_Q}\left|D^{\alpha-\beta}\psi(x)\,D^{\beta}v(x)\,+\,\sum_{Q'\in\mathcal{P}_m} D^{\alpha-\beta}\varphi_{Q'}(x)\,D^{\beta}P_{Q'}(x)\,)\right|^p\right)^{1/p}\\&\leq 
  %\sum_{\beta<\alpha}\left(\bigintss_{B_Q}\left|D^{\alpha-\beta}\psi(x)\,(\,D^{\beta}v(x)\,-\,D^{\beta}P_{Q}(x)\,)\right|^p\right)^{1/p}\&+
  %\sum_{\beta<\alpha}\sum_{Q'\in\mathcal{P}_m}\left(\bigintss_{B_Q}\left|D^{\alpha-\beta}\varphi_{Q'}(x)\,(\,D^{\beta}P_{Q'}(x)\,-\,D^{\beta}P_{Q}(x)\,)\right|^p\right)^{1/p}\\&+
  %\left(\int_{B_Q}|\psi(x)\,D^{\alpha}v(x)|^p\right)^{1/p}
 %\end{split}
\end{equation}
where we write $A_i$ for the summands in their order of appearance.

We estimate them separately. First of all,
\begin{equation}\label{est_1}\begin{split}A_1&\lesssim \sum_{\beta<\alpha}2^{m(k-|\beta|)}\sum_{\substack{Q'\in\mathcal{A}(Q)\\Q'\cap B_Q\neq\emptyset}}\|\,\nabla^{\beta}u\,-\,\nabla^{\beta}P_{Q}\|_{L^p(Q')}\\&\lesssim \sum_{\beta<\alpha}2^{m(k-|\beta|)}\sum_{\substack{Q'\in\mathcal{A}(Q)\\Q'\cap B_Q\neq\emptyset}}\|\,\nabla^{\beta}u\,-\,\nabla^{\beta}P_{Q'}\|_{L^p(Q')}\\&+\sum_{\beta<\alpha}2^{m(k-|\beta|)}\sum_{\substack{Q'\in\mathcal{A}(Q)\\Q'\cap B_Q\neq\emptyset}}\|\,\nabla^{\beta}P_{Q'}\,-\,\nabla^{\beta}P_{Q}\|_{L^p(Q')}\\&\lesssim\sum_{\substack{Q'\in\mathcal{A}(Q)\\Q'\cap B_Q\neq\emptyset}}\sum_{Q''\in\mathcal{F}(Q,Q')}\|\nabla^k u\|_{L^p(Q'')},\end{split}\end{equation}
where in the first inequality we observed that $\{Q'\in\mathcal{A}(Q)\mid Q'\cap B_Q\neq\emptyset\}$ covers $B_Q\cap \bigcup_{i=0}^{l_m}\text{spt}(\xi_i)$. The second inequality is triangle inequality whereas in the last inequality we use the Poincar\'e inequality and Lemma \ref{chaining} respectively.

%We need the following fact in the next estimate.
%\begin{lem}\label{containment}Given $Q,Q'\in\mathcal{U}_m\cup\mathcal{U}_m'$ such that $B_Q\cap B_Q'\neq\emptyset$,  we have $B_Q\cap B_{Q'}\subset C(c_0)Q$. \end{lem}\begin{proof}This is clear when $Q,Q'\in\mathcal{U}_m'$. Let us assume then that $Q=Q_i\in\mathcal{V}_i\subset\mathcal{U}_m$. For $Q'\in\mathcal{U}_m'$,  by definition of $\mathcal{U}_m'$, we have that $(c_0Q')_c\not\subset V_i$, thus $(c_0Q_i)_c\cap B_{Q'}\neq \emptyset$. We conclude $B_{Q'}\cap B_{Q_i}\subset C(c_0)Q_i$. If $Q'=Q_j\in\mathcal{V}_j\subset\mathcal{U}_m$, for some $j\neq i$, we note that $V_i\cap V_j=\emptyset$, by the first property of the decomposition of $\Omega$, enlisted above. By similar reasoning as in the previous case therefore $\frac{11}{10}c_0Q_i\cap \frac{11}{10}c_0Q_j\neq \emptyset$ and $B_{Q_i}\cap B_{Q_j}\subset \frac{11}{10}c_0Q_i\cup \frac{11}{10}c_0Q_j$ and the lemma follows.\end{proof}

We estimate
\begin{equation}\label{est_2}
 \begin{split}
  A_2&=\sum_{\beta<\alpha}2^{m(k-\beta|)}\sum_{\substack{Q'\in\mathcal{U}^{(m)}\\B_Q\cap B_{Q'}\neq\emptyset}}\|\nabla^{\beta}P_{Q'}\,-\,\nabla^{\beta}P_{Q}\|_{L^p(B_Q\cap B_{Q'})}\\&\lesssim\sum_{\beta<\alpha}2^{m(k-|\beta|)}\sum_{\substack{Q'\in\mathcal{U}^{(m)}\\B_Q\cap B_{Q'}\neq\emptyset}}\|\,\nabla^{\beta}P_{Q'}\,-\,\nabla^{\beta}P_{Q}\|_{L^p(Q)}\\&\lesssim\sum_{\substack{Q'\in\mathcal{U}^{(m)}\\B_Q\cap B_{Q'}\neq\emptyset}}\sum_{Q''\in\mathcal{F}(Q,Q')}\|\nabla^k u\|_{L^p(Q'')},
 \end{split}
\end{equation}
where in the first inequality we applied Lemma \ref{norm_equivalence} and for the second inequality we used Lemma \ref{chaining}. Similarly,

\begin{equation}\label{est_3}
 \begin{split}
  A_3&\lesssim\sum_{\beta<\alpha}2^{m(k-\beta|)}\sum_{\substack{\mathcal{Q}_i\in\mathcal{Q}^{(m)}\\B_Q\cap B_{i}\neq\emptyset}}\|\nabla^{\beta}P_{i}\,-\,\nabla^{\beta}P_{Q}\|_{L^p(B_Q\cap B_{i})}\\&\lesssim\sum_{\beta<\alpha}2^{m(k-|\beta|)}\sum_{\substack{\mathcal{Q}_i\in\mathcal{Q}^{(m)}\\B_Q\cap B_{i}\neq\emptyset}}\|\,\nabla^{\beta}P_{i}\,-\,\nabla^{\beta}P_{Q}\|_{L^p(Q)}\\&\lesssim\sum_{\substack{\mathcal{Q}_i\in\mathcal{Q}^{(m)}\\B_Q\cap B_{i}\neq\emptyset}}\sum_{Q'\in\mathcal{F}(Q,Q_i)}\|\nabla^k u\|_{L^p(Q')}.
 \end{split}
\end{equation}
Finally,
\begin{equation}\label{est_4}
 \begin{split}
  A_4&\leq \sum_{\substack{Q'\in\mathcal{A}(Q)\\Q'\cap B_Q\neq\emptyset}}\|\nabla^k u\|_{L^p(Q')}.
 \end{split}
\end{equation}
Similarly we estimate $\|\nabla^{\alpha}u_m\|_{L^p(B_i)}$ for $\mathcal{Q}_i\in\mathcal{Q}^{(m)}$ as follows; 
\begin{equation}\label{est_0'}
 \begin{split}
  \|\nabla^{\alpha}u_m\|_{L^p(B_i)} &\leq \sum_{\beta<\alpha}2^{m(k-|\beta|)}\sum_{j=0}^{l_m}\|\nabla^{\beta}(u-P_i)\|_{L^p(B_i\cap B_{U_j})}\\&+  \sum_{\beta<\alpha}2^{m(k-|\beta|)} \sum_{{\substack{Q\in\mathcal{U}^{(m)}\\B_i\cap B_{Q}\neq\emptyset}}}\|\nabla^{\beta}(P_{Q}-P_i)\|_{L^p(B_i\cap B_{Q})}  \\& +\sum_{\beta<\alpha}2^{m(k-|\beta|)}\sum_{{\substack{Q_j\in\mathcal{Q}^{(m)}\\B_i\cap B_{j}\neq\emptyset}}}\|\nabla^{\beta}(P_{j}-P_i)\|_{L^p(B_j\cap B_{i})}  +\|(\sum_{j=0}^{l_m}\xi_j)\nabla^{\alpha}u\|_{L^p(B_i)}\\&=:A_1'+A_2'+A_3'+A_4'. 
 \end{split}
\end{equation}
Now,
\begin{equation}\label{est_1'}
\begin{split}A_1'&\lesssim \sum_{\beta<\alpha}2^{m(k-|\beta|)}\sum_{Q\in \mathcal{Q}_i}\sum_{\substack{Q'\in\mathcal{A}(Q)\\Q'\cap B_Q\neq\emptyset}}\|\,\nabla^{\beta}u\,-\,\nabla^{\beta}P_{i}\|_{L^p(Q')}\\&\lesssim \sum_{\beta<\alpha}2^{m(k-|\beta|)}\sum_{Q\in \mathcal{Q}_i}\sum_{\substack{Q'\in\mathcal{A}(Q)\\Q'\cap B_Q\neq\emptyset}}\|\,\nabla^{\beta}u\,-\,\nabla^{\beta}P_{Q'}\|_{L^p(Q')}\\&+\sum_{\beta<\alpha}2^{m(k-|\beta|)}\sum_{Q\in \mathcal{Q}_i}\sum_{\substack{Q'\in\mathcal{A}(Q)\\Q'\cap B_Q\neq\emptyset}}\|\,\nabla^{\beta}P_{Q'}\,-\,\nabla^{\beta}P_{i}\|_{L^p(Q')}\\&\lesssim\sum_{Q\in \mathcal{Q}_i}\sum_{\substack{Q'\in\mathcal{A}(Q)\\Q'\cap B_Q\neq\emptyset}}\sum_{Q''\in\mathcal{F}(Q_i,Q')}\|\nabla^k u\|_{L^p(Q'')}.\end{split}\end{equation}
Secondly,
\begin{equation}\label{est_2'}
 \begin{split}
  A_2'&\lesssim\sum_{\beta<\alpha}2^{m(k-|\beta|)}\sum_{\substack{Q\in\mathcal{U}^{(m)}\\B_Q\cap B_{Q'}\neq\emptyset}}\|\nabla^{\beta}P_{Q}\,-\,\nabla^{\beta}P_{i}\|_{L^p(B_Q\cap B_i)}\\&\lesssim\sum_{\beta<\alpha}2^{m(k-|\beta|)}\sum_{\substack{Q\in\mathcal{U}^{(m)}\\B_Q\cap B_{i}\neq\emptyset}}\|\,\nabla^{\beta}P_{Q}\,-\,\nabla^{\beta}P_{i}\|_{L^p(Q_i)}\\&\lesssim\sum_{\substack{Q\in\mathcal{U}^{(m)}\\B_Q\cap B_{i}\neq\emptyset}}\sum_{Q'\in\mathcal{F}(Q,Q_i)}\|\nabla^k u\|_{L^p(Q')}.
 \end{split}
\end{equation}
Thirdly,
\begin{equation}\label{est_3'}
 \begin{split}
  A_3'&\lesssim\sum_{\beta<\alpha}2^{m(k-|\beta|)}\sum_{\substack{Q_j\in\mathcal{Q}^{(m)}\\B_j\cap B_{i}\neq\emptyset}}\|\nabla^{\beta}P_{j}\,-\,\nabla^{\beta}P_{i}\|_{L^p(B_j\cap B_{i})}\\&\lesssim\sum_{\beta<\alpha}2^{m(k-|\beta|)}\sum_{\substack{Q_j\in\mathcal{Q}^{(m)}\\B_j\cap B_{i}\neq\emptyset}}\|\,\nabla^{\beta}P_{j}\,-\,\nabla^{\beta}P_{i}\|_{L^p(Q_i)}\\&\lesssim\sum_{\substack{Q_j\in\mathcal{Q}^{(m)}\\B_j\cap B_{i}\neq\emptyset}}\sum_{Q\in\mathcal{F}(Q_j,Q_i)}\|\nabla^k u\|_{L^p(Q)}
 \end{split}
\end{equation}
and finally,
\begin{equation}\label{est_4'}
 \begin{split}
  A_4'&\leq \sum_{Q\in\mathcal{Q}_i}\sum_{\substack{Q'\in\mathcal{A}(Q)\\Q'\cap B_Q\neq\emptyset}}\|\nabla^k u\|_{L^p(Q')}.
 \end{split}
\end{equation}

Combining \eqref{est_1}, \eqref{est_2}, \eqref{est_3}, \eqref{est_4}, \eqref{est_1'}, \eqref{est_2'}, \eqref{est_3'} and \eqref{est_4'} we get
\begin{equation}\label{est_5}
\begin{split}
 \|u_m\|_{L^{k,p}(\underset{Q\in\mathcal{U}^{(m)}}\bigcup B_Q\;\cup\;\underset{\mathcal{Q}_i\in\mathcal{Q}^{(m)}}\bigcup B_i)} &\lesssim \sum_{Q\in\mathcal{P}_m}\sum_{\substack{Q'\in\mathcal{A}(Q)\\Q'\cap B_Q\neq\emptyset}}\sum_{Q''\in\mathcal{F}(Q,Q')}\|\nabla^k u\|_{L^p(Q'')}\,\\&+\, \sum_{Q\in\mathcal{U}^{(m)}}\sum_{\substack{Q'\in\mathcal{U}^{(m)}\\B_Q\cap B_{Q'}\neq\emptyset}}\sum_{Q''\in\mathcal{F}(Q,Q')}\|\nabla^k u\|_{L^p(Q'')}\\&+\sum_{\mathcal{Q}_i\in\mathcal{Q}^{(m)}}\sum_{Q\in \mathcal{Q}_i}\sum_{\substack{Q'\in\mathcal{A}(Q)\\Q'\cap B_Q\neq\emptyset}}\sum_{Q''\in\mathcal{F}(Q_i,Q')}\|\nabla^k u\|_{L^p(Q'')}\\&+\sum_{\mathcal{Q}_i\in\mathcal{Q}^{(m)}}\sum_{\substack{Q\in\mathcal{U}^{(m)}\\B_Q\cap B_{i}\neq\emptyset}}\sum_{Q'\in\mathcal{F}(Q,Q_i)}\|\nabla^k u\|_{L^p(Q')}\\& + \sum_{\mathcal{Q}_i\in\mathcal{Q}^{(m)}}\sum_{\substack{Q_j\in\mathcal{Q}^{(m)}\\B_j\cap B_{i}\neq\emptyset}}\sum_{Q\in\mathcal{F}(Q_j,Q_i)}\|\nabla^k u\|_{L^p(Q)} .
 \end{split}
\end{equation}

Therefore, for $|\alpha|=k$
\begin{equation}\label{est_6}
 \|u_m\|_{L^{k,p}(\underset{Q\in\mathcal{U}^{(m)}}\bigcup B_Q\;\cup\;\underset{\mathcal{Q}_i\in\mathcal{Q}^{(m)}}\bigcup B_i)}\lesssim \|\nabla^k u\|_{L^p(\Omega\backslash\Omega_{\alpha m}^{(1)})},
\end{equation}where $0<\alpha=\alpha(c_0,c,n)<1$; the inequality in \eqref{est_6} follows from interchanging the order of summation in \eqref{est_5} using Lemma \ref{overlap_2}.
Hence by choosing $m$ large enough, we get the required error bound.
\end{proof}

\section{Gromov Hyperbolic and related domains}\label{Hyperbolic}
In this section we show that the hypotheses of Theorem \ref{main_1} are satisfied by Gromov hyperbolic domains in $\R^n$. For this we use the uniformization of Gromov hyperbolic domains by a conformal deformation of the quasihyperbolic metric. This idea was developed in \cite{BHK} (see also \cite{KLM}), where it was proved among other things that the domain equipped with the deformed metric is a uniform space. It was also shown that the resulting metric space is Loewner (see Definition \ref{loewner} below) when equipped with a suitable measure, compatible with the metric deformation. We use this information to prove a diameter version of the length Gehring-Hayman Theorem (Theorem \ref{diameter_GeHa}) which, in turn implies the hypothesis of Theorem \ref{main_1}. In the following we discuss the main concepts needed for our purpose.

Given a domain $\Omega\subset\R^n$, consider the metric space $(\Omega,k)$ and conformal deformations of $\Omega$ by densities, denoted $\rho_{\epsilon}$, for $\epsilon>0$ and defined as (see page 28 in \cite{BHK})
$$\rho_{\epsilon}(x):=\exp \{-\epsilon k(x,x_0)\},$$ for all $x\in\Omega$,
where $x_0\in\Omega$ is a fixed basepoint. The metric $\di_{\epsilon}$ induced by $\rho_{\epsilon}$ is defined by $$\di_{\epsilon}(x,y):=\inf \int_{\gamma}\rho_{\epsilon}ds_k,$$ where the infimum is taken over all curves joining $x$ and $y$ in the domain $\Omega$. Here the measure $ds_k$ is induced by the quasihyperbolic metric $k$, that is we have 
$$\int_{\gamma}\rho_{\epsilon}ds_k=\int_{\gamma}\frac{\rho_{\epsilon}(\gamma(t))}{d_{\Omega}(\gamma(t))}dt$$ where $\gamma$ is parametrized by arc-length in the domain $\Omega$ in the usual sense.

\begin{thm}[Proposition 4.5, \cite{BHK}]\label{uniformize}
 The conformal deformations $(\Omega,\di_{\epsilon})$ of a $\delta$-hyperbolic domain $\Omega\subset\R^n$ are bounded $A(\delta)$-uniform spaces for $0<\epsilon<\epsilon_0(\delta)$.
\end{thm}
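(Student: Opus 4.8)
The plan is to prove Theorem~\ref{uniformize} by showing that the conformal deformation $(\Omega,\di_\epsilon)$ is bounded, that $\di_\epsilon$-geodesics (or quasi-geodesics obtained from quasihyperbolic geodesics) have controlled $\rho_\epsilon$-length, and that they satisfy the two uniformity conditions of Definition~\ref{uniformdomains}, all with constants depending only on $\delta$ once $\epsilon$ is taken below a threshold $\epsilon_0(\delta)$. The natural parametrization is by quasihyperbolic arclength: for a quasihyperbolic geodesic $\gamma$ from $x_0$ parametrized by $k$-arclength, one has $k(x_0,\gamma(t))=t$, so $\rho_\epsilon(\gamma(t))=e^{-\epsilon t}$ and hence $\ell_{\rho_\epsilon}(\gamma|_{[s,t]})=\int_s^t e^{-\epsilon\tau}\,d\tau = \frac{1}{\epsilon}(e^{-\epsilon s}-e^{-\epsilon t})$. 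This identity, together with the triangle-type inequality $|k(x_0,x)-k(x_0,y)|\le k(x,y)$ and the $\delta$-thinness of geodesic triangles, is the engine of the whole proof.

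First I would record the basic length estimates. For any two points $x,y$, taking $\gamma$ a quasihyperbolic geodesic between them, $\rho_\epsilon$ along $\gamma$ is comparable (using $|k(x_0,\gamma(t))-k(x_0,x)|\le k(x,\gamma(t))\le k(x,y)$, or better, using that a subsegment of a geodesic from $x_0$ is essentially monotone in $k(x_0,\cdot)$ up to an additive $O(\delta)$ error via thinness) to $e^{-\epsilon k(x_0,x)}\wedge e^{-\epsilon k(x_0,y)}$ times a factor $e^{O(\epsilon\delta)}$; summing the geometric-type bound above gives $\di_\epsilon(x,y)\lesssim \frac{1}{\epsilon}(e^{-\epsilon k(x_0,x)}+e^{-\epsilon k(x_0,y)})$, which in particular bounds $\di_\epsilon(x,x_0)\le \frac1\epsilon$ and shows $(\Omega,\di_\epsilon)$ is bounded. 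For the reverse inequality one shows $\di_\epsilon(x,y)\gtrsim e^{-\epsilon k(x_0,x\wedge y)}$ up to constants, i.e. that no competitor curve can do much better than the quasihyperbolic geodesic; here one uses that any curve from $x$ to $y$ must, by the thin-triangle property, pass within $O(\delta)$ (in $k$) of the geodesic from $x_0$ to the ``center'' of the triple $(x_0,x,y)$, whose distance to $x_0$ is $(x|y)_{x_0}$, the Gromov product. So $\di_\epsilon(x,y)\simeq e^{-\epsilon(x|y)_{x_0}}$ up to multiplicative constants depending on $\delta$ and $\epsilon$, which is the standard ``visual metric'' comparison.

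Next, I would verify the two uniformity conditions for the candidate curve $\gamma=\Gamma_{xy}$, a quasihyperbolic geodesic, reparametrized appropriately. Condition~(i), $\ell_{\rho_\epsilon}(\gamma)\le A\,\di_\epsilon(x,y)$, follows by combining the upper length bound $\ell_{\rho_\epsilon}(\Gamma_{xy})\lesssim \frac1\epsilon e^{-\epsilon(x|y)_{x_0}}$ (split $\Gamma_{xy}$ at its point nearest $x_0$, which by thinness is within $O(\delta)$ in $k$ of distance $(x|y)_{x_0}$ from $x_0$, and sum the two geometric series) with the lower bound $\di_\epsilon(x,y)\gtrsim e^{-\epsilon(x|y)_{x_0}}$ from the previous paragraph. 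Condition~(ii), the double-cone/John condition, is the geometrically substantive one: for $z\in\Gamma_{xy}$ one must show $\ell_{\rho_\epsilon}(\Gamma_{xy}(x,z))\wedge \ell_{\rho_\epsilon}(\Gamma_{xy}(z,y))\lesssim \di_\epsilon(z,\part_\epsilon)$, the $\di_\epsilon$-distance from $z$ to the ``boundary at infinity''. The $\rho_\epsilon$-distance from $z$ to the completion's boundary is comparable to $\frac1\epsilon\rho_\epsilon(z)=\frac1\epsilon e^{-\epsilon k(x_0,z)}$ (follow the geodesic from $z$ heading away from $x_0$; its $\rho_\epsilon$-length is exactly a geometric series summing to $\frac1\epsilon e^{-\epsilon k(x_0,z)}$, and one checks nothing shorter reaches the boundary). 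On the other hand, the shorter of the two subarcs from $z$ has $\rho_\epsilon$-length $\lesssim \frac1\epsilon e^{-\epsilon k(x_0,z)}$ precisely because along that subarc $k(x_0,\cdot)\ge k(x_0,z)-O(\delta)$: the function $t\mapsto k(x_0,\Gamma_{xy}(t))$ on a quasihyperbolic geodesic is quasi-convex with constant $O(\delta)$ (again thin triangles), so on the half of the geodesic not containing the near-$x_0$ vertex it is essentially increasing away from its value at $z$. Putting these together gives condition~(ii) with $A=A(\delta)$.

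The main obstacle I expect is making the thin-triangle input quantitative and uniform in $\epsilon$: the errors produced by $\delta$-thinness enter the exponent as $e^{\pm\epsilon O(\delta)}$, so they are harmless multiplicative constants \emph{provided $\epsilon\delta$ stays bounded}, which forces the threshold $\epsilon_0(\delta)\simeq 1/\delta$ (or smaller), and one must track that every constant genuinely depends only on $\delta$ (equivalently on $\epsilon_0\delta$) and not on the domain. A secondary technical point is justifying that $\di_\epsilon$ is a genuine metric and that $(\Omega,\di_\epsilon)$ has nonempty boundary in its completion with the expected visual-boundary structure, so that ``distance to the boundary'' in condition~(ii) is meaningful; this is where one invokes that $\Omega\subsetneq\R^n$ so $k$ is a genuine metric with $d_\Omega<\infty$, and that the $\rho_\epsilon$-length of geodesic rays to infinity is finite (the geometric series $\sum e^{-\epsilon n}$ converges), so the completion adds exactly a boundary ``at infinity.'' I would lean on the corresponding statements and estimates in \cite{BHK} for these completion and measure-compatibility facts, citing them rather than reproving, and concentrate the argument on the two length/cone inequalities above.
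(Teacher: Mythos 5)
The paper does not contain its own proof of Theorem~\ref{uniformize}: it is stated with attribution to Proposition~4.5 of Bonk--Heinonen--Koskela \cite{BHK} and used as a black box, so there is no in-paper argument to compare against. Your sketch nonetheless has the right overall shape for the BHK proof: parametrize by quasihyperbolic arclength so that $\rho_\epsilon$ along a geodesic emanating from $x_0$ is a pure exponential, integrate to get the geometric-series length bounds, compare $\di_\epsilon(x,y)$ to the visual quantity $\tfrac1\epsilon e^{-\epsilon(x|y)_{x_0}}$, and verify the two uniformity conditions by splitting geodesics at their closest approach to $x_0$ and using $\delta$-thinness to control exponents up to harmless $e^{O(\epsilon\delta)}$ factors, which is exactly what forces the threshold $\epsilon_0(\delta)$.

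The step you pass over too quickly is the lower bound $\di_\epsilon(x,y)\gtrsim \tfrac1\epsilon e^{-\epsilon(x|y)_{x_0}}$. You justify it by ``any curve from $x$ to $y$ must, by the thin-triangle property, pass within $O(\delta)$ of the geodesic from $x_0$,'' but thinness is a statement about geodesic triangles, whereas the infimum defining $\di_\epsilon$ runs over \emph{all} rectifiable competitors, which can avoid that geodesic entirely by making a large detour. What actually rescues the bound is the exponential divergence of geodesics in a Gromov hyperbolic space: a curve from $x$ to $y$ that avoids a quasihyperbolic ball $B_k(x_0,r)$ for $r>(x|y)_{x_0}+C\delta$ must have $k$-length growing exponentially in $r$, so its $\rho_\epsilon$-length stays bounded below even though $\rho_\epsilon$ itself is small on the detour, \emph{provided} $\epsilon$ is below a $\delta$-dependent threshold (otherwise the exponential decay of $\rho_\epsilon$ beats the exponential growth of length). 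This divergence input is the real content of the BHK lemma your sketch implicitly invokes, and it is also the source of the quantitative constraint on $\epsilon_0(\delta)$; without naming it the argument is not complete. A smaller version of the same issue appears in your distance-to-boundary estimate $\di_\rho(z)\simeq\tfrac1\epsilon\rho_\epsilon(z)$: the lower bound there is the elementary triangle-inequality integral, as you note, but the upper bound needs the existence of a geodesic ray from $z$ along which $k(x_0,\cdot)$ increases linearly up to $O(\delta)$ error, which is a thinness/stability fact rather than a computation.
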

%An $A$-uniform domain is a $(A,A)$-uniform domain (see Section \ref{geometric_definitions}). 
For our purpose we fix $\epsilon=\epsilon_0(\delta)/2$ and take $x_0$ as defined in Section \ref{homogeneous_density} to be the fixed base point for the conformal deformation metric $\rho_{\epsilon}$ and denote $\rho_{\epsilon}$ by $\rho$ below. We also denote the metric $\di_{\epsilon}$ as $\di_{\rho}$ for our chice of $\epsilon$.

Theorem \ref{uniformize} says that for any domain $\Omega$ which is Gromov hyperbolic with the quasihyperbolic metric (induced by the density $1/\di_{\Omega}(\cdot)$), one can find a metric $\di_{\epsilon}$ induced by multiplying the density $1/\di_{\Omega}(\cdot)$ with a suitable weight, with which the domain $\Omega$ is a bounded and uniform metric space. Hence $\Omega$ equipped with the deformed metric $\di_{\rho}$ is also quasihyperbolic with the metric $k_{\rho}$, induced by the density $1/\di_{\rho}(\cdot)$, where $$\di_{\rho}(x)=\inf \di_{\rho}(x,y),$$ and the infimum is taken over all points in the topological boundary, denoted $\partial_{\rho}\Omega$, of the metric space $(\Omega,\di_{\rho})$; see for example Theorem 3.6 in \cite{BHK}. The boundary $\partial_{\rho}\Omega$ with the metric $\di_{\rho}$ extended to the boundary is shown to be quasisymmetrically equivalent to the Gromov boundary (equipped with its quasisymmetric gauge) of $(\Omega,k)$ in Proposition 4.13 of \cite{BHK}. The boundary is thus stretched out by the deformation to make the interior uniform.
We state next as a lemma, a fact which follows from Proposition 4.37 and Lemma 7.8 in \cite{BHK}.
\begin{lem}\label{bilipschitz}
 The metric spaces $(\Omega,k_{\rho})$ and $(\Omega,k)$ are $C(\delta)$-bilipschitz equivalent.
\end{lem}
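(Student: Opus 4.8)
The plan is to reduce the statement to a pointwise comparison of conformal weights on $\Omega$. Unwinding the definitions, $k_{\rho}$ is the quasihyperbolic metric of the metric space $(\Omega,\di_{\rho})$, and since $\rho$ and $\di_{\Omega}$ are continuous and strictly positive on $\Omega$, the metric $\di_{\rho}$ is locally bilipschitz to the Euclidean one; hence $\di_{\rho}$-rectifiability, $k$-rectifiability and Euclidean rectifiability of curves coincide, and for an absolutely continuous curve $\gamma$ in $\Omega$, parametrised by Euclidean arclength, its length $l_{k_{\rho}}(\gamma)$ in $k_{\rho}$ and its length $l_{k}(\gamma)$ in $k$ are
\[
l_{k_{\rho}}(\gamma)=\int_{\gamma}\frac{\rho(\gamma(t))}{\di_{\Omega}(\gamma(t))\,\di_{\rho}(\gamma(t))}\,dt,
\qquad
l_{k}(\gamma)=\int_{\gamma}\frac{dt}{\di_{\Omega}(\gamma(t))},
\]
using that the $\di_{\rho}$-length element is $\rho\,ds_{k}$ and the $k$-length element is $ds_{k}=\di_{\Omega}^{-1}|dx|$. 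Thus $k$ and $k_{\rho}$ are both length metrics on $\Omega$ generated by continuous positive weights, and if two such weights agree up to a multiplicative factor $C$ then the corresponding length metrics are $C$-bilipschitz equivalent (compare lengths of every curve, then take infima). Consequently it is enough to establish
\[
\di_{\rho}(x)\asymp\rho(x),\qquad x\in\Omega,
\]
with constants depending only on $\delta$ (recall that $\epsilon=\epsilon_{0}(\delta)/2$ has been fixed); this is the content of Proposition 4.37 of \cite{BHK}, and Lemma 7.8 there records the ensuing equivalence $k_{\rho}\asymp k$. Granting $\di_{\rho}\asymp\rho$, the two weights displayed above differ by a factor $C(\delta)$, hence so do the lengths of every curve, and therefore $C(\delta)^{-1}k\le k_{\rho}\le C(\delta)\,k$.

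To see why $\di_{\rho}(x)\asymp\rho(x)$, one argues in two halves. The lower bound uses no hyperbolicity: $(\Omega,k)$ is complete, locally compact and geodesic, hence proper by Hopf--Rinow, so $\overline{B_{k}(x,1)}$ is a compact subset of $\Omega$; therefore every curve $\sigma$ issuing from $x$ and tending to $\partial_{\rho}\Omega$ must leave $B_{k}(x,1)$, and on its initial subcurve up to the first exit one has $k(\sigma(s),x_{0})\le k(x,x_{0})+1$, so $\rho(\sigma(s))\ge e^{-\epsilon}\rho(x)$ while this subcurve has $k$-length at least $1$; hence its $\di_{\rho}$-length, and thus $\di_{\rho}(x)$, is at least $e^{-\epsilon}\rho(x)$. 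For the upper bound one uses Gromov hyperbolicity: extending the quasihyperbolic geodesic $[x_{0},x]$ past $x$ to a geodesic ray $[x_{0},\xi)$ and letting $\gamma$ be its subray from $x$, one has $k(\gamma(t),x_{0})=k(x,x_{0})+t$, so $\rho(\gamma(t))=\rho(x)e^{-\epsilon t}$ and the $\di_{\rho}$-length of $\gamma$ is at most $\int_{0}^{\infty}\rho(x)e^{-\epsilon t}\,dt=\rho(x)/\epsilon$; being of finite $\di_{\rho}$-length and leaving every $k$-compact subset of $\Omega$, $\gamma$ is $\di_{\rho}$-Cauchy with limit on $\partial_{\rho}\Omega$, so $\di_{\rho}(x)\le\rho(x)/\epsilon$.

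The main obstacle is the upper bound. One must know that the quasihyperbolic geodesic segment $[x_{0},x]$ extends past $x$ to a geodesic ray, and that this ray converges to a genuine point of $\partial_{\rho}\Omega$ with the $\di_{\rho}$-geometry under control; both facts come from the uniformization theory of \cite{BHK} --- Theorem \ref{uniformize}, together with the identification (also in \cite{BHK}) of $\partial_{\rho}\Omega$, equipped with the extension of $\di_{\rho}$ to the boundary, with the Gromov boundary of $(\Omega,k)$. Accordingly, in a fully detailed proof I would invoke Proposition 4.37 of \cite{BHK} for the estimate $\di_{\rho}(x)\asymp\rho(x)$ and Lemma 7.8 of \cite{BHK} for the passage to $k_{\rho}\asymp k$, rather than reproduce their proofs.
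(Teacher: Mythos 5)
Your proposal correctly reduces the lemma to the pointwise comparison $\di_\rho(x)\asymp\rho(x)$ and then, as the paper does, invokes Proposition 4.37 and Lemma 7.8 of \cite{BHK} for this estimate and the resulting bilipschitz equivalence of the quasihyperbolic metrics; the paper in fact offers no independent argument beyond this citation. Your accompanying sketch of the two bounds behind $\di_\rho\asymp\rho$ is a reasonable account of the BHK ideas (the upper bound in \cite{BHK} rests on rough starlikeness of $(\Omega,k)$ rather than on literally extending $[x_0,x]$ past $x$ as a geodesic, but since you explicitly defer the technicalities to \cite{BHK} this is immaterial).
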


\begin{defn}[Conformal modulus]Let $Q>1$. Let $X$ be a rectifiably connected metric space. Let $\mu$ be a Borel measure in $X$. The $Q$-modulus of a family $\mathcal{G}$ of curves in $X$ is $$\text{mod}_Q (\mathcal{G})=\text{inf} \int_X f^Q d\mu,$$ where the infimum is taken over all Borel functions $f:X\rightarrow [0,\infty]$ such that $$\int_{\gamma}f ds\geq 1$$ for all $\gamma\in\mathcal{G}$.
\end{defn}

\begin{defn}[Loewner Spaces]\label{loewner}Let $Q>1$. Let $X$ be a rectifiably connected metric space. Let $\mu$ be a Borel measure in $X$. Then $X$ is Loewner space if the function $$\varphi(t)=\text{inf}\{\text{mod}_Q(E,F;X):\Delta(E,F)\leq t\}$$ is positive for each $t>0$, where $E$ and $F$ are any non-degenerate disjoint continua in $X$ with $$\Delta(E,F)=\frac{\dist(E,F)}{\diam\;E\wedge\diam\;F}$$ and $(E,F;U)$ is the family of all curves in $U$ joining the sets $E,F\subset U.$
\end{defn}

The crucial fact from \cite{BHK} for us is that the resulting uniform space $(\Omega,\di_{\rho},\mu_{\rho})$ obtained through the deformation is $n$-Loewner. This is Proposition 7.14 in \cite{BHK}. Here $d\mu_{\rho}=\rho^n dx$. We note that conformal modulus is preserved in the deformed space.
\begin{lem}\label{modulus}
 Let $\Omega\subset\R^n$ be a $\delta$-hyperbolic domain, $\delta>0$.  Then there exists $M=M(n,\delta)$ such that for any pair of points  $x,y\in\Omega$ we have 
that $$\diam_k(\Gamma_i)\leq\log 2$$ where $\Gamma_i$ are the connected components of $\Gamma\backslash B(x,M\delta_{\Omega}(x,y))$, where $\Gamma$ is a quasihyperbolic geodesic from $x$ to $y$ in $\Omega$.
\end{lem}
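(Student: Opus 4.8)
The plan is to reduce the claim to a quasihyperbolic length estimate for a single sub-arc of $\Gamma$, and then to reach a contradiction by playing an upper bound for a conformal modulus (a spherical ring estimate in $\R^n$) against a lower bound for the same modulus (the $n$-Loewner property of the uniformized space $(\Omega,\di_\rho,\mu_\rho)$). For the reduction, observe that $|x-y|\le\delta_\Omega(x,y)$, so for $M>2$ we have $y\in B(x,M\delta_\Omega(x,y))$, and hence every component $\Gamma_i$ of $\Gamma\setminus B(x,M\delta_\Omega(x,y))$ is a sub-arc $\Gamma(a,b)$ with $a,b$ on the sphere $\partial B(x,M\delta_\Omega(x,y))$ and with $\Gamma_i\subset\Omega\setminus B(x,M\delta_\Omega(x,y))$. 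Since $\Gamma$ is a quasihyperbolic geodesic, $\diam_k(\Gamma_i)=k_\Omega(a,b)$, so it is enough to produce $M=M(n,\delta)$ with $k_\Omega(a,b)\le\log 2$; I argue by contradiction and assume $k_\Omega(a,b)>\log 2$. Fix also a curve $\sigma$ joining $x$ and $y$ in $\Omega$ with $\diam\sigma\le 2\delta_\Omega(x,y)$, so that $\sigma\subset\overline{B(x,2\delta_\Omega(x,y))}$ and $\sigma,\Gamma_i$ are disjoint continua in $\Omega$ separated by the euclidean ring $\{\,2\delta_\Omega(x,y)<|z-x|<M\delta_\Omega(x,y)\,\}$.

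Let $\mathcal G$ be the family of curves in $\Omega$ joining $\sigma$ to $\Gamma_i$. Every curve of $\mathcal G$ contains a sub-curve joining the two boundary spheres of the ring above, so by the monotonicity of the $n$-modulus and the classical value of the modulus of the family of curves joining the boundary spheres of a spherical ring, $\mathrm{mod}_n(\mathcal G)\le c(n)\bigl(\log(M/2)\bigr)^{1-n}=:\tau(M)$, and $\tau(M)\to 0$ as $M\to\infty$. This is the upper bound.

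For the lower bound I pass to the uniformized picture. The $n$-modulus is preserved under the conformal deformation $\rho$ (as recalled above), so $\mathrm{mod}_n(\mathcal G)$ equals the $n$-modulus of $\mathcal G$ computed in $(\Omega,\di_\rho,\mu_\rho)$, which by \cite{BHK} is $n$-Loewner with data depending only on $n$ and $\delta$. Therefore $\mathrm{mod}_n(\mathcal G)\ge\varphi\bigl(\Delta_\rho(\sigma,\Gamma_i)\bigr)$, where $\varphi=\varphi_{n,\delta}$ is the (positive, non-increasing) Loewner function and $\Delta_\rho(\sigma,\Gamma_i)=\dist_\rho(\sigma,\Gamma_i)\big/\bigl(\diam_\rho\sigma\wedge\diam_\rho\Gamma_i\bigr)$. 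Thus the proof is complete once one shows that, under the standing assumption $k_\Omega(a,b)>\log 2$, one has $\Delta_\rho(\sigma,\Gamma_i)\le\Delta_0$ for some $\Delta_0=\Delta_0(n,\delta)$ independent of $x$, $y$, $M$: fixing such a $\Delta_0$ and then choosing $M=M(n,\delta)$ so large that $\tau(M)<\varphi(\Delta_0)$ contradicts the upper bound, forcing $k_\Omega(a,b)\le\log2$ and hence $\diam_k(\Gamma_i)\le\log 2$.

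The estimate $\Delta_\rho(\sigma,\Gamma_i)\le\Delta_0(n,\delta)$ is the heart of the matter and the step I expect to be the main obstacle; it is here that the hypothesis $k_\Omega(a,b)>\log2$ and the geometry of the uniformization enter. By Lemma \ref{bilipschitz} the $k_\rho$-diameter of $\Gamma_i$ is at least $(\log 2)/C(\delta)$, and since $(\Omega,\di_\rho)$ is $A(\delta)$-uniform the quasihyperbolic growth estimate \eqref{quasihyperbolic_growth} forces $\Gamma_i$ to be a continuum whose $\di_\rho$-diameter is comparable to $\di_\rho(a)\wedge\di_\rho(b)$, hence to its $\di_\rho$-distance to $\partial_\rho\Omega$. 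On the other hand $\sigma$ joins $x$ to $y$, and $\Gamma$ is a $C(\delta)$-quasigeodesic of the (Gromov hyperbolic) uniform space $(\Omega,k_\rho)$ that runs from a $\di_\rho$-neighbourhood of $\sigma$ out to $a$, across $\Gamma_i$, and back through $b$; tracking how $\di_\rho$ varies along $\Gamma$ via the bilipschitz equivalence one bounds $\dist_\rho(\sigma,\Gamma_i)$ above by a constant multiple of $\diam_\rho\sigma\wedge\diam_\rho\Gamma_i$. Carrying this out — in particular excluding the scenario in which $\Gamma_i$ lies much ``deeper'' in the $\di_\rho$-metric than $\sigma$ while still being reachable from it only across a thin euclidean ring — is where the bookkeeping becomes delicate, as the paper's own comment on Lemma \ref{mainlem_1} anticipates.
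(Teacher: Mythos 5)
Your overall strategy is the same as the paper's: compare an upper bound on the conformal modulus of the curve family joining $\sigma$ to $\Gamma_i$ (the spherical-ring estimate, giving $(\log(M/2))^{1-n}$) with a lower bound coming from the $n$-Loewner property of $(\Omega,\di_\rho,\mu_\rho)$, and use conformal invariance of modulus to pass between the two pictures. The reduction to a single sub-arc, the choice of a competitor curve inside $B(x,2\delta_\Omega(x,y))$, and the contradiction by letting $M$ grow all line up with the paper. However, the step you flag as the ``heart of the matter''---showing that the relative distance $\Delta_\rho(\sigma,\Gamma_i)$ is bounded by a constant depending only on $n$ and $\delta$---is left incomplete, and the sketch you give (``tracking how $\di_\rho$ varies along $\Gamma$ via the bilipschitz equivalence'') does not pin down the argument.

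The paper's argument for this step is cleaner than your sketch anticipates, and does not require tracking $\di_\rho$ along $\Gamma$ or excluding a ``deep'' scenario by hand. The input is that $\Gamma$, being a quasihyperbolic geodesic and $(\Omega,\di_\rho)$ being an $A(\delta)$-uniform space, is itself an $A$-uniform curve for $\di_\rho$. Pick $x_0,y_0\in\Gamma_i$ achieving $k_\rho(\Gamma_i)\ge(\log 2)/C(\delta)$ (via Lemma~\ref{bilipschitz}). Then: (a) the growth estimate \eqref{quasihyperbolic_growth} for the $A$-uniform space $(\Omega,\di_\rho)$ gives
\[
\di_\rho(x_0,y_0)\ge\bigl(2^{1/(4A^2)}-1\bigr)\,\di_\rho(x_0);
\]
(b) the John half of uniformity, applied to the curve $\Gamma$ from $x$ to $y$ at the point $x_0$, gives
\[
\dist_\rho(\Gamma_i,\lambda)\le\dist_\rho\bigl(x_0,\{x,y\}\bigr)\le A\,\di_\rho(x_0),
\]
since $\lambda$ contains both endpoints $x$ and $y$ of $\Gamma$; and (c) the quasiconvexity half of uniformity gives
\[
A\,\diam_{\di_\rho}(\lambda)\ge A\,\di_\rho(x,y)\ge l_\rho(\Gamma)\ge\di_\rho(x_0,y_0).
\]
Combining (a)--(c) yields $\Delta_\rho(\lambda,\Gamma_i)\le A^2/(2^{1/(4A^2)}-1)$, a constant depending only on $\delta$, which is exactly the $\Delta_0(n,\delta)$ you need. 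So the gap in your proposal is precisely the failure to invoke, at this point, the uniform-curve property of $\Gamma$ in the deformed metric; once that is in hand the bound is one line, not a delicate piece of bookkeeping.
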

\begin{proof}
 Let $(\Omega,\di_{\rho},\mu_{\rho})$ be the $A$-uniform metric measure space obtained by conformal deformation (recall $\epsilon=\epsilon(\delta)/2$ was fixed), where $A=A(\delta)$ as in Theorem \ref{uniformize}. Let $C'>1$ and consider the annulus $$A_{C'}:=B(x,2C'\delta_{\Omega}(x,y))\backslash B(x,2\delta_{\Omega}(x,y)).$$ Let $C$ be the supremum of all numbers $C'$ such that there exists a connected subcurve $\Gamma_0\in [x,y]\backslash B(x,2C'\delta_{\Omega}(x,y))$ such that
 $$\diam_k(\Gamma_0)>\log 2.$$  If $-\infty\leq C<2$, then we again have the claim with $M=6$, so let us assume that $C\geq 3$ and fix $\Gamma_0$ be as above. 
 
 Let $\lambda$ be a curve connecting $x$ and $y$ which lies inside the ball $B(x,2\delta_{\Omega}(x,y))$. We have the following modulus estimate (see for example V\"ais\"al\"a \cite{Va}). 
 \begin{equation}
  \text{mod}(\lambda,\Gamma_0,\Omega)\leq\text{mod}(B(x,2C\delta_{\Omega}(x,y))\backslash B(x,2\delta_{\Omega}(x,y)),\R^n)\leq  \frac{1}{(\log C)^{n-1}}.
 \end{equation}
Next we find a lower bound for $\text{mod}_{\rho}(\lambda,\Gamma_0,\Omega)$, where $\text{mod}_{\rho}$ denotes the conformal modulus of paths computed in $(\Omega,\di_{\rho},\mu_{\rho})$. We have by Lemma \ref{bilipschitz} that 
\begin{equation}
 \diam_{k_{\rho}}(\Gamma_0)\geq \frac{\log 2}{C(\delta)}
\end{equation} where $C(\delta)$ is the constant from Lemma \ref{bilipschitz}.
Let $x_0$ and $y_0$ be points in $\Gamma_0$ such that $k_{\rho}(x_0,y_0)=k_{\rho}(\Gamma_0)$. Since $\Gamma$ is $A$-uniform we have \begin{equation}\diam_{\di_{\rho}}(\Gamma_0)\geq \di_{\rho}(x_0,y_0)\geq (2^{\frac{1}{4A^2}}-1) \di_{\rho}(x_0)\end{equation}
and \begin{equation}\dist_{\rho}(\Gamma_0,\lambda)\leq A\,\di_{\rho}(x_0).\end{equation}
We again have by uniformity that 
\begin{equation}
 A \,\diam_{\di_{\rho}}(\lambda)\geq l_{\rho}(\Gamma)\geq \di_{\rho}(x_0,y_0).
\end{equation}
Thus we obtain from the Loewner property a lower bound as required which depends only on $\delta$. This gives an upper bound for $C$ in terms of $\delta$. The theorem follows with $M=2C+1$.
\end{proof}

\begin{thm}\label{diameter_GeHa}Let $\Omega\subset\R^n$ be a $\delta$-Gromov hyperbolic domain. Then there exists a constant $M=M(\delta,n)$ such that for any pair of points $x,y\in\Omega$ we have 
\begin{equation}
 \diam(\Gamma)\leq M\delta_{\Omega}(x,y)
\end{equation} for any quasihyperbolic geodesic $\Gamma$ joining $x$ and $y$ in $\Omega$.
\end{thm}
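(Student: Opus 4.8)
We plan to prove Theorem \ref{diameter_GeHa} by showing that $\Gamma$ is contained in a euclidean ball centred at $x$ whose radius is a constant multiple, depending only on $\delta$ and $n$, of $r:=\delta_{\Omega}(x,y)$; the asserted bound $\diam(\Gamma)\le M\delta_{\Omega}(x,y)$ then follows immediately. (If $\delta=0$ we simply note that a $0$-hyperbolic domain is $\delta'$-hyperbolic for every $\delta'>0$ and apply the argument with such a $\delta'$.) The argument naturally splits into two regimes depending on the size of $\di_{\Omega}(x)$ relative to $r$, and essentially all the content is in converting the \emph{quasihyperbolic} information supplied by Lemma \ref{modulus} into \emph{euclidean} control.

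The easy regime is $\di_{\Omega}(x)>4r$. Here we argue directly: since $|x-y|\le\delta_{\Omega}(x,y)=r<\di_{\Omega}(x)$, the straight segment from $x$ to $y$ stays in $\Omega$, and integrating $|dz|/\di_{\Omega}$ along it gives $k_{\Omega}(x,y)\lesssim r/\di_{\Omega}(x)$, which is small. For any $z\in\Gamma$ we have $k_{\Omega}(x,z)\le k_{\Omega}(x,y)$ because $z$ lies on the geodesic $\Gamma$, so the lower bound \eqref{quasihyperbolic_growth_2} together with $\lambda_{\Omega}(x,z)\ge|x-z|$ yields $|x-z|\le\lambda_{\Omega}(x,z)\le\bigl(e^{k_{\Omega}(x,y)}-1\bigr)\di_{\Omega}(x)\lesssim \bigl(r/\di_{\Omega}(x)\bigr)\di_{\Omega}(x)\lesssim r$. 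Thus $\Gamma$ already lies in a bounded multiple of $B(x,r)$ in this case.

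The substantive regime is $\di_{\Omega}(x)\le 4r$. Here we invoke Lemma \ref{modulus}: with $M_1=M_1(\delta,n)$ the constant provided there, every connected component $\Gamma_i$ of $\Gamma\setminus B(x,M_1 r)$ has quasihyperbolic diameter at most $\log 2$. Plugging $k_{\Omega}(z,w)\le\log 2$ into \eqref{quasihyperbolic_growth_2} (and using $\lambda_{\Omega}\ge|\cdot|$) shows that for all $z,w\in\Gamma_i$ one has $|z-w|\le\di_{\Omega}(z)\wedge\di_{\Omega}(w)$; in particular each $\Gamma_i$ is contained in the euclidean ball $\overline{B}(q_i,\di_{\Omega}(q_i))$ about any of its points $q_i$. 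We take $q_i$ to be the point at which $\Gamma$, parametrized from $x$, first enters $\Gamma_i$: since $x$ lies in the open ball $B(x,M_1 r)$, this point lies on the sphere $\partial B(x,M_1 r)$, so $|q_i-x|=M_1 r$, and the $1$-Lipschitz property of $\di_{\Omega}(\cdot)$ gives $\di_{\Omega}(q_i)\le\di_{\Omega}(x)+M_1 r\le(4+M_1)r$. Hence every point of $\Gamma_i$ is within $M_1 r+\di_{\Omega}(q_i)\lesssim r$ of $x$; since the part of $\Gamma$ inside $\overline{B}(x,M_1 r)$ is trivially within $M_1 r$ of $x$, we conclude $\Gamma\subset\overline{B}(x,Cr)$ with $C=C(\delta,n)$.

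Combining the two regimes proves the theorem with $M=M(\delta,n)$ of the order of $M_1$. The one genuine obstacle is the final conversion in the second regime: Lemma \ref{modulus} controls only the quasihyperbolic size of the outer excursions of $\Gamma$, and a quasihyperbolic ball of fixed radius has euclidean diameter comparable to the distance from its centre to $\partial\Omega$; bounding that distance by a multiple of $r$ is precisely where the hypothesis $\di_{\Omega}(x)\lesssim r$ is needed, which is exactly why the regime $\di_{\Omega}(x)\gg r$ must be peeled off and disposed of by the elementary direct estimate above.
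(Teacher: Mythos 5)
Your proof is correct and follows the same basic strategy as the paper's (use Lemma~\ref{modulus} to show that each excursion of $\Gamma$ outside $B(x,M_1\delta_\Omega(x,y))$ has quasihyperbolic diameter at most $\log 2$, then convert this to euclidean control), but it is cleaner on two counts. First, you make explicit the regime $\di_\Omega(x)\gg\delta_\Omega(x,y)$: the paper's chain
\[
\di_\Omega(w_0)\le 2\di_\Omega(x_0)\le 2\bigl(M\delta_\Omega(x,y)+\di_\Omega(x)\bigr)\le(2M+4)\delta_\Omega(x,y)
\]
tacitly uses $\di_\Omega(x)\le 2\delta_\Omega(x,y)$, which needs an argument (it does hold whenever an excursion outside the ball exists, but the paper does not say so); your direct estimate via $k_\Omega(x,y)\lesssim\delta_\Omega(x,y)/\di_\Omega(x)$ disposes of that case upfront. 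Second, you avoid the ball separation property altogether. The paper takes the midpoint $w_0$ of each outer component, shows $\Gamma_0\subset B(w_0,\di_\Omega(w_0)/2)$, and then invokes ball separation against a competitor curve $\lambda\subset B(x,2\delta_\Omega(x,y))$ to locate $w_0$ near $x$; you instead anchor each outer component at its entry point $q_i$ on $\partial B(x,M_1\delta_\Omega(x,y))$ and use only the $1$-Lipschitz property of $\di_\Omega(\cdot)$ to bound $\di_\Omega(q_i)$. This shows the ball separation step in the paper's proof is in fact unnecessary: the chaining $|w_0-x|\le|w_0-x_0|+|x_0-x|$ already gives the same conclusion. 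So your argument both fills a small gap and uses a strictly smaller set of hypotheses (Lemma~\ref{modulus} plus the basic lower bound~\eqref{quasihyperbolic_growth_2}), which is a modest but genuine improvement.
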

\begin{proof}
 Choose $M$ as in Lemma \ref{modulus}. Choose a component $\Gamma_0$ of $\Gamma$ that lies outside the ball $B(x,M\delta_{\Omega}(x,y))$. If no such component exists then the claim in the theorem follows. Let $w_0\in\Gamma_0$ be the midpoint of $\Gamma_0$ in the metric $k$. For $\Gamma_0$ we have that 
 \begin{equation}
  \diam_k(\Gamma_0)\leq \log 2
 \end{equation}
 from which it follows that 
$$\Gamma_0\subset B(w_0,\di_{\Omega}(w_0)/2).$$ If $x_0\in\Gamma_0$ is the point where $\Gamma_0$ leaves $B(x,M\delta_{\Omega}(x,y))$, then 
\begin{equation}
 \di_{\Omega}(w_0)\leq 2\di_{\Omega}(x_0)\leq 2(M\delta_{\Omega}(x,y)+\di_{\Omega}(x))\leq (2M+4)\delta_{\Omega}(x,y).
\end{equation}
 Next, let $\lambda$ be a curve in $B(x,2\delta_{\Omega}(x,y))\cap\Omega$ joining $x$ and $y$. We have by the ball separation property that 
\begin{equation}
 \dist(w_0,\lambda)\leq c_0\di_{\Omega}(w_0) \leq c_0 (2M+4) \delta_{\Omega}(x,y).
\end{equation}
The same holds for any other component that falls under this case.\newline Thus $\Gamma\subset B(x,3Mc_0\,\delta_{\Omega}(x,y)).$

\end{proof}
\begin{proof}[Proof of Theorem \ref{main_cor}]
 In this case $\Omega$ is $(c_0,c,\infty)$-radially hyperbolic for some $c_0=c_0(\delta)$ and $c=c(\delta)$ by Theorem \ref{Gromovhyperbolic_implies} and Theorem \ref{diameter_GeHa}. The claim follows by Theorem \ref{main_1}.
\end{proof}
\begin{proof}[Proof of Corollary \ref{main_cor_2}] Using the $C^0$-boundary assumption we may find a sequence of Lipschitz domains $\{\Omega_k\}_{k\in\mathbb{N}}$ such that $\Omega\subset \Omega_{k+1}\Subset \Omega_k$, for each $k\in\mathbb{N}$ (see for example Corollary 1.2 of \cite{KRZ} or Corollary 1.3 of \cite{NRS}). The proof then proceeds as the proof of Corollary 1.3 of \cite{NRS}.

\end{proof}
It remains now to prove Lemma \ref{mainlem_1}.
\begin{proof}[Proof of Lemma \ref{mainlem_1}]
 Fix $x_1\in Q_1$ and $x_2\in Q_2$ such that  $(\Gamma_{x_1}\cap\Gamma_{x_2})\setminus\{x_0\}\neq\emptyset,$ and fix $x\in\Gamma_{x_1}\cap\Gamma_{x_2}$. We have by Lemma \ref{mainlem_0} and definitions that $$\delta_{\Omega}(x_1,x_2)\lesssim_{c_0,n} \di_{\Omega}(x_1)\simeq \di_{\Omega}(x_2),$$ and therefore $R=R(c_0,n)$ may be chosen based on the previous inequality, such that if $\Omega$ is $(c,R)$-radially hyperbolic with only diameter Gehring-Hayman in requirement (\textit{ii}), then by Lemma \ref{local_dia_implies_len}, the conclusion of Lemma \ref{mainlem_1} follows. We still need to show the existence of $R$ such that also allowing the $(c,R)$-length Gehring-Hayman condition provides the claim (cf. Remark \ref{equiv_1}). Towards this end we need to show that 
 $$\lambda_{\Omega}(x_1,x_2)\lesssim_{c_0,c,n} \di_{\Omega}(x_1)\simeq \di_{\Omega}(x_2),$$ when $x_1\in Q_1$ and $x_2\in Q_2$ such that  $(\Gamma_{x_1}\cap\Gamma_{x_2})\setminus\{x_0\}\neq\emptyset$. This will be achieved through the lemmas below.
 For convenience of notation we write $\Gamma_1$ for $\Gamma_{x_1}$ and $\Gamma_2$ for $\Gamma_{x_2}$.
 
 \begin{lem}\label{sublem_1} If $z\in \Gamma_i(x_0,x_i)$ for $i=1$ or $i=2$, then 
 \begin{equation}\label{eq_1.1}
 5(c_0+1)\di_{\Omega}(z)\geq  \,\di_{\Omega}(x_i).
 \end{equation} \end{lem}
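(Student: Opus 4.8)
The plan is to play the ball separation property for the radial geodesic $\Gamma_i$ off against the fact that $x_0$ and $x_i$ both lie in the connected thick core $\Omega_m^{(1)}$, which by construction is a union of Whitney cubes of edge length at least $2^{-m}$ and hence lies everywhere at euclidean distance at least $\sqrt n\,2^{-m}$ from $\partial\Omega$. The underlying point is that a quasihyperbolic geodesic joining two points of this core cannot dip much closer to $\partial\Omega$ than the core's own thickness: if it did, the small intrinsic ball it produces at such a dip would, by ball separation, have to disconnect the (connected, uniformly thick) core, which is impossible.

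First I would record the size of $Q_i$. Since $Q_i\in\mathcal A(Q)$ with $Q\in\mathcal P_m$, the proof of Lemma \ref{mainlem_0} shows $Q_i\subset 100c_0Q$ and $l(Q_i)\simeq l(Q)$; combined with $2^{-m}\le l(Q)<2^{-(m-2)}$ this gives $\di_\Omega(x_i)\simeq 2^{-m}$, with constants depending only on $c_0$ and $n$. Next, fixing $z\in\Gamma_i=\Gamma_i(x_0,x_i)$, I would examine the intrinsic ball $B:=B_\Omega\big(z,c_0\di_\Omega(z)\big)$ from the ball separation property and split into cases. If $x_i\in B$ then $\di_\Omega(x_i)\le\di_\Omega(z)+\lambda_\Omega(x_i,z)<(c_0+1)\di_\Omega(z)$; if $x_0\in B$ then $\sqrt n\,2^{-m}\le\di_\Omega(x_0)<(c_0+1)\di_\Omega(z)$, which suffices since $\di_\Omega(x_i)\lesssim 2^{-m}$. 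In the remaining case $x_0,x_i\in\Gamma_i\setminus B$ with $z$ on the subcurve between them, so $B$ separates $x_0$ from $x_i$ in $\Omega$; since $\Omega_m^{(1)}$ is connected and contains both points it must meet $B$, and for $w\in\Omega_m^{(1)}\cap B$ one has
\[
\sqrt n\,2^{-m}\le\di_\Omega(w)\le\di_\Omega(z)+\lambda_\Omega(w,z)<(1+c_0)\di_\Omega(z),
\]
using that $w$ lies in a Whitney cube of edge at least $2^{-m}$. Together with the bound $\di_\Omega(x_i)\lesssim 2^{-m}$ from the first step this closes the estimate.

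The hard part is only the bookkeeping: one must check that the implicit constants — the comparability $l(Q_i)\simeq l(Q)$ coming from the cover construction, the Whitney-decomposition constants, and the factor $(c_0+1)$ from ball separation — combine to the asserted $5(c_0+1)\di_\Omega(z)\ge\di_\Omega(x_i)$; in any event they combine to a constant of the form $c(c_0,n)$, which is all that is actually used in the proof of Lemma \ref{mainlem_1} and afterwards. Beyond this there is no genuine difficulty, just the case split on whether an endpoint of $\Gamma_i$ lies inside the separating intrinsic ball.
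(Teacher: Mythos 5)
Your argument is the same as the paper's: the paper fixes a competing curve $\lambda_i$ joining $x_i$ to $x_0$ inside $\Omega_m^{(1)}$, applies ball separation for $\Gamma_i$ at $z$ to produce a point $x'\in\lambda_i\subset\Omega_m^{(1)}$ with $\lambda_\Omega(z,x')\le c_0\di_\Omega(z)$, and uses the thickness of $\Omega_m^{(1)}$ (all cubes of edge $\ge 2^{-m}$) together with $\di_\Omega(x_i)\simeq 2^{-m}$ — exactly the content of your case-split. Your version is slightly more explicit about the degenerate cases where an endpoint of $\Gamma_i$ already lies in the separating ball, but the underlying mechanism and conclusion are identical.
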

 \begin{proof}This is a consequence of the separation property. We consider competing curves, denoted $\lambda_i$ for $i=1,2$ which join $x_i$ to $x_0$ in $\Omega_m^{(1)}$. These curves exist because $\Omega_m^{(1)}$ was defined to be connected. Suppose $z\in\Gamma_1(x_0,x_1)$. Then, by the ball separation property of $\Gamma_1$, there exists a point $x'$ in $\lambda_1$ such that $$\lambda_{\Omega}(z,x')\leq c_0\di_{\Omega}(z).$$  On the other hand, since $x'\in\Omega_m^{(1)}$, we also have $d_{\Omega}(x')\geq \frac{1}{5}\di_{\Omega}(x_2)$ and the claim follows.\end{proof}

 Next, fix $y_1\in\Gamma_1$ and $y_2\in\Gamma_2$ such that 
 \begin{equation}\label{eq_1.2}
 \lambda_{\Omega}(x_1,y_2)=\underset{y\in\Gamma_2}\inf \,\lambda_{\Omega}(x_1,y)\leq c_0\di_{\Omega}(x_1)
 \end{equation}
 and 
 \begin{equation}\label{eq_1.3}
  \lambda_{\Omega}(x_2,y_1)=\underset{y\in\Gamma_1}\inf \,\lambda_{\Omega}(y,x_2)\leq c_0\di_{\Omega}(x_2),    
 \end{equation} where the upper bounds are due to the ball separation property, since $\Gamma_1$ and $\Gamma_2$ are competing quasihyperbolic geodesics (see Figure \ref{fig2}).

 We note that by the triangle inequality
\begin{equation}\label{eq_1.4}
 \di_{\Omega}(y_1)\leq (c_0+1)\di_{\Omega}(x_2)\simeq \di_{\Omega}(x_1)
\end{equation} and 
\begin{equation}\label{eq_1.5}
\di_{\Omega}(y_2)\leq (c_0+1)\di_{\Omega}(x_1)\simeq \di_{\Omega}(x_2).
\end{equation}

\begin{figure}
\centering
 \begin{minipage}{.45\textwidth}
 \centering
 \begin{tikzpicture}[scale=0.75]
  \draw (0,-1) [out=120,in=280] to (-1,1) [out=90,in=240] to (-1,2) [out=80,in=220]  to (0,4) [out=330,in=110] to (1,2) [out=270,in=100] to (1,1) [out=240,in=40] to (0,-1);
  \filldraw (0,-1) circle [radius=1pt] node [anchor=north] {$x$};
  \filldraw (-1,1) circle [radius=1pt] node [anchor=east] {$y_1$};
  \filldraw (-1,2) circle [radius=1pt] node [anchor=east] {$x_1$};
  \filldraw (0,4) circle [radius=1pt] node [anchor=south] {$x_0$};
  \filldraw (1,2) circle [radius=1pt] node [anchor=west] {$x_2$};
  \filldraw (1,1) circle [radius=1pt] node [anchor=west] {$y_2$};
  %\draw (-1,2) [] to (1,1);
  \draw [dashed] (-1,2) [out=280,in=130] to (0,1.2) [out=320,in=170] to (1,1);
   \draw [dashed] (-1,1) [out=340,in=130] to (-0.5,1.1) to (1,2); [out=320,in=170] to (1,1);
 \end{tikzpicture}
 \caption{Illustrating the curves $\Gamma_1$, $\Gamma_2$, $\Gamma_{x_1y_2}$ and $\Gamma_{x_2y_1}$.}
 \label{fig2}
 \end{minipage}%
 \begin{minipage}{0.05\textwidth}
\mbox{}\\
\end{minipage}
 \begin{minipage}{.45\textwidth}
 \centering
 \begin{tikzpicture}[scale=0.75]
  \draw (0,-1) [out=120,in=280] to (-1,1) [out=90,in=240] to (-1,2) [out=80,in=220]  to (0,4) [out=330,in=110] to (1,2) [out=270,in=100] to (1,1) [out=240,in=40] to (0,-1);
  \filldraw (0,-1) circle [radius=1pt] node [anchor=north] {$x$};
  \filldraw (-1,1) circle [radius=1pt] node [anchor=east] {$y_1$};
  \filldraw (-1,2) circle [radius=1pt] node [anchor=east] {$x_1$};
  \filldraw (0,4) circle [radius=1pt] node [anchor=south] {$x_0$};
  \filldraw (1,2) circle [radius=1pt] node [anchor=west] {$x_2$};
  \filldraw (1,1) circle [radius=1pt] node [anchor=west] {$y_2$};
  \draw [dashed] (-1,2) [] to (1,1);
  \draw [dashed] (-1,2) [out=280,in=130] to (0,1.2) [out=320,in=170] to (1,1);
  %\draw [dashed] (-1,1) [out=340,in=130] to (-0.5,1.1) to (1,2); [out=320,in=170] to (1,1);
  \filldraw (0,1.2) circle [radius=1pt] node [anchor=north] {$z$};
  \filldraw (0,1.5) circle [radius=1pt] node [anchor=south] {$z_{\epsilon}$};
  \filldraw (0.98,1.5) circle [radius=1pt] node [anchor=west] {$z'$};
 \end{tikzpicture}
 \caption{Illustrating the competing curves $\gamma_1^{\epsilon}$ and $\Gamma_{x_1y_2}$.}
 \label{placement}
 \end{minipage}
\end{figure}

\begin{lem}\label{sublem_2} If $y_i\in \Gamma_i(x_0,x_i)$ for $i=1$ or $i=2$, then the claim of Lemma \ref{mainlem_1} is true.\end{lem}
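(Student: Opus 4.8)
The plan is to reduce, exactly as at the opening of the proof of Lemma \ref{mainlem_1}, to the single estimate $\lambda_{\Omega}(x_1,x_2)\lesssim_{c_0,c,n}\di_{\Omega}(x_1)\simeq\di_{\Omega}(x_2)$, and to obtain it by a detour through the nearest point $y_1$. By the symmetry of the roles of the two indices I assume $y_1\in\Gamma_1(x_0,x_1)$; then $x_1$ and $y_1$ both lie on the radial geodesic $\Gamma_{x_1}=\Gamma_1(x_0,x_1)$, so that $\Gamma_1(x_1,y_1)$ is a quasihyperbolic geodesic joining them.

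The first step is to bound $\delta_{\Omega}(x_1,y_1)$ \emph{without} appealing to $\lambda_{\Omega}(x_1,x_2)$, which is not yet controlled. Concatenating a path from $x_1$ to $x_2$ of diameter close to $\delta_{\Omega}(x_1,x_2)$ with a path from $x_2$ to $y_1$ of length close to $\lambda_{\Omega}(x_2,y_1)$ produces a path from $x_1$ to $y_1$ of diameter at most $\delta_{\Omega}(x_1,x_2)+\lambda_{\Omega}(x_2,y_1)$, whence
\begin{equation*}
\delta_{\Omega}(x_1,y_1)\le\delta_{\Omega}(x_1,x_2)+\lambda_{\Omega}(x_2,y_1)\lesssim_{c_0,n}\di_{\Omega}(x_1),
\end{equation*}
using \eqref{eq_1.3} together with the bound $\delta_{\Omega}(x_1,x_2)\lesssim_{c_0,n}\di_{\Omega}(x_1)\simeq\di_{\Omega}(x_2)$ coming from Lemma \ref{mainlem_0}. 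Since moreover $\di_{\Omega}(y_1)\simeq_{c_0}\di_{\Omega}(x_1)$ by Lemma \ref{sublem_1} and \eqref{eq_1.4}, this gives $\Delta_{\Omega}(x_1,y_1)\le R_0$ for some $R_0=R_0(c_0,n)$; fix $R\ge R_0$. Requirement (\textit{i}) of Definition \ref{approx_domain}, namely the $(c,R)$-diameter Gehring-Hayman property of the radial geodesic $\Gamma_{x_1}$, applied to the pair $x_1,y_1\in\Gamma_{x_1}$, then yields $\diam\bigl(\Gamma_1(x_1,y_1)\bigr)\le c\,\delta_{\Omega}(x_1,y_1)\lesssim\di_{\Omega}(x_1)$. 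On the other hand each point of $\Gamma_1(x_1,y_1)\subset\Gamma_1(x_0,x_1)$ lies at distance $\ge\di_{\Omega}(x_1)/(5(c_0+1))$ from $\partial\Omega$ by Lemma \ref{sublem_1}, so $\Gamma_1(x_1,y_1)$ meets only $c'(c_0,c,n)$ Whitney cubes, each of edge length comparable to $\di_{\Omega}(x_1)$; hence $l(\Gamma_1(x_1,y_1))\lesssim_{c_0,c,n}\di_{\Omega}(x_1)$ and therefore $\lambda_{\Omega}(x_1,y_1)\le l(\Gamma_1(x_1,y_1))\lesssim_{c_0,c,n}\di_{\Omega}(x_1)$. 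Combined with \eqref{eq_1.3} this gives $\lambda_{\Omega}(x_1,x_2)\le\lambda_{\Omega}(x_1,y_1)+\lambda_{\Omega}(y_1,x_2)\lesssim_{c_0,c,n}\di_{\Omega}(x_1)$.

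With this estimate the conclusion $\dist_k(Q_1,Q_2)\le M$ follows as in the opening reduction of the proof of Lemma \ref{mainlem_1}: whichever of the two alternatives of requirement (\textit{ii}) holds for $\Gamma_{x_1}\cup\Gamma_{x_2}$, it now applies to the pair $x_1,x_2$ — indeed both $\Lambda_{\Omega}(x_1,x_2)$ and $\Delta_{\Omega}(x_1,x_2)$ are bounded by a constant depending only on $c_0,c,n$ once $\lambda_{\Omega}(x_1,x_2)\lesssim\di_{\Omega}(x_1)$ is known, so one may enlarge $R$ accordingly — and it bounds the length or the diameter of a quasihyperbolic geodesic joining $x_1$ and $x_2$ by a multiple of $\di_{\Omega}(x_1)$; the curve $\Gamma_1(x_0,x_1)\cup\Gamma_2(x_0,x_2)$, which stays at distance $\gtrsim_{c_0}\di_{\Omega}(x_1)$ from $\partial\Omega$ by Lemma \ref{sublem_1}, then plays the role of the auxiliary curve $\gamma$ in the argument of Lemma \ref{local_dia_implies_len} (resp. Lemma \ref{local_dia_implies_len_2}), giving $k_{\Omega}(x_1,x_2)\lesssim_{c_0,c,n}1$ and hence $\dist_k(Q_1,Q_2)\le k_{\Omega}(x_1,x_2)\le M$. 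I expect the delicate point to be the first step: the estimate for $\delta_{\Omega}(x_1,y_1)$ must be routed through $x_2$ using only the already-established diameter bound and \eqref{eq_1.3}, since estimating it through $x_1$ directly would reintroduce the very quantity $\lambda_{\Omega}(x_1,x_2)$ we are trying to control.
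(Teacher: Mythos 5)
Your proposal is correct and takes essentially the same route as the paper's own proof: the paper works with the symmetric case $y_2\in\Gamma_2(x_0,x_2)$, bounds $\delta_{\Omega}(x_2,y_2)$ via the triangle inequality and \eqref{eq_1.2}, applies the diameter Gehring-Hayman property of the radial geodesic $\Gamma_2$ through Lemma \ref{local_dia_implies_len} to get $k_{\Omega}(x_2,y_2)\lesssim 1$ (and hence $\lambda_{\Omega}(x_2,y_2)\lesssim\di_{\Omega}(x_2)$ via \eqref{quasihyperbolic_growth_2}), and then invokes requirement (\textit{ii}) together with the competing curve in $\Omega_m^{(1)}$ exactly as you do. You merely swap indices and unfold the Whitney-cube counting that is internal to Lemma \ref{local_dia_implies_len} rather than citing the lemma directly; the content is the same.
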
 
\begin{proof}To this end, suppose $y_2\in\Gamma_2(x_0,x_2)$. Since we have that $\delta_{\Omega}(x_1,x_2)\lesssim \di_{\Omega}(x_1)$, it follows by the triangle inequality and \eqref{eq_1.2} that \begin{equation}\label{eq_2.1}\delta_{\Omega}(x_2,y_2)\lesssim \di_{\Omega}(x_1)\lesssim\di_{\Omega}(x_2).\end{equation}
 From the previous claim we also have 
 \begin{equation}\label{eq_2.2}
  \di_{\Omega}(x_2)\leq 5(c_0+1)\di_{\Omega}(y_2).
 \end{equation} Thus, by \eqref{eq_1.5}, \eqref{eq_2.2} and since $x_2,y_2\in\Gamma_2$, there exists $R_1=R_1(c_0)$ such that if $\Omega$ is $(c,R)$-radially hyperbolic for $R\geq R_1$, then by Lemma \ref{local_dia_implies_len}, we get $$k_{\Omega}(y_2,x_2)\leq c'(c_0,c,n),$$ which implies $\lambda_{\Omega}(y_2,x_2)\lesssim \di_{\Omega}(x_2)$, by \eqref{quasihyperbolic_growth_2}. This, together with \eqref{eq_1.2} gives
 $$\lambda_{\Omega}(x_1,x_2)\lesssim_{c_0,c,n} \di_{\Omega}(x_1)\simeq \di_{\Omega}(x_2).$$ Thus, since $\Gamma_1\cap\Gamma_2\neq\emptyset$, there exists $R_2=R_2(c_0,c,n)$ such that if $\Omega$ is $(c,R)$-radially hyperbolic for $R\geq \underset{1\leq i\leq 2}\max\,R_i$, then Lemma \ref{mainlem_1} follows by arguments in Lemma \ref{local_dia_implies_len}. Indeed, we only need to observe that there is a curve competing with $\Gamma_{x_1x_2}$ and joining $x_1$ and $x_2$ in $\Omega_m^{(1)}$.

 %Let $\gamma_2$ be a curve joining $x_2$ and $y_2$ in $\Omega$ such that \begin{equation}\label{eq_2.2}\diam(\gamma_2)\lesssim \di_{\Omega}(x_1).\end{equation} For each $y\in\Gamma(y_2,x_2)$, we have by the separation property that there exists $y'\in\lambda_2$ such that \begin{equation}\label{eq_2.3}\di_{\Omega}(y,y')\lesssim d_{\Omega}(y)\end{equation} We also know from the previous claim that \begin{equation}\label{eq_2.4}\di_{\Omega}(y)\gtrsim d_{\Omega}(x_2)\simeq d_{\Omega}(x_1)\end{equation} and thus by equations (\ref{eq_2.1}), (\ref{eq_2.3}), (\ref{eq_2.4}) and triangle ineqality we have \begin{equation}\label{eq_2.5}\di_{\Omega}(y,x_2)\lesssim\di_{\Omega}(y).\end{equation} The claim would follow                                                                                                                                                                                                                                                                                                                                                                                                                                                                                                                                                                                                                                 
 \end{proof}                                                                                                                                                                                                                                                                                                                                          Therefore we may assume $y_i\in \Gamma_i(x_i,x)$ for $i=1,2$.                                                                                                                                                                                                                                                                                                                                          Fix $\Gamma_{x_1y_2}$, a quasihyperbolic geodesic joining $x_1$ to $y_2$ in $\Omega$ and
and $\Gamma_{x_2y_1}$, a quasihyperbolic geodesic joining $x_2$ to $y_1$ in $\Omega$. 
                                                                                                                                                                                                                                                                                                                                                                                                                                                                                                                                                                                                                                                                               
                                                                                                                                                                                                                                                                                                                                                                                                                                                                                                                                                                                                                                                                                \begin{lem}\label{sublem_3}If $z\in\Gamma_{x_1y_2}$ then \begin{equation}\label{eq_3.1}100(c_0+1)^3\di_{\Omega}(z)\geq \di_{\Omega}(y_2).\end{equation}\end{lem}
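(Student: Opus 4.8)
The plan is to obtain \eqref{eq_3.1} directly from the $c_0$-ball separation property applied to the quasihyperbolic geodesic $\Gamma_{x_1y_2}$, using the concatenated curve $\gamma_0:=\Gamma_1(x_1,x)\cup\Gamma_2(x,y_2)$ — which joins $x_1$ to $y_2$ inside $\Omega$ — as a competitor along which the boundary distance is controlled from below.

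First I would record two preliminary facts available under the standing assumption $y_i\in\Gamma_i(x_i,x)$. Since $\Gamma_i(x_i,x)\subseteq\Gamma_i(x_0,x_i)$, Lemma \ref{sublem_1} applies at $y_1$ and $y_2$ and gives $5(c_0+1)\di_\Omega(y_1)\ge\di_\Omega(x_1)$ and $5(c_0+1)\di_\Omega(y_2)\ge\di_\Omega(x_2)$; combining these with \eqref{eq_1.4} and \eqref{eq_1.5} yields the explicit two-sided comparison $\di_\Omega(x_1)\le 5(c_0+1)^2\di_\Omega(x_2)$ and $\di_\Omega(x_2)\le 5(c_0+1)^2\di_\Omega(x_1)$, and hence, again by \eqref{eq_1.5}, a bound of the form $\di_\Omega(y_2)\le C(c_0)\,\bigl(\di_\Omega(x_1)\wedge\di_\Omega(x_2)\bigr)$. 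Secondly, $\gamma_0$ is contained in $\Gamma_1(x_0,x_1)\cup\Gamma_2(x_0,x_2)$, so Lemma \ref{sublem_1} also gives the pointwise lower bound $\di_\Omega(w)\ge\frac{1}{5(c_0+1)}\bigl(\di_\Omega(x_1)\wedge\di_\Omega(x_2)\bigr)\ge c'(c_0)\,\di_\Omega(y_2)$ for every $w\in\gamma_0$.

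Next I would fix $z\in\Gamma_{x_1y_2}$ and set $B:=B_\Omega(z,c_0\di_\Omega(z))$. If $x_1\in B$, then $\di_\Omega(x_1)\le\di_\Omega(z)+\lambda_\Omega(z,x_1)<(c_0+1)\di_\Omega(z)$, and \eqref{eq_3.1} follows at once from \eqref{eq_1.5}; if $y_2\in B$ the same reasoning applies verbatim. So I may assume $x_1,y_2\notin B$. Since $z$ lies on the geodesic $\Gamma_{x_1y_2}$ and $x_1,y_2$ are its endpoints, the $c_0$-ball separation property of $\Gamma_{x_1y_2}$ forces $B$ to separate $x_1$ from $y_2$ in $\Omega$; consequently the competitor $\gamma_0$, being a curve from $x_1$ to $y_2$ in $\Omega$, must meet $B$, say at a point $z'$. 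Then $\lambda_\Omega(z,z')<c_0\di_\Omega(z)$, so $\di_\Omega(z')<(c_0+1)\di_\Omega(z)$, while the second preliminary fact gives $\di_\Omega(z')\ge c'(c_0)\di_\Omega(y_2)$; combining, $\di_\Omega(z)>\frac{c'(c_0)}{c_0+1}\di_\Omega(y_2)$, which is \eqref{eq_3.1} once the constants are organised.

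I do not expect a genuine geometric obstacle here: the argument uses only the ball separation property (which is part of the hypothesis that $\Omega$ is $(c_0,c,R')$-radially hyperbolic) together with the already-proved Lemma \ref{sublem_1}, and it is precisely the reduction to the case $y_i\in\Gamma_i(x_i,x)$ — carried out just before the statement, with the other alternative disposed of in Lemma \ref{sublem_2} — that simultaneously makes $\gamma_0$ available and lets Lemma \ref{sublem_1} be applied along it. What does require attention is the constant: the estimate above passes through several applications of Lemma \ref{sublem_1}, \eqref{eq_1.4} and \eqref{eq_1.5}, each contributing a factor of order $c_0+1$, and one has to arrange these boundedly many factors — possibly estimating $\di_\Omega$ on the two halves of $\gamma_0$ a little more carefully than above — so that the resulting constant is no worse than the $100(c_0+1)^3$ asserted in \eqref{eq_3.1}. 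This bookkeeping is the only real point to verify.
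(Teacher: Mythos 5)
Your proposal is correct and follows the same basic mechanism as the paper's Case~1: apply the $c_0$-ball separation property of $\Gamma_{x_1y_2}$ at $z$, use a competitor contained in $\Gamma_1\cup\Gamma_2$, invoke Lemma~\ref{sublem_1} to bound $\di_\Omega$ from below along that competitor, and transfer to $z$ by the triangle inequality. The meaningful difference is the competitor. The paper uses $\mu_1=\tilde{\Gamma}_1(x_1,x_0)\ast\Gamma_2(x_0,y_2)$, which detours through $x_0$, and then splits into two cases according to whether the point $z'\in\mu_1\cap B_\Omega(z,c_0\di_\Omega(z))$ produced by ball separation lies on $\Gamma_2(x_2,y_2)$ or not; the second case is handled by a separate limiting argument involving near-minimizing curves $\gamma_1^\epsilon$ for $\lambda_\Omega(x_1,y_2)$ and the minimality of $y_2$ on $\Gamma_2$. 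Your competitor $\gamma_0=\Gamma_1(x_1,x)\ast\Gamma_2(x,y_2)$ shortcuts through $x$ rather than $x_0$, is still entirely contained in $\Gamma_1\cup\Gamma_2$ (so Lemma~\ref{sublem_1} applies at every point of it), and the second case simply does not arise. This is a genuine simplification; in fact the paper's own $\mu_1$ is also contained in $\Gamma_1\cup\Gamma_2(x_0,y_2)$, so the necessity of the paper's Case~2 is already questionable, and your choice makes the issue moot. You also correctly handle the degenerate case $x_1\in B$ or $y_2\in B$ that ball separation does not literally cover, which the paper glosses over. The only caveat, which you flag yourself, is the constant: chasing your estimates gives something on the order of $25(c_0+1)^5$ rather than the stated $100(c_0+1)^3$ (the bottleneck is going from $\di_\Omega(x_2)$ back to $\di_\Omega(y_2)$ via $\di_\Omega(x_1)$, each step costing a factor of order $c_0+1$). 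This is harmless for how Lemma~\ref{sublem_3} is used downstream — in Lemma~\ref{sublem_4} and in Lemma~\ref{mainlem_1} only the dependence of the constant on $c_0$ matters — but it does mean the numerical constant in \eqref{eq_3.1} would need to be relaxed if one reproduces the lemma verbatim with your argument.
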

                                                                                                                                                                                                                                                                                                                                                                                                                                                                                                                                                                                                                                                                                \begin{proof}Fix $z\in\Gamma_{x_1y_2}$.
Let $\mu_1$ denote the competing curve $\tilde{\Gamma}_1(x_1,x_0)*\Gamma_2(x_0,y_2)$ joining $x_1$ to $y_2$ (where $\tilde{\Gamma}$ denotes the reversed curve of $\Gamma$). Choose a point $z'\in \mu_1$ such that 
\begin{equation}\label{eq_3.2}
\lambda_{\Omega}(z,z')\leq c_0 \di_{\Omega}(z),
\end{equation} which we get from ball separation.
If $z'\notin \Gamma_2(x_2,y_2)$, then by \eqref{eq_1.1} and \eqref{eq_1.5}, we have that \begin{equation}\label{eq_3.3}
100(c_0+1)^2\di_{\Omega}(z')\geq \di_{\Omega}(x_1)(c_0+1)\geq \di_{\Omega}(y_2)
\end{equation} and by the triangle inequality with equation \eqref{eq_3.2} that 
\begin{equation}\label{eq_3.4}
\di_{\Omega}(z)(c_0+1)\geq\di_{\Omega}(z').\end{equation}
The claim follows by combining \eqref{eq_3.3} and \eqref{eq_3.4}.

So we assume next that $z'\in \Gamma_2(x_2,y_2)$ (see Figure \ref{placement}). We use now the fact that $y_2$ is taken to be at minimal intrinsic distance from $x_1$; see equation (\ref{eq_1.5}). Let $\gamma_1^{\epsilon}$ be a curve joining $x_1$ and $y_2$ such that \begin{equation}\label{eq_3.5}\lambda_{\Omega}(x_1,y_2)\geq 
l(\gamma_1^{\epsilon})-\epsilon\end{equation}  Choose a point $z_{\epsilon}\in\gamma_1^{\epsilon}$ such that 
\begin{equation}\label{eq_3.6}
 \lambda_{\Omega}(z,z_{\epsilon})\leq c_0\di_{\Omega}(z),
\end{equation} obtained by applying ball separation to the geodesic $\Gamma_{x_1y_2}$ with respect to the competing curve $\gamma_1^{\epsilon}$.
From equations \eqref{eq_3.2} and \eqref{eq_3.6} we have 
\begin{equation}\label{eq_3.7}
 \lambda_{\Omega}(z_{\epsilon},z')\leq 2c_0\di_{\Omega}(z)
\end{equation}
From equations \eqref{eq_3.5} and the choice of $y_2$ we have 
\begin{equation}\label{eq_3.8}
 \begin{split}
  \lambda_{\Omega}(x_1,z_{\epsilon})+\lambda_{\Omega}(z_{\epsilon},y_2)&\leq l(\gamma_1^{\epsilon})\\& \leq \lambda_{\Omega}(x_1,y_2)+\epsilon \\& \leq \lambda_{\Omega}(x_1,z')+\epsilon \\& \leq \lambda_{\Omega}(x_1,z_{\epsilon})+\lambda_{\Omega}(z_{\epsilon},z')+\epsilon
 \end{split}
\end{equation}
and therefore by equations \eqref{eq_3.6},\eqref{eq_3.7} and \eqref{eq_3.8} and passing to the limit we get
\begin{equation}\label{eq_3.9}
\begin{split}
 \di_{\Omega}(y_2)&\leq \di(z,y_2)+\di_{\Omega}(z)\\& \leq \underset{\epsilon\rightarrow 0}\lim \,(\,\di(z,z_{\epsilon})+\di(z_{\epsilon},y_2)+\di_{\Omega}(z)\,)\\& \leq (3c_0+1)\di_{\Omega}(z)
\end{split} 
\end{equation}
which was the claim.
\end{proof}
Similarly for points $z\in\Gamma_{x_2y_1}$, we get
\begin{equation}\label{eq_3.10}
 \di_{\Omega}(y_1)\leq 100(c_0+1)^3\di_{\Omega}(z).
\end{equation}

\begin{lem}\label{sublem_4} For $i=1,2$, we have
\begin{equation}\label{eq_4.1}
\di_{\Omega}(y_i)\leq 100(c_0+1)^4\di_{\Omega}(z) 
\end{equation}for any $z\in \Gamma_i(x_i,y_i)$.\end{lem}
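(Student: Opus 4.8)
The plan is to obtain Lemma~\ref{sublem_4} by pure bookkeeping from estimates already in hand: the separation estimate \eqref{eq_1.1} of Lemma~\ref{sublem_1} and the triangle-inequality bounds \eqref{eq_1.4}, \eqref{eq_1.5}. No geometric input beyond these already-established inequalities will be needed at this stage (in particular, ball separation and the Gehring--Hayman hypotheses enter only through them). By the symmetry of the hypotheses on the pairs $(x_1,y_1)$ and $(x_2,y_2)$ it suffices to treat $i=1$.

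The first step is to note that, since we have reduced to $y_i\in\Gamma_i(x_i,x)$ and $\Gamma_i$ is the geodesic from $x_0$ to $x_i$, the subcurve $\Gamma_1(x_1,y_1)$ lies inside $\Gamma_1=\Gamma_1(x_0,x_1)$; hence \eqref{eq_1.1} applies verbatim and gives $\di_{\Omega}(x_1)\le 5(c_0+1)\di_{\Omega}(z)$ for every $z\in\Gamma_1(x_1,y_1)$. So the task is reduced to showing that $\di_{\Omega}(y_1)$ is comparable to $\di_{\Omega}(x_1)$ with a constant depending only on $c_0$.

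The second step produces this comparison via a short ``round trip'' through $y_2$. Applying \eqref{eq_1.1} to the point $y_2\in\Gamma_2=\Gamma_2(x_0,x_2)$ gives $\di_{\Omega}(x_2)\le 5(c_0+1)\di_{\Omega}(y_2)$, while \eqref{eq_1.5} gives $\di_{\Omega}(y_2)\le(c_0+1)\di_{\Omega}(x_1)$; together these yield $\di_{\Omega}(x_2)\le 5(c_0+1)^2\di_{\Omega}(x_1)$. Feeding this into \eqref{eq_1.4}, namely $\di_{\Omega}(y_1)\le(c_0+1)\di_{\Omega}(x_2)$, gives $\di_{\Omega}(y_1)\le 5(c_0+1)^3\di_{\Omega}(x_1)$. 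Combined with the bound from the first step this produces $\di_{\Omega}(y_1)\le 25(c_0+1)^4\di_{\Omega}(z)\le 100(c_0+1)^4\di_{\Omega}(z)$ for every $z\in\Gamma_1(x_1,y_1)$, which is the assertion. The case $i=2$ follows by interchanging the indices $1$ and $2$ throughout.

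There is no genuine obstacle here; the only things to be careful about are (i) verifying that the reduction $y_i\in\Gamma_i(x_i,x)$ really makes $\Gamma_i(x_i,y_i)$ a subcurve of $\Gamma_i(x_0,x_i)$, so that \eqref{eq_1.1} is legitimately applicable both to the points of $\Gamma_i(x_i,y_i)$ and to $y_i$ itself, and (ii) tracking the numerical constants so that they stay consistent with those fixed in Lemmas~\ref{sublem_1} and \ref{sublem_3}. It is worth remarking that one could instead argue through ball separation for $\Gamma_1(x_1,y_1)$ against a competing curve built from $\Gamma_{x_1y_2}$, $\Gamma_{x_2y_1}$ and a piece of $\Gamma_2$, using the lower bounds \eqref{eq_3.1} and \eqref{eq_3.10}, but that route is longer and yields a weaker constant; the arithmetic argument above is both shorter and sharp enough, and crucially it uses the symmetry of the construction to get the comparison $\di_{\Omega}(x_1)\simeq\di_{\Omega}(x_2)$ with a $c_0$-only constant, avoiding the Whitney-cube volume estimates that would bring in a dependence on $n$.
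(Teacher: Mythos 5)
Your proof is correct, and it takes a genuinely different and shorter route than the paper's. The paper proves Lemma~\ref{sublem_4} by invoking the ball separation property once more: it applies separation for the geodesic subarc $\Gamma_i(x_i,y_i)$ against the composite competing curve $\tilde{\Gamma}_1(x_0,x_1)\ast\Gamma_2(x_0,x_2)\ast\Gamma_{x_2y_1}$ (and symmetrically for $i=2$), then reads off the lower bound $\di_{\Omega}(z)\gtrsim\di_{\Omega}(y_i)$ from the lower bounds on $\di_{\Omega}$ along the three pieces, which come from Lemma~\ref{sublem_1} and Lemma~\ref{sublem_3} (estimates \eqref{eq_3.1}, \eqref{eq_3.10}). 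You instead observe that no new separation argument is needed at all: since $\Gamma_i(x_i,y_i)\subset\Gamma_i(x_0,x_i)$, Lemma~\ref{sublem_1} already yields $\di_{\Omega}(z)\geq\frac{1}{5(c_0+1)}\di_{\Omega}(x_i)$ directly, and the remaining comparison $\di_{\Omega}(y_i)\leq 5(c_0+1)^3\di_{\Omega}(x_i)$ is pure arithmetic from \eqref{eq_1.1} (applied at $y_{3-i}\in\Gamma_{3-i}$), \eqref{eq_1.4} and \eqref{eq_1.5}. This is cleaner, decouples Lemma~\ref{sublem_4} from Lemma~\ref{sublem_3} (which is then needed only later in Cases 1 and 2), and even gives the slightly better constant $25(c_0+1)^4$. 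The route you sketch at the end as an alternative is essentially the paper's argument, so your remark about it being longer and yielding a weaker constant is an accurate characterization of the comparison.
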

\begin{proof}This follows by comparison with the competing curves $\tilde{\Gamma}_2(x_0,x_2)\ast\Gamma_1(x_0,x_1)\ast \Gamma_{x_1y_2}$ and $\tilde{\Gamma}_1(x_0,x_1)\ast\Gamma_1(x_0,x_2)\ast \Gamma_{x_2y_1}$ respectively for the geodesics $\Gamma_2(x_2,y_2)$ and $\Gamma_1(x_1,y_1)$ and using the ball separation property.\end{proof}
Next, we assume without loss of generality that 
\begin{equation}\label{eq_5.1}
k_{\Omega}(\Gamma_1(y_1,x))\leq k_{\Omega}(\Gamma_2(y_2,x)),
\end{equation}
and complete the proof of Lemma \ref{mainlem_1} by considering the following two possible cases.
\newline
Case 1: Assume first that 
$
\di_{\Omega}(y_i)\geq \frac{\di_{\Omega}(x_i)}{10(c_0+10)^2}
$
for either $i=1$ or $i=2$; say 
\begin{equation}\label{eq_5.2}
\di_{\Omega}(y_1)\geq \frac{\di_{\Omega}(x_1)}{10(c_0+10)^2}
\end{equation}
In this case we have $$\delta_{\Omega}(x_1,y_1)\leq \delta_{\Omega}(x_1,x_2)+\delta_{\Omega}(y_1,x_2)\lesssim \di_{\Omega}(y_1)\simeq \di_{\Omega}(x_1),$$ where the second equivalence comes from our assumption and \eqref{eq_1.3}. Thus there exists $R_3=R_3(c_0,c)$ such that if $\Omega$ is $(c,R)$-radially hyperbolic for $R\geq\underset{1\leq i\leq 3}\max \,R_i$, then by Lemma \ref{local_dia_implies_len} (which we may apply by Lemma \ref{sublem_4}), we have  $k_{\Omega}(x_i,y_i)\leq c'(c_0,c,n)$ and
\begin{equation}\label{eq_5.3}
l(\Gamma_1(x_1,y_1))\lesssim \di_{\Omega}(x_1).
\end{equation}

Now by \eqref{eq_5.3} we get
$$\lambda_{\Omega}(x_1,x_2)\leq l(\Gamma_1(x_1,y_1))+\lambda_{\Omega}(y_1,x_2) \lesssim \di_{\Omega}(x_1).$$ 
Thus there exists $R_4=R_4(c_0,c,n)$ such that if $\Omega$ is $(c,R)$-radially hyperbolic for $R\geq\underset{1\leq i\leq 4}\max \,R_i$, we have $l(\Gamma_{x_1x_2})\lesssim \di_{\Omega}(x_1)$, where $\Gamma_{x_1x_2}$ is any quasihyperbolic geodesic joining $x_1$ and $x_2$. 
%(Note that it is here that we find it is sufficient to have even the length version of Gehring-Hayman for pairs of points lying in intersecting curve, because we deduce at least in Case 1, that $\delta_{\Omega}(x_1,x_2)\lesssim \di_{\Omega}(x_1)$ implies $\lambda_{\Omega}(x_1,x_2)\lesssim \di_{\Omega}(x_1)$).
It then follows by arguments similar to those in Lemma \ref{local_dia_implies_len}, that $k_{\Omega}(x_1,x_2)\leq c'(c_0,c,n)$ which is the claim.

Case 2: Next we consider the case 
\begin{equation}\label{eq_5.4}
\di_{\Omega}(y_i)< \frac{\di_{\Omega}(x_i)}{10(c_0+10)^2}
\end{equation}
for $i=1,2$. 
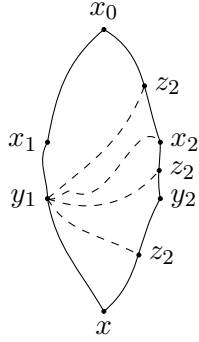
\begin{figure}
 \begin{tikzpicture}[scale=0.75]
  \draw (0,-1) [out=120,in=280] to (-1,1) [out=90,in=240] to (-1,2) [out=80,in=220]  to (0,4) [out=330,in=110] to (1,2) [out=270,in=100] to (1,1) [out=240,in=40] to (0,-1);
  \filldraw (0,-1) circle [radius=1pt] node [anchor=north] {$x$};
  \filldraw (-1,1) circle [radius=1pt] node [anchor=east] {$y_1$};
  \filldraw (-1,2) circle [radius=1pt] node [anchor=east] {$x_1$};
  \filldraw (0,4) circle [radius=1pt] node [anchor=south] {$x_0$};
  \filldraw (1,2) circle [radius=1pt] node [anchor=west] {$x_2$};
  \filldraw (1,1) circle [radius=1pt] node [anchor=west] {$y_2$};
  %\draw (-1,2) [] to (1,1);
  %\draw [dashed] (-1,2) [out=280,in=130] to (0,1.2) [out=320,in=170] to (1,1);
  \draw [dashed] (-1,1) [out=340,in=130] to (-0.5,1.1) to (1,2); [out=320,in=170] to (1,1);
  \filldraw (0.98,1.5) circle [radius=1pt] node [anchor=west] {$z_2$};
  \filldraw (0.72,3) circle [radius=1pt] node [anchor=west] {$z_2$};
  \filldraw (0.62,0) circle [radius=1pt] node [anchor=west] {$z_2$};
  \draw [dashed] (-1,1) [out=340,in=230] to (0.98,1.5);
  \draw [dashed] (-1,1) [out=40,in=250] to (0.72,3);
  \draw [dashed] (-1,1) [out=300,in=150] to (0.62,0);
 \end{tikzpicture}
 \caption{Illustrating the three possibilities for the position of $z_2$.}
 \label{fig4}
\end{figure}
By the ball separation property of $\Gamma_1$, there exists a point $z_2\in\Gamma_2$ (see Figure \ref{fig4}) such that 
\begin{equation}\label{eq_5.5}
\lambda_{\Omega}(y_1,z_2)\leq c_0\di_{\Omega}(y_1).
\end{equation}
 Due to our assumption that $\di_{\Omega}(y_1)$ is small enough compared to $\di_{\Omega}(x_1)$, Lemma \ref{sublem_1} tells that $z_2\notin \Gamma_2(x_0,x_2).$ Indeed, note that
 $$\di_{\Omega}(x_2)\leq 5(c_0+1)\di_{\Omega}(z_2)\leq 5(c_0+1)^2\di_{\Omega}(y_2)
 $$ contradicts \eqref{eq_5.4}.

 We next assume that that $z_2\in\Gamma_2(y_2,x)$. Then by \eqref{eq_1.3},\eqref{eq_5.4} and \eqref{eq_5.5} we have that 
\begin{equation}\label{eq_5.6}
\lambda_{\Omega}(x_2,y_2)\leq \lambda_{\Omega}(x_2,z_2)\leq \lambda_{\Omega}(x_2,y_1)+\lambda_{\Omega}(y_1,z_2)\lesssim \di_{\Omega}(x_2).
\end{equation}
 We have by \eqref{eq_1.3} and \eqref{eq_5.6}
 \begin{equation}\label{eq_5.7}
\lambda_{\Omega}(x_1,x_2)\leq \lambda_{\Omega}(x_1,y_2)+\lambda_{\Omega}(y_2,x_2)\lesssim \di_{\Omega}(x_2).
\end{equation}
Thus there exists $R_5=R_5(c_0,c,n)$ such that if $\Omega$ is $(c,R)$-radially hyperbolic for $R\geq\underset{1\leq i\leq 5}\max \,R_i$ then $k_{\Omega}(x_1,x_2)\leq c'(c_0,c,n)$ and the claim of the lemma is true in this case. 

Next, note that if 
$$\di_{\Omega}(y_1)\leq \frac{\di_{\Omega}(y_2)}{200(c_0+1)^5}$$
and $z_2\in\Gamma_2(x_0,y_2)$, then we get a contradiction between the consequences of ball separation for $\Gamma_1$; \eqref{eq_5.5}, that 
\begin{equation}\label{eq_5.8}
\di_{\Omega}(y_1)(c_0+1)\geq \di_{\Omega}(y_1)+\lambda_{\Omega}(y_1,z_2)\geq \di_{\Omega}(z_2),\end{equation}
and that of Lemma \ref{sublem_4}. This forces $z_2\in\Gamma_2(y_2,x)$, which has been considered previously. 

We interchange the roles of the pairs $(y_1,\Gamma_1)$ and $(y_2,\Gamma_2)$ in above argument to observe that the only remaining case is when $\di_{\Omega}(y_1)\simeq_{c_0} \di_{\Omega}(y_2)$. 

Thus we only need to check the claim in the case when $z_2\in\Gamma_2(x_2,y_2)$ and $\di_{\Omega}(y_1)\simeq_{c_0} \di_{\Omega}(y_2)$. Let $\Gamma_{y_1z_2}$ be a fixed geodesic joining $y_1$ and $z_2$. Then, by \eqref{eq_5.8},
$$100(c_0+1)^4\di_{\Omega}(z_2)\geq \di_{\Omega}(y_2)\simeq \di_{\Omega}(y_1),$$
(coming from Lemma \ref{sublem_4}) and since $y_1$ and $z_2$ lie on intersecting radial geodesics, there exists $R_6=R_6(c_0)$ such that if$\Omega$ is $(c,R)$-radially hyperbolic for $R\geq\underset{1\leq i\leq 6}\max \,R_i$ we get \begin{equation}\label{eq_5.9}l(\Gamma_{y_1z_2})\lesssim \di_{\Omega}(y_1),\end{equation}                                                                                                                                                                                                                                                                                                                                                                                                                                                                                                                                                                                                                  and arguments similar to the ones in Lemma \ref{sublem_3} provide \begin{equation}\label{eq_5.10}\di_{\Omega}(y_1)\lesssim \di_{\Omega}(w)\end{equation}for all $w\in\Gamma_{y_1z_2}$. Then by a covering argument considering equations \eqref{eq_5.9} and \eqref{eq_5.10} we get $k_{\Omega}(y_1,z_2)\leq c'(c_0,c,n)$. Comparing now the quasihyperbolic lengths of the curves $\tilde{\Gamma}_{y_1z_2}\ast \Gamma_1(y_1,x)$ and $\Gamma_2(z_2,x)$ and recalling assumption \eqref{eq_5.1}, we get $k_{\Omega}(z_2,y_2)\leq c'(c_0,c,n)$ and thus
\begin{equation}\label{eq_5.11}
 \lambda_{\Omega}(z_2,y_2)\lesssim \di_{\Omega}(y_2)
\end{equation}
by \eqref{quasihyperbolic_growth_2}.
Therefore by \eqref{eq_1.2}, \eqref{eq_1.3}, \eqref{eq_5.5} and \eqref{eq_5.11} we get
\begin{equation}
 \begin{split}
  \lambda_{\Omega}(x_1,x_2) &\leq \lambda_{\Omega}(x_1,y_2)+\lambda_{\Omega}(y_2,x_2)\\& \leq \lambda_{\Omega}(x_1,y_2) + \lambda_{\Omega}(y_2,z_2)+\lambda_{\Omega}(z_2,y_1)+\lambda_{\Omega}(y_1,x_2)\\& \lesssim \di_{\Omega}(x_1)
 \end{split}
\end{equation}
Hence there exists $R_7=R_7(c_0,c,n)$ such that if $\Omega$ is $(c,R)$-radially hyperbolic for $R\geq\underset{1\leq i\leq 7}\max \,R_i$, then by  arguments in Lemma \ref{local_dia_implies_len}, we get the desired claim, namely $k_{\Omega}(x_1,x_2)\leq c'(c_0,c,n)$. This completes the proof of Lemma \ref{mainlem_1}.
\end{proof}
\section*{Acknowledgements}
The author wishes to thank his advisor Professor Pekka Koskela for suggesting this problem  and  for many discussions leading to the paper. He also wishes to thank Tapio Rajala for many valuable comments on the paper.
%\section{Density in the $W^{k,p}$-norm}\label{non-homogeneous}

\end{document}